\documentclass[11pt]{amsart}
\usepackage{amsmath,amssymb,euscript,mathrsfs,pstricks}
\usepackage{hyperref}
\usepackage[all]{xy}
\usepackage{enumerate}

\pushQED{\qed}

\psset{unit=1pt}
\psset{arrowsize=4pt 1}
\psset{linewidth=.5pt}


\linespread{1.3}

\hoffset=0in
\voffset=0in
\oddsidemargin=.5in
\evensidemargin=.5in
\topmargin=0.3in
\headsep=0.15in
\headheight=8pt
\textwidth=5.5in
\textheight=8.5in

\newcommand{\define}{\textbf}
\newcommand{\comment}{$\star$ \texttt}


\renewcommand{\phi}{\varphi}

\renewcommand{\tilde}{\widetilde}

\renewcommand{\bar}{\overline}
\renewcommand{\wedge}{\bigwedge}

\newcommand{\C}{\mathbb{C}}

\newcommand{\R}{\mathbb{R}}
\newcommand{\N}{\mathbb{N}}
\newcommand{\Z}{\mathbb{Z}}

\newcommand{\mth}{\mathrm{th}}

\newcommand{\lef}{\left\langle}
\newcommand{\rig}{\right\rangle}


\DeclareMathOperator{\Sym}{Sym}

\DeclareMathOperator{\tr}{tr}

\DeclareMathOperator{\Hom}{Hom}
\DeclareMathOperator{\Spec}{Spec}

\DeclareMathOperator{\pt}{pt}

\DeclareMathOperator{\Ind}{Ind}

\DeclareMathOperator{\Int}{Int}

\DeclareMathOperator{\class}{class}
\DeclareMathOperator{\st}{st}
\DeclareMathOperator{\vol}{vol}

\DeclareMathOperator{\ind}{ind}
\DeclareMathOperator{\BOX}{Box}
\DeclareMathOperator{\Pyr}{Pyr}
\DeclareMathOperator{\GL}{GL}
\DeclareMathOperator{\aff}{aff}
\DeclareMathOperator{\Res}{Res}
\DeclareMathOperator{\conv}{conv}

\newtheorem{theorem}{Theorem}[section]
\newtheorem*{ntheorem}{Theorem}
\newtheorem{lemma}[theorem]{Lemma}

\newtheorem{proposition}[theorem]{Proposition}

\newtheorem{corollary}[theorem]{Corollary}
\newtheorem*{ncor}{Corollary}

\theoremstyle{definition}
\newtheorem{definition}[theorem]{Definition}
\newtheorem{remark}[theorem]{Remark}
\newtheorem{example}[theorem]{Example}

\newtheorem{conjecture}[theorem]{Conjecture}
\newtheorem*{nconj}{Conjecture}

\newcommand{\excise}[1]{}


\begin{document}

\title{Equivariant Ehrhart theory}
\author{Alan Stapledon}
\address{Department of Mathematics\\University of British Columbia\\ BC, Canada V6T 1Z2}
\email{astapldn@math.ubc.ca}

\keywords{}

\date{March 30, 2010}


\begin{abstract}
Motivated by representation theory and geometry, we introduce and develop an equivariant generalization of Ehrhart theory, the study of lattice points in dilations of lattice polytopes. 
We prove representation-theoretic analogues of numerous classical results, and give
applications to the Ehrhart theory of 
rational polytopes and centrally symmetric polytopes. We also recover  a character formula 
of Procesi, Dolgachev, Lunts and Stembridge for the action of a Weyl group on the cohomology of a toric variety associated to a root system. 
\end{abstract}

\maketitle
\tableofcontents

\section{Introduction}

Let $G$ be a finite group acting linearly on a lattice $M'$ of rank $n$, 
and let $P$ be a $d$-dimensional $G$-invariant lattice polytope.
Let $M$ be a translation of the intersection of the affine span of $P$ and $M'$ to the origin, and consider the induced representation $\rho: G \rightarrow GL(M)$ (see Section~\ref{s2}). 
 If $\chi_{mP}$ denotes the permutation character associated to the action of $G$ on the lattice points in the $m^\mth$ dilate of $P$, and $R(G)$ denotes the ring of virtual characters of $G$, then we introduce virtual characters $\{ \phi_i \}_{i \in \N}$ 
determined by the equation
\begin{equation*}
\sum_{m \ge 0} \chi_{mP} t^m = \frac{ \phi[t]}
{(1- t)\det[I - \rho t]} \: \textrm{ in } R(G)[[t]], 
\end{equation*}
where $\phi[t] = \phi[P,G; t] = \sum_{i} \phi_i t^i$.  
These virtual characters naturally appear when one studies the action of a finite group on the cohomology of an invariant hypersurface in a toric variety \cite{YoRepresentations}. 
If we restrict to the action of the trivial group, then $L(m) = \chi_{mP}$ restricts to the \define{Ehrhart polynomial} of $P$, and $\phi[t]$ restricts to the \define{$h^*$-polynomial} of $P$. 
Our goal is  to establish and exploit an equivariant generalization of Ehrhart theory, 
the study of Ehrhart polynomials and $h^*$-polynomials of lattice polytopes (see, for example, \cite{BRComputing}, \cite{HibAlgebraic}, \cite[Chapter 12]{MSCombinatorial}). 

We first consider the permutation characters $\{ \chi_{mP} \}_{m \ge 0}$. For any positive integer $m$, let $\chi^*_{mP}$ denote the permutation character corresponding to the action of $G$ on the interior lattice points $\Int(mP) \cap M'$ in $mP$. The theorem below is due to Ehrhart in the case when $G = 1$ \cite{EhrLinearI}. 


\begin{ntheorem}[Theorem~\ref{t:combinatorics}]
The function $L(m) = \chi_{mP} \in R(G)$ is a  quasi-polynomial in $m$ of degree $d$ and period dividing the  exponent of $G$. Moreover, $L(0)$ is the trivial character, and $(-1)^{d} L(-m) = \chi^*_{mP} \cdot \det (\rho)$ for any positive integer $m$.  
\end{ntheorem}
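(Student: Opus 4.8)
\emph{Proof proposal.} The plan is to prove the statement one conjugacy class at a time, reducing it to the classical (non-equivariant) Ehrhart theory of \emph{rational} polytopes. For $g\in G$ let $V^g=\{x\in M'\otimes\R:gx=x\}$; this is a linear subspace because $G$ acts linearly on $M'$, so the $g$-fixed lattice points of the dilate $mP$ are exactly the lattice points of the dilate $mP^g$ of the rational polytope $P^g:=P\cap V^g$. Hence $L(m)(g)=\chi_{mP}(g)=\#\big(mP^g\cap(M')^g\big)$ with $(M')^g:=M'\cap V^g$, which is the Ehrhart quasi-polynomial of $P^g$ with respect to the lattice $(M')^g$. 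The one geometric fact I would establish at this stage is a bound on the denominator of $P^g$: writing $k=\ord(g)$ and $\pi=\tfrac1k\sum_{j=0}^{k-1}g^j$ for the averaging projector onto $V^g$, $\pi$ retracts $P$ onto $P^g$ (by $G$-invariance and convexity of $P$) and carries $M'$ into $\tfrac1k(M')^g$; and if $v=\sum_i t_i w_i$ is a vertex of $P^g$ written as a convex combination of vertices $w_i$ of $P$, then $v=\pi(v)=\sum_i t_i\pi(w_i)$ with each $\pi(w_i)\in P^g$, so extremality of $v$ forces $v=\pi(w_i)$ for the indices with $t_i>0$. Thus every vertex of $P^g$ lies in $\tfrac1k(M')^g$, i.e.\ $kP^g$ is a lattice polytope, so the denominator of $P^g$ divides $\ord(g)$.

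Granting this, the first two assertions would follow by bookkeeping. By classical rational Ehrhart theory, each $m\mapsto L(m)(g)$ is a quasi-polynomial of degree $\dim P^g$, of period dividing $\ord(g)$, with value $1$ at $m=0$. Since the $\C$-valued quasi-polynomials of period dividing the exponent $e$ of $G$ form a vector space containing every $L(\cdot)(g)$, and $R(G)\otimes\C$ is spanned by the class functions supported on single conjugacy classes, each multiplicity $\langle L(m),\chi\rangle=\tfrac1{|G|}\sum_g L(m)(g)\overline{\chi(g)}$ is again such a quasi-polynomial; hence $L(m)\in R(G)$ is a quasi-polynomial in $m$ of period dividing $e$. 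Its degree is at most $\max_g\dim P^g=\dim P=d$, and exactly $d$ because, for the trivial character, the coefficient of $m^d$ in $\langle L(m),\mathbf{1}\rangle=\tfrac1{|G|}\sum_g L(m)(g)$ picks up $\vol(P)>0$ from every $g\in\ker\rho$ (for which $P^g=P$, so $L(m)(g)$ is the ordinary Ehrhart polynomial of $P$), with no cancellation. Evaluating at $m=0$ gives $L(0)(g)=1$ for every $g$, so $L(0)$ is the trivial character.

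For the reciprocity statement, the point I would have to nail down is the identity $\Int(mP)\cap V^g=\operatorname{relint}(mP^g)$. First, $P^g$ is full-dimensional in $\operatorname{aff}(P^g)=\operatorname{aff}(P)\cap V^g$, since $\pi$ maps $\Int(P)$ into the nonempty relatively open set $\Int(P)\cap V^g$; in particular $\operatorname{relint}(mP^g)\cap\Int(mP)\neq\emptyset$. The inclusion $\Int(mP)\cap V^g\subseteq\operatorname{relint}(mP^g)$ is immediate, and the reverse inclusion is a short convexity argument: a point $x\in\operatorname{relint}(mP^g)$ lying on a facet $\{\langle a,\cdot\rangle=c\}$ of $mP$ could, using a point $x_0\in\operatorname{relint}(mP^g)\cap\Int(mP)$, be nudged to $x+\varepsilon(x-x_0)\in mP^g$ with $\langle a,x+\varepsilon(x-x_0)\rangle>c$, contradicting $mP^g\subseteq mP$. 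Then $\chi^*_{mP}(g)=\#\big(\operatorname{relint}(mP^g)\cap(M')^g\big)$, so Ehrhart--Macdonald reciprocity for $P^g$ gives $\chi^*_{mP}(g)=(-1)^{\dim P^g}L(-m)(g)$ for positive integers $m$, hence $(-1)^dL(-m)(g)=(-1)^{d-\dim P^g}\chi^*_{mP}(g)$. Finally I would check $(-1)^{d-\dim P^g}=\det(\rho(g))$: since $\rho(g)$ is a real matrix of finite order, $\det(\rho(g))=(-1)^a$ where $a$ is the multiplicity of the eigenvalue $-1$, while $d-\dim P^g$ equals the codimension in $M\otimes\R$ of the fixed subspace of $\rho(g)$, which is $a$ plus the number of non-real eigenvalues of $\rho(g)$ --- an even number. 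As this holds for every $g\in G$, it yields $(-1)^dL(-m)=\chi^*_{mP}\cdot\det(\rho)$ in $R(G)$.

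The hard part will be the two geometric facts about the slice $P^g$ --- that $\ord(g)\cdot P^g$ is a lattice polytope, and that $\Int(mP)\cap V^g$ is precisely the relative interior of $mP^g$ (a priori the linear slice $V^g$ could be tangent to $\partial(mP)$). Both can be handled with the single tool of the averaging projector $\pi=\tfrac1k\sum_j g^j$, which realizes $P^g$ as a rational average of $P$ and furnishes $g$-fixed points in the interior of $P$. Everything else is linear algebra in $R(G)$ together with a class-by-class appeal to ordinary rational Ehrhart theory.
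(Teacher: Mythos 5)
Your proposal is correct and follows essentially the same route as the paper: evaluate the character at each $g \in G$, identify $\chi_{mP}(g)$ with the Ehrhart count of the rational slice $P_g = P \cap V^g$, bound its denominator by $\ord(g)$ via orbit-averages of vertices, invoke classical rational Ehrhart theory and reciprocity, and identify $(-1)^{d-\dim P_g}$ with $\det(\rho(g))$ by an eigenvalue count (these are the paper's Lemmas~\ref{l:character}, \ref{l:vertices}, \ref{l:dimension} and \ref{l:determinant}). The only divergences are cosmetic — you prove the vertex statement by extremality rather than by rewriting a general point as a convex combination of orbit-averages, and you spell out the identity $\Int(mP)\cap V^g=\operatorname{relint}(mP_g)$ that the paper leaves implicit.
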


As an application, for any positive integer $m$, let $f_{P/G}(m)$ (respectively $f_{P/G}^\circ(m)$) denote the number of $G$-orbits of $mP \cap M'$ (respectively $\Int(mP) \cap M'$). Similarly, let $\tilde{f}_{P/G}(m)$ (respectively $\tilde{f}_{P/G}^\circ(m)$) denote the number of $G$-orbits of $mP \cap M'$ (respectively $\Int(mP) \cap M'$) whose isotropy subgroup is contained in $\{ g \in G \mid \det(\rho(g)) = 1 \}$. 
By computing multiplicities of the trivial character and $\det(\rho)$ in $\chi_{mP}$, we deduce the following corollary. 

\begin{ncor}[Corollary~\ref{c:orbits}]
With the notation above, $f_{P/G}(m)$ and $\tilde{f}_{P/G}(m)$ are  quasi-polynomials in $m$ of degree $d$, with leading coefficient $\frac{\vol P}{|G|}$ and period dividing the  exponent of $G$. Moreover, $f_{P/G}(0) = \tilde{f}_{P/G}(0) = 1$, and 
$(-1)^{d} f_{P/G}(-m) =  \tilde{f}_{P/G}^\circ(m)$ and $(-1)^{d} \tilde{f}_{P/G}(-m) =  f^\circ_{P/G}(m)$ for any positive integer $m$. 
\end{ncor}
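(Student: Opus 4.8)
The plan is to rewrite each of the two orbit counts as a pairing of the permutation character $\chi_{mP}$ against a fixed virtual character and then quote Theorem~\ref{t:combinatorics}. The key input is the linear-character refinement of Burnside's lemma: for a finite $G$-set $S$ with permutation character $\chi_S$ and a linear character $\psi\colon G\to\C^\times$,
\begin{equation*}
\lef\chi_S,\psi\rig=\frac{1}{|G|}\sum_{g\in G}\overline{\psi(g)}\,|S^g|=\#\bigl\{\,G\text{-orbits }Gx\subseteq S\mid \psi|_{G_x}\equiv 1\,\bigr\},
\end{equation*}
where $G_x$ is the isotropy subgroup of $x$; this follows by collecting the double sum orbit by orbit and using that $\sum_{g\in G_x}\overline{\psi(g)}$ equals $|G_x|$ if $\psi|_{G_x}$ is trivial and $0$ otherwise. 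Since $\{g\in G\mid\det(\rho(g))=1\}$ is exactly $\ker(\det\rho)$, applying this with $S=mP\cap M'$ and $\psi\in\{\mathbf{1}_G,\det\rho\}$ gives
\begin{align*}
f_{P/G}(m)&=\lef\chi_{mP},\mathbf{1}_G\rig=\lef L(m),\mathbf{1}_G\rig,\\
\tilde{f}_{P/G}(m)&=\lef\chi_{mP},\det\rho\rig=\lef L(m),\det\rho\rig,
\end{align*}
and, replacing $S$ by $\Int(mP)\cap M'$, likewise $f^\circ_{P/G}(m)=\lef\chi^*_{mP},\mathbf{1}_G\rig$ and $\tilde{f}^\circ_{P/G}(m)=\lef\chi^*_{mP},\det\rho\rig$, for every positive integer $m$.

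Since $\psi\mapsto\lef\,\cdot\,,\psi\rig$ is a $\Z$-linear functional on $R(G)$, Theorem~\ref{t:combinatorics} at once implies that $f_{P/G}(m)$ and $\tilde{f}_{P/G}(m)$ are quasi-polynomials in $m$ of degree at most $d$ and period dividing the exponent of $G$. The value $f_{P/G}(0)=\lef L(0),\mathbf{1}_G\rig=\lef\mathbf{1}_G,\mathbf{1}_G\rig=1$ is read off from $L(0)=\mathbf{1}_G$, and $\tilde{f}_{P/G}(0)$ is obtained the same way. For the reciprocity identities, substitute $(-1)^d L(-m)=\chi^*_{mP}\cdot\det\rho$ and use $\det\rho(g)\in\{\pm1\}$, so that $\overline{\det\rho}=\det\rho$ and $(\det\rho)^2=\mathbf{1}_G$:
\begin{equation*}
f_{P/G}(-m)=(-1)^d\lef\chi^*_{mP}\cdot\det\rho,\mathbf{1}_G\rig=(-1)^d\lef\chi^*_{mP},\det\rho\rig=(-1)^d\tilde{f}^\circ_{P/G}(m),
\end{equation*}
and symmetrically $\tilde{f}_{P/G}(-m)=(-1)^d\lef\chi^*_{mP},\mathbf{1}_G\rig=(-1)^d f^\circ_{P/G}(m)$.

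It remains to identify the leading term, which then forces the degree to be exactly $d$. Writing $\lef L(m),\psi\rig=\frac{1}{|G|}\sum_{g\in G}\overline{\psi(g)}\,|(mP\cap M')^g|$, the term for $g\ne e$ counts lattice points of $M'$ lying in $mP^g$, where $P^g=P\cap(M'\otimes\R)^{\rho(g)}$; since $\rho$ is faithful this fixed subspace has dimension at most $d-1$, so $mP^g$ is a bounded region in an affine space of dimension $\le d-1$ and that term is $O(m^{d-1})$. The term for $g=e$ is $\tfrac{\dim\psi}{|G|}\,|mP\cap M'|=\tfrac{\dim\psi}{|G|}\bigl(\vol P\cdot m^d+O(m^{d-1})\bigr)$ by the classical ($G=1$) Ehrhart theorem. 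Hence $\lef L(m),\psi\rig=\tfrac{\dim\psi}{|G|}\,\vol P\cdot m^d+O(m^{d-1})$, and since $\mathbf{1}_G$ and $\det\rho$ are both one-dimensional this identifies the leading coefficient of each of $f_{P/G}$ and $\tilde{f}_{P/G}$ as $\vol P/|G|\ne 0$; in particular the degree is $d$.

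The argument is essentially formal once the dictionary $f_{P/G}(m)=\lef L(m),\mathbf{1}_G\rig$ and $\tilde{f}_{P/G}(m)=\lef L(m),\det\rho\rig$ is in place; the only step carrying real content beyond Theorem~\ref{t:combinatorics} is the last paragraph, namely showing that each non-identity $g\in G$ contributes only lower-order terms. That is precisely where faithfulness of $\rho$ enters, together with the (quasi-)polynomial growth of the lattice-point count on the lower-dimensional polytopes $P^g$.
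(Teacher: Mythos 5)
Your proof is correct and follows essentially the same route as the paper: both express $f_{P/G}(m)$ and $\tilde f_{P/G}(m)$ as the pairings $\lef L(m),1\rig$ and $\lef L(m),\det\rho\rig$ (via Lemma~\ref{l:permutation}) and then apply Theorem~\ref{t:combinatorics}, using $\lef\chi^*_{mP}\cdot\det\rho,1\rig=\lef\chi^*_{mP},\det\rho\rig$ for the reciprocity statements. The only cosmetic difference is that the paper gets the leading coefficient by citing Corollary~\ref{c:leading} (i.e.\ $L_d=\frac{\vol P}{|G|}\chi_{\st}$), whereas you prove the same fact inline by observing that only $g=1$ contributes a degree-$d$ term.
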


For example, if $G = \Sym_{n}$ acts on $\Z^n$ by permuting coordinates, and $P$ is the standard simplex with vertices  
$\{ e_1, \ldots, e_n \}$, then  $f_{P/G}(m)$ equals the number of partitions of $m$ with at most $n$ parts, and $\tilde{f}^\circ_{P/G}(m)$ equals the number of partitions of $m$ with $n$ distinct parts. In this case, 
the reciprocity result above is a classical result on partitions \cite[Theorem~4.5.7]{StaEnumerative}.



With the notation of the above theorem, we may write 
\[
L(m) = L_d(m) m^d + L_{d - 1}(m) m^{d - 1} + \cdots + L_0(m),
\]
where $L_i(m) \in R(G)$ is a periodic function in $m$. 
We prove that the leading coefficient equals $L_d = L_d(m) = \frac{\vol P}{|G|} \chi_{\st}$, where 
$\chi_{\st}$ is the character associated to the \emph{standard representation} of $G$ (Corollary~\ref{c:leading}). The latter fact may also be deduced from the work of Howe in \cite{HowAsymptotics} (Remark~\ref{r:howe}). 
We give a complete description of 
 the second leading coefficient $L_{d - 1}(m)$, and show that its period divides $2$ (Corollary~\ref{c:second}). 
 Moreover, if $\phi[t]$ is a polynomial, then we prove that $L_{d - 1} = L_{d - 1}(m)$ is independent of $m$ (Remark~\ref{r:second}).

We next consider the power series $\phi[t]$. 
 We show that $\phi[t](g)$ is a rational function in $t$ that is regular at $t = 1$ (Lemma~\ref{l:rational}), and give a complete description of the corresponding rational class function 
 $\phi[1]$ (Proposition~\ref{p:rational}). 
The \define{degree} $s$ of $P$ is the degree of $h^*(t)$, and the \define{codegree} of $P$ equals 
$l = d + 1 - s$. 
 
 \begin{ncor}[Corollary~\ref{c:reciprocity}]
With the notation above, 
\begin{equation*}
\sum_{m \ge 1} \chi^*_{mP} t^m = \frac{ t^{d + 1}\phi[t^{-1}]}
{(1- t)\det[I - \rho t]}.
\end{equation*}
In particular, if $\phi[t]$ is a polynomial, then $\phi[t]$ has degree $s$ and $\phi_s = \chi^*_{lP}$. 
\end{ncor}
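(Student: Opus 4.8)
The plan is to reduce the corollary to a statement about ordinary rational generating functions by evaluating all the class functions involved at a fixed $g\in G$, and then to feed Theorem~\ref{t:combinatorics} into the classical reciprocity theorem for rational generating functions of quasi-polynomials (so that the corollary becomes the equivariant analogue of Ehrhart--Macdonald reciprocity). Fix $g\in G$. By Theorem~\ref{t:combinatorics} the function $m\mapsto L(m)(g)=\chi_{mP}(g)$ is a quasi-polynomial, so the rational function
\[
F_g(t)\;:=\;\sum_{m\ge 0}\chi_{mP}(g)\,t^m\;=\;\frac{\phi[t](g)}{(1-t)\det(I-\rho(g)t)}
\]
and the power series $\sum_{m\ge 1}L(-m)(g)\,t^m$ satisfy $\sum_{m\ge 1}L(-m)(g)\,t^m=-F_g(t^{-1})$ as rational functions of $t$, by the standard reciprocity for generating functions of quasi-polynomials (\cite[Chapter~4]{StaEnumerative}). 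On the other hand, the reciprocity clause of Theorem~\ref{t:combinatorics} says $\chi^*_{mP}(g)=(-1)^{d}\det\rho(g)^{-1}L(-m)(g)$ for $m\ge 1$, so
\[
\sum_{m\ge 1}\chi^*_{mP}(g)\,t^m\;=\;(-1)^{d+1}\det\rho(g)^{-1}\,F_g(t^{-1}).
\]

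It then remains to simplify $F_g(t^{-1})$, and this is where the integrality of $\rho$ enters. Let $\zeta_1,\dots,\zeta_d$ be the eigenvalues of $\rho(g)$; they are roots of unity since $g$ has finite order. One has $1-t^{-1}=-(1-t)/t$ and
\[
\det(I-\rho(g)t^{-1})\;=\;t^{-d}\prod_{j}(t-\zeta_j)\;=\;(-t)^{-d}\,\det\rho(g)\,\det(I-\rho(g)^{-1}t).
\]
Since $\rho(g)\in\GL_d(\Z)$ we have $\det\rho(g)=\pm 1$; and since the characteristic polynomial of $\rho(g)$ has real coefficients, its roots are stable under complex conjugation, so $\{\zeta_j^{-1}\}=\{\overline{\zeta_j}\}=\{\zeta_j\}$ as multisets and $\det(I-\rho(g)^{-1}t)=\det(I-\rho(g)t)$. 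Substituting all of this back, the two factors $(-1)^{d+1}$ cancel, the stray scalars $\det\rho(g)^{-1}$ and $\det\rho(g)$ combine to $\det\rho(g)^{-2}=1$, and one is left with
\[
\sum_{m\ge 1}\chi^*_{mP}(g)\,t^m\;=\;\frac{t^{d+1}\phi[t^{-1}](g)}{(1-t)\det(I-\rho(g)t)}.
\]
I expect this last bookkeeping step --- keeping the determinant factors and the $t\mapsto t^{-1}$ reversal of $\det(I-\rho t)$ straight --- to be the only delicate point; everything else is formal. Since the displayed identity holds after evaluating at every $g\in G$, it holds in $R(G)[[t]]$, which is the first assertion.

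For the addendum, suppose $\phi[t]=\sum_{i=0}^{S}\phi_i t^i$ is a polynomial with $\phi_S\neq 0$. Then $t^{d+1}\phi[t^{-1}]=\sum_{i=0}^{S}\phi_i\,t^{\,d+1-i}$ has lowest-degree term $\phi_S\,t^{\,d+1-S}$; since $1/\bigl((1-t)\det(I-\rho t)\bigr)$ has constant term the trivial character, the right-hand side of the reciprocity formula, expanded about $t=0$, again has lowest-degree term $\phi_S\,t^{\,d+1-S}$. On the left-hand side, $\chi^*_{mP}$ is the permutation character on $\Int(mP)\cap M'$, hence vanishes exactly when this set is empty; applying the already-established $G=1$ case of the formula (which is classical Ehrhart reciprocity) together with $\deg h^*(t)=s$ shows $\chi^*_{mP}(1)=L^*(m)=0$ for $m<l$ while $L^*(l)=h^*_s\neq 0$, so the least $m$ with $\chi^*_{mP}\neq 0$ is $l=d+1-s$. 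Matching lowest-degree terms on the two sides of the reciprocity formula then forces $d+1-S=l$, i.e.\ $\deg\phi[t]=S=s$, and reading off that coefficient gives $\phi_s=\phi_S=\chi^*_{lP}$.
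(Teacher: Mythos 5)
Your proof is correct and follows essentially the same route as the paper: evaluate at each $g\in G$, apply Stanley's reciprocity for quasi-polynomial generating functions (Proposition~4.2.3 of \cite{StaEnumerative}), and clean up the $t\mapsto t^{-1}$ substitution using $\det(I-\rho(g)^{-1}t)=\det(I-\rho(g)t)$ and $\det\rho(g)=\pm 1$. The only cosmetic difference is that you invoke the reciprocity clause of Theorem~\ref{t:combinatorics} directly, whereas the paper re-derives that relation from Lemma~\ref{l:character}, classical Ehrhart reciprocity and Lemma~\ref{l:determinant}; these are the same ingredients, and your lowest-degree-term argument for the addendum matches what the paper leaves as ``immediate.''
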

We deduce that $\phi[t] = t^s\phi[t^{-1}]$ if and only if $lP$ is a translate of a reflexive polytope (Corollary~\ref{c:reflexive}), generalizing a result of Stanley in the case when $G = 1$ \cite[Theorem~4.4]{StaHilbert2}. 
 We also describe the behavior of $\{ \chi_{mP} \}_{m \ge 0}$ and $\phi[t]$ under the operations of \emph{direct product} and \emph{direct sum}, and prove an equivariant generalization of a theorem of 
 Braun \cite[Theorem~1]{BraEhrhart}. More specifically, 
we prove  that  if $P$ is a $G$-invariant reflexive polytope and $Q$ is an $H$-invariant lattice polytope containing the origin in its interior, then $\phi_{P \oplus Q}[t] = \phi_P[t] \cdot \phi_Q[t]$ (Proposition~\ref{p:Braun}).

 We next consider the delicate question of  when $\phi[t]$ is a polynomial. 
 In Section~\ref{s:effectiveness}, we provide distinct criterion that guarantee either that $\phi[t]$ is a polynomial (Lemma~\ref{l:polynomial}), or is not a polynomial (Lemma~\ref{l:bad}). We say that 
 $\phi[t]$ is \define{effective} if each virtual character $\phi_i$ is a character. Clearly, if $\phi[t]$ is effective, then $\phi[t]$ is a polynomial. We prove that $\phi_1$ is a character (Corollary~\ref{c:basic}), 
 and if  $P$ is a \define{simplex} (i.e. $P$ has $d + 1$ vertices),  then we show that the $\phi_i$ are explicit permutation representations (Proposition~\ref{p:simplex}).
 
 We offer the following conjecture. If $Y$ denotes the toric variety corresponding to $P$ with corresponding ample, torus-invariant line bundle 
$L$, then one may ask whether $(Y,L)$ admits a $G$-invariant hypersurface that is \define{non-degenerate} in the sense of Khovanski{\u\i} \cite{KhoNewton}. We refer the reader to Section~\ref{s:effectiveness} for details.

\begin{nconj}[Conjecture~\ref{c:big}]
With the notation above, the following conditions are equivalent
\begin{itemize}
\item $(Y, L)$ admits a $G$-invariant non-degenerate hypersurface, 
\item $\phi[t]$ is effective,
\item  $\phi[t]$ is a polynomial. 
\end{itemize}
\end{nconj}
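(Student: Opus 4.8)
We sketch an approach toward the above conjecture. The natural plan is to prove the cycle of implications $(\mathrm{i}) \Rightarrow (\mathrm{ii}) \Rightarrow (\mathrm{iii}) \Rightarrow (\mathrm{i})$, where $(\mathrm{i})$, $(\mathrm{ii})$, $(\mathrm{iii})$ denote the three bulleted conditions in the order listed. The implication $(\mathrm{ii}) \Rightarrow (\mathrm{iii})$ is the elementary one already noted in the text: evaluating at $g = 1$ gives $\sum_i \phi_i(1) t^i = h^*(t)$, an honest polynomial of degree at most $d$, and if each $\phi_i$ is a genuine character then $\phi_i(1) \ge 0$, so a sum of nonnegative monomials equal to a polynomial forces $\phi_i(1) = 0$, hence $\phi_i = 0$, for all large $i$.

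For $(\mathrm{i}) \Rightarrow (\mathrm{ii})$, the idea is to realize the $\phi_i$ geometrically as genuine $G$-modules, using the equivariant refinement of the Danilov--Khovanski{\u\i} computation of the mixed Hodge structure on the cohomology of a non-degenerate toric hypersurface; this is precisely the setting studied in \cite{YoRepresentations}. Given a $G$-invariant non-degenerate hypersurface $X \subset Y$ cut out by a section of $L$, that computation expresses the relevant graded pieces of $H^*(X)$ --- the part not accounted for by the ambient toric variety $Y$ --- in terms of the coefficients of $\phi[t]$; since these are honest cohomology groups carrying a $G$-action, one reads off that each $\phi_i$, after the twist by $\det\rho$ recorded in Corollary~\ref{c:reciprocity}, is a genuine character, i.e.\ $\phi[t]$ is effective. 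Concretely one can work with the section ring $R_P = \bigoplus_{m \ge 0} H^0(Y, L^{\otimes m})$, whose equivariant Hilbert series is $\sum_{m \ge 0}\chi_{mP}t^m$, the factor $(1-t)\det[I-\rho t]$ being the equivariant Hilbert series of a $G$-stable polynomial subalgebra; a $G$-invariant non-degenerate section then furnishes, together with the $G$-semiinvariant toric coordinates, a regular sequence realizing $\phi[t]$ as the equivariant Hilbert series of an honest graded $G$-module. Tracking the signs coming from the weight and Hodge filtrations is the only subtlety here, and it is a bookkeeping matter rather than a genuine obstacle.

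The substance of the conjecture is the converse $(\mathrm{iii}) \Rightarrow (\mathrm{i})$, and I expect this to be the main obstacle. When $G$ is trivial it is immediate --- $\phi[t] = h^*(t)$ is always a polynomial, and a generic section of $L$ is non-degenerate by Khovanski{\u\i} \cite{KhoNewton} --- so the entire content lies in the interaction between the $G$-action and transversality. The space $H^0(Y,L)^G$ is spanned by the $G$-orbit sums of the lattice points of $P$, and the natural attempt is a $G$-equivariant Bertini argument showing that a generic such section meets every torus orbit of $Y$ transversally. The obstruction is that this linear system can be too small: on a low-dimensional torus orbit every $G$-invariant section may be forced to be singular, or to vanish identically, and there is no a priori reason for generic transversality to survive the symmetry constraint. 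The heart of a proof must therefore be a dictionary translating such a bad orbit into a failure of effectivity of some $\phi_i$ --- presumably by extracting from the non-smoothable locus an explicit virtual sub-character of some $\phi_i$ with negative multiplicity, in the spirit of the obstruction underlying Lemma~\ref{l:bad} --- and, conversely, building a $G$-invariant non-degenerate section inductively over the orbit stratification with the obstruction at each stratum governed by the corresponding coefficient of $\phi[t]$. Establishing this correspondence in general, beyond the cases already under control (the character $\phi_1$ in Corollary~\ref{c:basic}, and $P$ a simplex in Proposition~\ref{p:simplex}), is the crux, and is where I would expect the argument to stall.
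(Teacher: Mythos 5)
The statement you are addressing is Conjecture~\ref{c:big}; the paper offers no proof of it, so there is nothing to measure your argument against except the partial results the paper does establish. Your accounting of which implications are known matches the paper exactly. The implication from the existence of a $G$-invariant non-degenerate hypersurface to effectivity is Theorem~\ref{t:nondeg}, proved in \cite{YoRepresentations} by realizing $\phi_{i+1}\cdot\det(\rho)$ as the character of the $G$-action on the $i^\mth$ graded piece of the Hodge filtration on the primitive part of the middle cohomology (with compact support) of the hypersurface --- this is the mixed-Hodge-theoretic route you describe; your alternative section-ring/regular-sequence heuristic is plausible in spirit but is not the mechanism the paper relies on, and you would still have to justify that the relevant toric coordinates together with the invariant section form a regular sequence. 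Your argument that effectivity implies polynomiality (evaluate at $g=1$ to get $h^*(t)$, use $\phi_i(1)\ge 0$ and the fact that a character of dimension zero is the zero character) is correct and is the same elementary observation the paper makes.

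The genuine gap is the one you name yourself: the implication from $\phi[t]$ being a polynomial back to either of the other two conditions is not established, and your proposal contains no mechanism for it beyond a description of what such a mechanism would have to accomplish. This is precisely why the statement appears in the paper as a conjecture rather than a theorem. The paper records only that the equivalence of the last two conditions holds in dimension $2$ (via Corollary~\ref{c:basic}, since there $\phi_0=1$, $\phi_1$ is effective, and $\phi_2=\phi_d=\chi^*_P$ is a permutation character), together with the special cases you list ($P$ a simplex via Proposition~\ref{p:simplex}, the criteria of Corollary~\ref{c:fixed}, and so on). So your proposal is an accurate map of the known territory and correctly locates the open frontier, but it does not prove the statement, and no proof of it exists in the paper.
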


The fact that the first condition implies the second condition is Theorem~\ref{t:nondeg}, and is proved in \cite{YoRepresentations} by realizing $\phi_{i + 1} \cdot \det(\rho)$ as the character associated to the action of $G$ on the $i^\mth$ graded piece of the
Hodge filtration on the primitive part of the middle cohomology (with compact support) of a $G$-invariant non-degenerate hypersurface.  In fact, this result provided the initial motivation for this project. 
The following corollary can be deduced from this result using Bertini's theorem 
\cite[Corollary~10.9]{HarAlgebraic}.
 
 \begin{ncor}(Theorem~\ref{t:nondeg})
Let 
$\Gamma(Y, L)^G \subseteq \Gamma(Y, L)$ denote the sub-linear system of $G$-invariant global sections of $L$. If $\Gamma(Y, L)^G$ is base point free, then $\phi[t]$ is effective. 
\end{ncor}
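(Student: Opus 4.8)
The plan is to invoke Theorem~\ref{t:nondeg}: since that result says a $G$-invariant non-degenerate hypersurface forces $\phi[t]$ to be effective, it suffices to produce such a hypersurface under the hypothesis that $\Gamma(Y,L)^G$ is base point free. Recall that $Y$ is the disjoint union of its torus orbits $\mathcal{O}_\tau$, indexed by the faces $\tau$ of $P$, and that $G$ acts on this stratification compatibly with its action on the face poset. Unwinding Khovanski{\u\i}'s definition through the orbit--cone correspondence, a section of $L$ cuts out a non-degenerate hypersurface exactly when, for every face $\tau$, its restriction to $\mathcal{O}_\tau$ is a nonzero section of $L|_{\mathcal{O}_\tau}$ whose zero scheme is smooth of codimension one in $\mathcal{O}_\tau$; on the big torus this is the usual condition that each Newton-polytope truncation $f_\tau$ of the defining Laurent polynomial $f$ has no critical zeros in the torus. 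Since the zero locus of a $G$-invariant section is automatically $G$-invariant, the task reduces to finding a section with the above transversality property inside $\Gamma(Y,L)^G$.

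The key point is that Bertini's theorem can be applied one stratum at a time. Fix a face $\tau$. The orbit $\mathcal{O}_\tau$ is a torus, hence smooth over $\C$, and since $\Gamma(Y,L)^G$ has no base points on $Y\supseteq \mathcal{O}_\tau$, the restricted linear system $W_\tau := \{\, s|_{\mathcal{O}_\tau} : s \in \Gamma(Y,L)^G \,\}$ has no base points on $\mathcal{O}_\tau$; it is moreover nonzero, as some $G$-invariant section is non-vanishing at a point of the nonempty set $\mathcal{O}_\tau$ and hence does not restrict to zero on the irreducible variety $\mathcal{O}_\tau$. By Bertini's theorem \cite[Corollary~10.9]{HarAlgebraic}, a general member of $W_\tau$ is a nonzero section whose zero scheme is smooth of codimension one in $\mathcal{O}_\tau$. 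Because the restriction map $\Gamma(Y,L)^G \to W_\tau$ is linear, the set of $s \in \Gamma(Y,L)^G$ restricting to such a member is a nonempty Zariski-open subset of $\Gamma(Y,L)^G$. There are only finitely many faces $\tau$, so the intersection of these open subsets over all $\tau$ is still nonempty, and any $s$ lying in it defines a $G$-invariant non-degenerate hypersurface. Theorem~\ref{t:nondeg} then yields that $\phi[t]$ is effective.

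The argument is essentially bookkeeping, and I expect the only points that genuinely require care are: (i) matching the definition of non-degeneracy used in \cite{YoRepresentations} with the stratum-wise transversality condition above, via the orbit--cone correspondence identifying $s|_{\mathcal{O}_\tau}$ with the truncation $f_\tau$; and (ii) the observation that one must run Bertini on each smooth stratum $\mathcal{O}_\tau$ rather than on $Y$, which is typically singular. The second is harmless precisely because non-degeneracy is itself checked stratum by stratum, so there is no real obstacle beyond correctly assembling these local applications of Bertini into a single generic $G$-invariant section.
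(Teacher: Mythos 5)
Your proposal is correct and follows essentially the same route as the paper: the paper likewise reduces to producing a $G$-invariant non-degenerate hypersurface and applies Bertini (citing Corollary~10.9 and Remark~10.9.2 of \cite{HarAlgebraic}) stratum by stratum, phrasing the stratum-wise base-point-freeness via the truncated linear systems \eqref{e:bpf} on the big torus, which correspond to your restricted systems $W_\tau$ under the identification \eqref{e:nondeg}. The only cosmetic difference is that you run Bertini on each orbit $\mathcal{O}_\tau$ directly rather than on the truncations $f_Q$ in $T$; these are equivalent by that same identification.
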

 

One easily deduces the following useful combinatorial criterion for effectiveness. 

\begin{ncor}[Corollary~\ref{c:fixed}]
If every  face $Q$ of $P$ with $\dim Q > 1$ contains a lattice point 
that is $G_Q$-fixed, where $G_Q$ denotes the stabilizer of $Q$,  then 
$\phi[t]$ is effective. 
\end{ncor}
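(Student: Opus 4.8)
The plan is to deduce Corollary~\ref{c:fixed} from the preceding corollary (Theorem~\ref{t:nondeg}): it suffices to show that under the stated hypothesis the sub-linear system $\Gamma(Y,L)^G$ of $G$-invariant sections is base point free on the toric variety $Y$ associated to $P$. The global sections of $L$ have a canonical basis indexed by $P \cap M'$, so a $G$-invariant section is a $G$-invariant linear combination of these lattice-point monomials, equivalently an assignment of equal coefficients along each $G$-orbit in $P \cap M'$. Base-point-freeness on a toric variety is checked torus-orbit by torus-orbit: the torus orbits of $Y$ correspond to faces $Q$ of $P$, and $\Gamma(Y,L)^G$ has a base point on the orbit $O_Q$ precisely when every $G$-invariant section vanishes on all of $O_Q$. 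Since $L$ itself is base point free, some section does not vanish on $O_Q$; the question is whether a $G$-invariant one can be arranged.

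First I would reduce to understanding, for a fixed face $Q$, the restriction of sections to $O_Q$. Restricting the monomial basis to $O_Q$ kills the monomials coming from $(P\cap M')\setminus Q$ and, up to the torus action, the restrictions of the monomials from $Q \cap M'$ span $\Gamma(O_Q, L|_{O_Q})$; concretely $\Gamma(\overline{O_Q}, L|_{\overline{O_Q}})$ is the analogous section space for the polytope $Q$. So a $G$-invariant section of $L$ restricts to a $G_Q$-invariant section of $L|_{\overline{O_Q}}$, where $G_Q$ is the stabilizer of $Q$ (the subgroup of $G$ that fixes the face, hence acts on $\overline{O_Q}$), and conversely — this is the key point — a $G_Q$-invariant section on $\overline{O_Q}$ extends to a $G$-invariant section on $Y$ by summing over a transversal of $G_Q$ in $G$ (the faces $gQ$ for $g$ ranging over coset representatives are either equal to $Q$ or contribute monomials vanishing on $O_Q$, so this induced section restricts back to a $G$-dependent multiple of the original; one checks the multiple is nonzero). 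Thus base-point-freeness of $\Gamma(Y,L)^G$ along $O_Q$ is equivalent to base-point-freeness of $\Gamma(\overline{O_Q}, L|_{\overline{O_Q}})^{G_Q}$ at the distinguished point of $O_Q$, i.e. to the existence of a $G_Q$-invariant section of $L|_{\overline{O_Q}}$ not vanishing there — equivalently, a $G_Q$-invariant, $G_Q$-orbit-constant function on $Q\cap M'$ that is nonzero, which exists iff $Q \cap M'$ contains a $G_Q$-fixed lattice point. (For $\dim Q \le 1$ one checks the hypothesis is automatic: $Q$ a vertex is its own fixed point, and for an edge $G_Q$ either fixes both endpoints or swaps them and fixes the midpoint region — more carefully, an edge always has a $G_Q$-invariant lattice point, namely a vertex if $G_Q$ fixes the edge pointwise, or else both endpoints are swapped and then the uniform section $\sum_{v\in Q\cap M'} x^v$ is $G_Q$-invariant and nonvanishing, so the base-point condition still holds; in any case the $\dim Q > 1$ restriction in the hypothesis loses nothing.)

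Assembling these: if every face $Q$ with $\dim Q > 1$ contains a $G_Q$-fixed lattice point, then for every face $Q$ there is a $G_Q$-invariant section of $L|_{\overline{O_Q}}$ nonvanishing on $O_Q$, hence a $G$-invariant section of $L$ nonvanishing on $O_Q$; as the $O_Q$ cover $Y$, the system $\Gamma(Y,L)^G$ is base point free, and Theorem~\ref{t:nondeg} gives that $\phi[t]$ is effective.

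The main obstacle I anticipate is the second step — the precise claim that a $G_Q$-invariant section on the closed torus orbit $\overline{O_Q}$ extends to a genuinely $G$-invariant section on $Y$ whose restriction to $O_Q$ is still nonzero. This requires being careful about how $G$ permutes the faces in the $G$-orbit of $Q$ and how the corresponding monomials restrict to $O_Q$: monomials $x^u$ with $u \in gQ \cap M'$, $gQ \ne Q$, may or may not vanish on $O_Q$ depending on whether $u$ also lies on $Q$, so the averaging argument must track these overlaps and verify the induced $G$-invariant section does not accidentally vanish on $O_Q$. Once that bookkeeping is done the rest is the formal cover-by-torus-orbits argument together with the already-established Theorem~\ref{t:nondeg}.
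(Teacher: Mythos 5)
Your overall strategy (deduce the corollary from Theorem~\ref{t:nondeg} by analyzing the invariant linear system face by face) is the paper's, but two steps in your execution are genuinely wrong. First, the local reduction: base point freeness of $\Gamma(Y,L)^G$ along the torus orbit $O_Q$ is \emph{not} equivalent to non-vanishing of some invariant section at the distinguished point of $O_Q$, because the subspace of $G$-invariant sections is not stable under the torus action; you must rule out common zeros at \emph{every} point of the orbit. Moreover your claimed equivalence ``a nonzero $G_Q$-orbit-constant function on $Q\cap M'$ exists iff $Q\cap M'$ contains a $G_Q$-fixed lattice point'' is false in both directions: the all-ones assignment is always a nonzero orbit-constant function. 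The step that actually makes the hypothesis do work is absent from your argument: a $G_Q$-fixed lattice point $u_0\in Q$ puts the single monomial $\chi^{u_0}$ into the invariant system of Remark~\ref{r:bpf}, and a monomial is nowhere vanishing on a torus, so the system restricted to that face has no base points anywhere on $O_Q$.

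Second, and more seriously, the statement you are trying to prove --- that $\Gamma(Y,L)^G$ is base point free under the hypothesis --- is false in general. The hypothesis says nothing about edges, and on an edge $Q$ of odd lattice length $s$ whose endpoints are swapped by $G_Q$, the invariant system on $O_Q\cong\C^*$ is spanned by the orbit sums $t^i+t^{s-i}=t^i(1+t^{s-2i})$, all of which vanish at $t=-1$; so the invariant system has a base point there. This is exactly why the paper does not argue via base point freeness of $\Gamma(Y,L)^G$, but instead directly produces a $G$-invariant non-degenerate hypersurface: for edges one only needs a generic invariant section to cut out a \emph{smooth} (reduced, zero-dimensional) subscheme of the edge torus, which follows from $\sum_{i=0}^s t^i$ having distinct roots, and this is strictly weaker than freeness from base points. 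Your parenthetical treatment of edges asserts the uniform section is ``nonvanishing,'' which is where this failure is hidden. To repair the proof you should invoke the first statement of Theorem~\ref{t:nondeg} (existence of a $G$-invariant non-degenerate hypersurface) rather than the second, verify the base-point-free condition of Remark~\ref{r:bpf} only for faces of dimension greater than one via the monomial observation above, and handle vertices and edges by the separate smoothness argument.
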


In particular, if $\dim P = 2$ and $P$ contains a $G$-fixed lattice point, then $\phi[t]$ is effective (Corollary~\ref{c:dim2}), and if the order of $G$ divides $m$, then $\phi_{mP}[t]$ is effective (Corollary~\ref{c:multiple}). 

Finally, we consider applications and examples of this theory. 
Firstly, we  prove that if $P$ contains the origin and the fan over its faces can be refined to a smooth, $G$-invariant fan $\triangle$ such that the  primitive integer vectors of the rays of $\triangle$ coincide with the non-zero vertices of $P$, then $\phi[t]$ coincides with the character of the representation of $G$ on the cohomology $H^*(X, \C)$ of the associated toric variety $X = X(\triangle)$  (Proposition~\ref{p:cohomology}). If $\triangle$ is the Coxeter fan associated to a root system, then we recover a formula of Procesi, Dolgachev and Lunts, and Stembridge \cite[Theorem~1.4]{SteSome}  for the character associated to the action of the Weyl group on  $H^*(X, \C)$ (Corollary~\ref{c:Weyl}). These characters have been studied by
Procesi~\cite{ProToric}, Stanley \cite[p. 529]{StaLog}, Dolgachev, Lunts~\cite{DLCharacter}, Stembridge~\cite{SteSome, SteEulerian} and Lehrer~\cite{LehRational}. In the type $A$ case, we show that we may also realize $\phi[t]$ from the action of $G = \Sym_d$ on the hypercube $P = [0,1]^d$ (Lemma~\ref{l:typeA}). 
In this case, we use results of Stembridge in \cite{SteEulerian} to give an explicit description of $\phi[t]$ in  terms of \define{marked tableaux} (Proposition~\ref{p:cube}). 

Observe that whenever $\phi[t]$ is effective, we obtain a refinement of the $h^*$-polynomial by considering dimensions of isotypic components of $\phi[t]$. In the case of the hypercube above, we recover Stembridge's refinement of the Eulerian numbers (Remark~\ref{r:refinement}). 

Secondly, observe that 
the characters $\{ \chi_{mP} \}_{m \ge 0}$ 
 encode the Ehrhart theory of the rational polytopes $P_g = \{ u \in P \mid g \cdot u = u \}$ for all $g$ in $G$. 
More specifically, 
$\chi_{mP}(g) = f_{P_g}(m) := \# (mP_g \cap M')$ (Lemma~\ref{l:character}), where $f_{P_g}(m)$ is a quasi-polynomial called the \define{Ehrhart quasi-polynomial} of $P_g$.  
When $P$ is a simplex, we deduce a formula for the generating series of  $f_{P_g}(m)$
(Proposition~\ref{p:simplex}). 
A \define{pseudo-integral polytope} is a rational polytope whose Ehrhart quasi-polynomial is a polynomial 
(see the work of De Loera, McAllister and Woods \cite{DLMVertices, MWMinimum}), and we apply this formula to construct new pseudo-integral polytopes in all dimensions in Section~\ref{s:pip}. 

Lastly, it follows from Corollary~\ref{c:fixed} that  if $G = \Z/2\Z$ and $P$ is a \define{centrally symmetric polytope}, then $\phi[t]$ is effective. 
In Section~\ref{s:central}, we show that this fact is equivalent to the lower bounds $h_i^* \ge \binom{d}{i}$ on the coefficients of the $h^*$-polynomial of a centrally symmetric polytope,  that were proved 
by
 Bey, Henk and Wills in \cite[Remark 1.6]{BHWNotes}.
  We give
  an explicit description of $\phi[t]$ for all non-singular, centrally symmetric, reflexive polytopes (Proposition~\ref{p:centrally}), using a classification result of Klyachko  and  Voskresenski{\u\i}
  \cite{KVToroidal}.


We end the introduction with a brief outline of the contents of the paper. In Section~\ref{s:basicEhrhart} and Section~\ref{s:representation} we recall some basic facts about Ehrhart theory and representation theory respectively. In Section~\ref{s2} we reduce to the case when $\dim P + 1 = \dim M'_\R$, and provide the setup for the rest of the paper. In Section~\ref{s:Ehrhart} and Section~\ref{s:hstar} we prove our results on the representations $\{ \chi_{mP} \mid m \ge 0 \}$ and $\phi[t]$ respectively. In Section~\ref{s:effectiveness} we give criterion to determine whether or not $\phi[t]$ is effective, and 
in Section~\ref{s:cohomology} we give a geometric interpretation of $\phi[t]$ for a special class of polytopes. In Section~\ref{s:hypercube} and Section~\ref{s:central} we present examples when $P$ is a hypercube and a centrally symmetric polytope respectively. 
In Section~\ref{s:pip} we demonstrate how our results can be applied to compute the Ehrhart quasi-polynomials of certain rational polytopes. Finally, in Section~\ref{s:open} we present some open questions and conjectures.

\medskip

\noindent
{\it Notation and conventions.}  
All representations will be defined over $\C$. We often identify a representation $\chi$ with its associated character and write $\chi(g)$ for the evaluation of the character of $\chi$ at $g \in G$. 
If $V$ is a $\Z$-module, then we write $V_\R = V \otimes_\Z \R$ and $V_\C = V \otimes_\Z \C$. 

\medskip
\noindent
{\it Acknowledgements.}  
The author would like to thank Dave Anderson, Kalle Karu, Gus Lehrer, Mircea Musta\c t\v a, 
Benjamin Nill,  Roberto Paoletti,
John Stembridge, Geordie Williamson and Josephine Yu for helpful comments and enlightening discussions. In particular, Remark~\ref{r:Dave} is due to Dave Anderson, and Remark~\ref{r:Karu} is due to Kalle Karu.

\section{Ehrhart theory}\label{s:basicEhrhart}

In this section, we recall some basic facts about Ehrhart theory, and refer the reader to \cite{BRComputing} and \cite{HibAlgebraic} for introductions to the subject. 


Let $M$ be a lattice and let $Q \subseteq M_\R$ be a rational $d$-dimensional polytope. 
The \define{denominator} of $Q$ is the smallest positive integer $m$ such that $mQ$ is a lattice polytope. 
If we let $f_Q(m) = \#(mQ \cap M)$ for any positive integer $m$, then a classical result of Ehrhart \cite{EhrLinearI} asserts that $f_Q(m)$ is a quasi-polynomial of degree $d$, called the \define{Ehrhart quasi-polynomial} of $Q$. That is, there exists a positive integer $l$ and polynomials $g_0(m), \ldots, g_{l - 1}(m)$ such that $f_Q(m) = g_i(m)$ whenever $m \equiv i$ mod $l$. The minimal choice of such $l$ is called the \define{period} of the quasi-polynomial $f_Q(m)$. 
Ehrhart proved that
$f_Q(0) = 1$, and that the period of $f_Q(m)$ divides the denominator of $Q$.
Moreover, if we set $f^\circ_Q(m) = \# (\Int(mQ) \cap M)$ for any positive integer $m$, where $\Int(mQ)$ denotes the interior of $mQ$, then 
\[
f_Q(- m) = (-1)^d f^\circ_Q(m). 
\]
The latter result is known as \define{Ehrhart reciprocity}. The \define{index} $\ind(Q)$ of $Q$ is the smallest positive integer $m$ such that the affine span of $m Q$ contains a lattice point. With the notation above, the polynomial $g_i(m)$ is a polynomial of degree $d$ with leading coefficient equal to the  volume $\vol(Q)$ of $Q$ if $\ind(Q)$ divides $i$, and is identically zero otherwise.
Here $\vol(Q)$ equals the   Euclidean volume of $Q \subseteq \aff(Q)$ with respect to the lattice $\aff(Q) \cap M$, where 
$\aff(Q)$ denotes the affine span of $Q$.  
Alternatively, if
we write 
\[
f_Q(m) = c_d(m)m^d + c_{d - 1}(m) m^{d - 1} +  \cdots + c_0(m), 
\]
where $c_i(m)$ is a periodic function in $m$, 
then  $c_0(0) = 1$, and
\begin{equation}\label{e:dindex}
c_d(m) =   \left\{\begin{array}{cl} 
\vol(Q) & \text{if } \ind(Q) | m 
 \\ 0 & \text{otherwise}. \end{array}\right. 
\end{equation}
If $Q$ is a lattice polytope, then $c_i(m) = c_i$ is constant, and $c_{d - 1}$ may be interpreted as 
half the (normalized) surface area of $Q$. Here the \emph{normalized surface area} of a facet $F$ of $Q$ equals the Euclidean volume of $F \subseteq \aff(F)$ with respect to the lattice $\aff(F) \cap M$.

 Let $P \subseteq M_\R$ be a $d$-dimensional lattice polytope.
 After possibly replacing $M_\R$ with the affine span of $P$, we may and will assume that $M$ has rank $d$. 
  It follows from the above result that
 $f_P(m)$ is a polynomial of degree $d$, called the \define{Ehrhart polynomial} of $P$. 
By a routine argument, it follows that its generating series has the form
\[
\sum_{m \ge 0} f_P(m) t^m = \frac{h^*(t)}{(1 - t)^{d + 1}},
\]
where $h^*(t) = h^*_P(t)  =  \sum_{i = 0}^d h_i^* t^i$ is a polynomial of degree at most $d$ with integer coefficients, 
called the \define{$h^*$-polynomial} of $P$.  Alternative names in the literature include 
$\delta$-polynomial and Ehrhart $h$-polynomial. 
Ehrhart reciprocity translates into the following equality
\begin{equation}\label{hreciprocity}
\sum_{m \ge 1}  f^\circ_P(m) t^m = \frac{t^{d + 1}h^*(t^{-1})}{(1 - t)^{d + 1}}. 
\end{equation}
Observe that $h^*_0 = 1$, $h^*_1 = \# (P \cap M) - d - 1$ and $h_d^* =  \# (\Int(P) \cap M)$. Since $P$ has at least $d + 1$ vertices, we conclude that 
\begin{equation}\label{e:easyinequality}
0 \le h_d^* \le h_1^*. 
\end{equation}
In fact, Stanley used the theory of Cohen Macauley rings to prove that the coefficients $h_i^*$ are non-negative integers \cite{StaDecompositions}. A combinatorial proof was later given by Betke and McMullen in \cite{BMLattice}. 
The \define{degree} $s$ of $P$ is defined to be the degree of  $h^*(t)$, and the \define{codegree} $l$ of $P$ is defined by $l = d + 1 - s$. 
Ehrhart reciprocity implies that the codegree can be interpreted as $l = \min\{ m \mid \Int(mP) \cap M \ne \emptyset \}$, 
and the leading coefficient of $h^*(t)$ is given by $h^*_s = \#( \Int(lP) \cap M )$. 

The polytope $P$ is \define{reflexive} if the origin is its unique interior lattice point, and every non-zero lattice point in $M$ lies in the boundary of $mP$  for some positive integer $m$. 
The following theorem of Stanley was proved using commutative algebra, while a combinatorial proof was recently given by the author in \cite[Corollary~2.18]{YoInequalities}. 

\begin{theorem}\cite[Theorem 4.4]{StaHilbert2}\label{t:basicreflexive}
If $P$ is a lattice polytope of degree $s$ and codegree $l$, then the following are equivalent
\begin{itemize}
\item  $f^\circ_P(m) = f_P(m - l)$ for $m \ge l$,
\item  $h^*_{P}(t) = t^{s} h^*_{P}(t^{-1})$,
\item  $lP$ is a translate of a reflexive polytope. 
\end{itemize}

\end{theorem}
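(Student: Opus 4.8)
The plan is to label the three conditions (i), (ii), (iii) in the order listed, to prove (i)$\Leftrightarrow$(ii) by a direct manipulation of Ehrhart series, and then to obtain (iii)$\Rightarrow$(i) and (i)$\Rightarrow$(iii) from one lemma relating reflexivity to dilated polytopes. For (i)$\Leftrightarrow$(ii): write $h^*(t)=\sum_{i=0}^{s}h^*_i t^i$ with $h^*_s\ne 0$, so that, since $l=d+1-s$, reindexing gives $t^{d+1}h^*(t^{-1})=t^{l}\sum_{i=0}^{s}h^*_{s-i}t^i$. Substituting this into \eqref{hreciprocity} and comparing with $t^{l}\sum_{m\ge 0}f_P(m)t^m=\sum_{m\ge l}f_P(m-l)t^m=t^{l}h^*(t)/(1-t)^{d+1}$, and using that $f^\circ_P(m)=0$ for $1\le m<l$ (because $l=\min\{m:\Int(mP)\cap M\ne\emptyset\}$) so that both series have vanishing coefficients in degrees below $l$, one sees that (i) is equivalent to the equality of these two generating series, hence to $\sum_i h^*_{s-i}t^i=\sum_i h^*_i t^i$, which is exactly (ii).

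The heart of the matter is the following observation: a lattice polytope $R$ with $0\in\Int R$ is reflexive if and only if $\Int(kR)\cap M=(k-1)R\cap M$ for every $k\ge 1$, equivalently (using that the inclusion $(k-1)R\cap M\subseteq\Int(kR)\cap M$ always holds) $f^\circ_R(k)=f_R(k-1)$ for every $k\ge 1$. That inclusion holds because $(k-1)R=\tfrac{k-1}{k}(kR)$ is a homothety of $kR$ by a factor in $[0,1)$ about the interior point $0$, so it lands inside $\Int(kR)$. For the reverse inclusion when $R$ is reflexive: given $y\in\Int(kR)\cap M$ with $y\ne 0$, reflexivity provides $m\ge 1$ with $y\in\partial(mR)$, and $y\in\Int(kR)$ forces $m\le k-1$ (if $m\ge k$ then $y\in\Int(kR)\subseteq\Int(mR)$, contradicting $y\in\partial(mR)$), whence $y\in mR\subseteq(k-1)R$. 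Conversely, if the set equalities hold then $k=1$ gives $\Int(R)\cap M=\{0\}$, and for $y\in M\setminus\{0\}$ the least $m_0\ge 1$ with $y\in m_0 R$ satisfies $y\notin(m_0-1)R$, hence $y\notin\Int(m_0 R)$ by the $k=m_0$ equality, so $y\in\partial(m_0 R)$; thus $R$ is reflexive.

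Granting this, (iii)$\Rightarrow$(i): write $lP=R+u$ with $R$ reflexive and $u\in M$, so that $f_R(y)=f_P(ly)$ for non-negative integers $y$ (the translation by $yu\in M$ preserves lattice-point counts), hence $f_R(y)=f_P(ly)$ as polynomials. By the observation and Ehrhart reciprocity for $R$, $f_R(-k)=(-1)^d f^\circ_R(k)=(-1)^d f_R(k-1)$, so the polynomial identity $f_R(x)=(-1)^d f_R(-1-x)$ holds; feeding in $f_R(y)=f_P(ly)$ gives $f_P(z)=(-1)^d f_P(-l-z)$, and putting $z=-m$ and using Ehrhart reciprocity for $P$ yields $f^\circ_P(m)=f_P(m-l)$ for all positive integers $m$, which is (i). For (i)$\Rightarrow$(iii): by (i), $f^\circ_P(l)=f_P(0)=1$, so $lP$ has a unique interior lattice point $v\in M$; set $R:=lP-v$, so $0$ is the unique interior lattice point of $R$, and by (i), $f^\circ_R(k)=f^\circ_P(kl)=f_P((k-1)l)=f_R(k-1)$ for every $k\ge 1$, whence $R$ is reflexive by the observation and $lP=R+v$ is a translate of a reflexive polytope. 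The main obstacle is the reverse inclusion $\Int(kR)\cap M\subseteq(k-1)R\cap M$ in the observation: this is the one place where the geometry of reflexive polytopes genuinely enters, everything else being formal bookkeeping with Ehrhart reciprocity and generating functions.
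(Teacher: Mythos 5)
Your proof is correct. Note, however, that the paper does not prove Theorem~\ref{t:basicreflexive} at all: it is quoted from Stanley \cite[Theorem 4.4]{StaHilbert2}, whose original argument goes through commutative algebra (Hilbert series of the associated semigroup ring being Gorenstein), and the paper also points to a combinatorial proof in \cite[Corollary~2.18]{YoInequalities}. So what you have produced is a self-contained elementary proof, which is a genuinely different (and arguably more transparent) route. Your two ingredients are sound: the generating-function computation showing that $f^\circ_P(m)=f_P(m-l)$ for $m\ge l$ is equivalent to the palindromy $h^*_j=h^*_{s-j}$ (using that $f^\circ_P(m)=0$ for $1\le m<l$, which the paper records as $l=\min\{m\mid \Int(mP)\cap M\ne\emptyset\}$), and the characterization of reflexivity, in the paper's sense of ``unique interior lattice point $0$ and every nonzero lattice point on the boundary of some dilate,'' as the set identity $\Int(kR)\cap M=(k-1)R\cap M$ for all $k\ge1$. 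You correctly isolate the one nontrivial inclusion and handle it with the homothety about the interior point $0$; and in the direction (i)$\Rightarrow$(iii) you correctly upgrade equality of cardinalities to equality of sets via the automatic inclusion $(k-1)R\cap M\subseteq\Int(kR)\cap M$. The only point you leave tacit is that the translation vector $u$ in $lP=R+u$ is automatically a lattice vector (difference of a vertex of $lP$ and a vertex of $R$), which is needed for $f_R(y)=f_P(ly)$; this is immediate. What Stanley's algebraic approach buys is the link to Gorenstein rings; what yours buys is independence from that machinery, using only Ehrhart reciprocity and convexity.
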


\begin{remark}\label{r:smooth}
Let $\triangle$ be a smooth, $d$-dimensional fan in $M_\R$, and let $|\triangle|$ denotes the support of $\triangle$. Let $\psi: |\triangle| \rightarrow \R$ be the piecewise linear function with respect to $\triangle$ that has value $1$ at the primitive lattice points of the rays of $\triangle$. If $P = \{ v \in M_\R \mid \psi(v) \le 1 \}$ is convex, then the $h^*$-polynomial of $P$ is equal to the \define{$h$-polynomial} of $\triangle$. That is, 
\[
h^*_P(t) = h_{\triangle}(t) =  \sum_{i = 0}^{d} f_{i} t^i (1 - t)^{d - i},
\]
where $f_i$ equals the number of cones in $\triangle$ of dimension $i$. 
Note that if $|\triangle| = M_\R$, then $P$ is reflexive. Also, one can define the $h^*$-polynomial of $P$ even if $P$ is not convex, and the above equality holds. 
\end{remark}

\excise{
We also consider the following result of Stanley (Theorem 4.4 \cite{StaHilbert2}), that was proved using commutative algebra. 
\begin{ntheorem}
If $P$ is a lattice polytope of degree $s$ and codegree $l$, then $\delta_{P}(t) = t^{s} \delta_{P}(t^{-1})$ if and only if 
$lP$ is a translate of a reflexive polytope.  
\end{ntheorem}
}

\section{Representation theory of finite groups}\label{s:representation}

In this section, we recall some basic facts about the representation theory of finite groups over the complex numbers. We refer the reader to \cite{FHRepresentation} and \cite{IsaCharacter} for an introduction to the subject and proofs of the statements below. 

If $G$ is a finite group,  then a (complex) \emph{representation}  of $G$ is a finite-dimensional complex vector space $V$ with a linear action $\rho: G \rightarrow GL(V)$ of $G$. We say that $V$ is \emph{irreducible} if it contains no non-trivial $G$-invariant subspaces. 
 Every representation is isomorphic to a direct sum of irreducible representations.
  If $W$ is an irreducible representation and   $V \cong \oplus V_i$, where each $V_i$ is irreducible, then the \emph{multiplicity} of $W$ in $V$ is the number of irreducible representations $V_i$ isomorphic to $W$. 
The 
 \define{representation ring} $R(G)$ is defined to be the quotient of the 
 free abelian group generated by isomorphism classes of $G$-representations
   by the $\Z$-submodule generated by relations of the form $V \oplus W - V - W$. 
Addition (respectively multiplication) of classes of representations is given by taking direct sums (respectively tensor products) of representations. 
Elements of $R(G)$ are called \define{virtual representations}, and we let $1 \in R(G)$ denote the class of the trivial representation of $G$. An element of $R(G)$ is an \define{effective representation} if is equal to the class of a representation $V$ of $G$. 

The \emph{character} $\chi: G \rightarrow \C$ associated to a representation $V$ is the function $\chi(g) = \tr(\rho(g))$, where $\tr$ denotes the trace function. A character of a representation is a \emph{class function} i.e. a function from $G$ to $\C$ that is constant on conjugacy classes.  Addition and multiplication in $\C$ gives the set of all class functions $\C_{\class}(G)$ the structure of a $\C$-algebra. 
The $\C$-algebra homomorphism from $R(G) \otimes_\Z \C$ to $\C_{\class}(G)$, taking a representation to its character, is an isomorphism. 
The vector space  $\C_{\class}(G)$ admits a Hermitian inner product
\[
\left\langle \alpha, \beta \right\rangle = \frac{1}{|G|} \sum_{g \in G} \overline{\alpha(g)} \beta(g),
\]
where $|G|$ denotes the order of $G$, and $\overline{a}$ denotes the complex conjugate of $a \in \C$.  The characters of the irreducible representations of $G$ form an orthonormal basis of $\C_{\class}(G)$. In the remainder of the paper, we will often identify a representation with its character. 

 If $\bigwedge^m V$ and $\Sym^m V$ denote the exterior and symmetric powers of $V$ respectively, then we have the following (well-known) equality in $R(G)[[t]]$. 

\begin{lemma}\label{l:exterior}
Let $G$ be a finite group and let $V$ be an $r$-dimensional representation. Then 
\begin{equation*}
\sum_{m \ge 0} \Sym^m V t^m =  \frac{ 1}{1 - Vt + \wedge^2 V t^2 - \cdots + (-1)^r \wedge^r V t^r}.
\end{equation*}
Moreover, if an element $g \in G$ acts on $V$ via a matrix $A$, and if $I$ denotes the identity $r \times r$ matrix, then both sides equal $\frac{1}{\det(I - tA)}$ 
when the associated characters are evaluated at $g$. 
\end{lemma}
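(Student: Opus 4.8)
The plan is to prove the two assertions separately, using nothing more than linear algebra of representations together with the classical generating function for symmetric powers.

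\medskip

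\noindent\textbf{Step 1: The identity in $R(G)[[t]]$.} First I would establish the formal identity
\[
\Bigl(\sum_{m \ge 0} \Sym^m V\, t^m\Bigr)\cdot\Bigl(\sum_{j=0}^{r} (-1)^j {\textstyle\bigwedge}^j V\, t^j\Bigr) = 1
\]
in $R(G)[[t]]$. The coefficient of $t^m$ on the left, for $m \ge 1$, is $\sum_{j=0}^{m} (-1)^j \bigl(\bigwedge^j V \otimes \Sym^{m-j} V\bigr)$ (with the convention $\bigwedge^j V = 0$ for $j > r$). So the statement reduces to the well-known Koszul-type relation
\[
\sum_{j=0}^{m} (-1)^j \,{\textstyle\bigwedge}^j V \otimes \Sym^{m-j} V = 0 \quad\text{in } R(G) \text{ for all } m \ge 1.
\]
The cleanest route is to exhibit the exact Koszul complex
\[
0 \to {\textstyle\bigwedge}^m V \to {\textstyle\bigwedge}^{m-1} V \otimes \Sym^1 V \to \cdots \to {\textstyle\bigwedge}^1 V \otimes \Sym^{m-1} V \to \Sym^m V \to 0,
\]
whose differentials are $G$-equivariant (they are built from the natural contraction/multiplication maps, which commute with the diagonal $G$-action). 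Exactness of the Koszul complex of the identity map is standard; since the maps are $G$-equivariant, the alternating sum of the terms vanishes in $R(G)$, which is exactly the relation above. Rearranging gives the stated geometric-series formula.

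\medskip

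\noindent\textbf{Step 2: Evaluation at $g$.} For the second assertion, fix $g \in G$ acting on $V$ by the matrix $A$. Apply the character homomorphism $R(G) \to \C_{\class}(G)$ and evaluate at $g$. On the left-hand side this turns $\sum \Sym^m V\, t^m$ into $\sum_m \operatorname{tr}\bigl(\Sym^m A\bigr) t^m$. Diagonalizing $A$ (or passing to its eigenvalues $\lambda_1,\dots,\lambda_r$, counted with multiplicity, over $\C$ — possible since $g$ has finite order, hence $A$ is diagonalizable), one has $\operatorname{tr}(\Sym^m A) = \sum_{i_1 \le \cdots \le i_m} \lambda_{i_1}\cdots\lambda_{i_m} = h_m(\lambda_1,\dots,\lambda_r)$, the complete homogeneous symmetric polynomial, and the classical identity $\sum_{m\ge 0} h_m(\lambda) t^m = \prod_{i=1}^r (1-\lambda_i t)^{-1} = 1/\det(I - tA)$ finishes that side. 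On the right-hand side, $\operatorname{tr}\bigl(\bigwedge^j A\bigr) = e_j(\lambda_1,\dots,\lambda_r)$, the elementary symmetric polynomial, so the denominator becomes $\sum_{j=0}^r (-1)^j e_j(\lambda) t^j = \prod_{i=1}^r (1 - \lambda_i t) = \det(I - tA)$. Thus both sides evaluate to $1/\det(I - tA)$, as claimed. Alternatively, one can avoid eigenvalues entirely by noting the identity of Step 1 already holds in $R(G)[[t]]$, so applying the ring homomorphism "evaluate character at $g$" automatically transports it; it then only remains to identify $\sum_j (-1)^j \operatorname{tr}(\bigwedge^j A) t^j$ with $\det(I - tA)$, which is the standard expansion of a characteristic polynomial.

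\medskip

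\noindent\textbf{Main obstacle.} There is no serious obstacle here — this is a folklore lemma. The only point requiring a little care is justifying the $G$-equivariance and exactness of the Koszul complex in Step 1 (or, in the eigenvalue approach, checking that the symmetric-function manipulations are legitimate over $\C$ and that $A$ is diagonalizable because $g$ has finite order). I would present Step 1 via the Koszul complex since it keeps everything at the level of $R(G)$ and makes the evaluation in Step 2 a one-line consequence.
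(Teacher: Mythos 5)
Your proof is correct, but Step 1 takes a genuinely different route from the paper. The paper's proof (attributed to Stembridge) is purely character-theoretic: since $g$ has finite order it is diagonalizable, say with eigenvalues $\lambda_1,\dots,\lambda_r$, and one checks directly that the characters of both sides evaluate at $g$ to $\frac{1}{(1-\lambda_1 t)\cdots(1-\lambda_r t)} = \frac{1}{\det(I-tA)}$; the identity in $R(G)[[t]]$ then follows because the character map $R(G)\otimes_\Z\C \to \C_{\class}(G)$ is injective (an isomorphism, as recalled in Section 3 of the paper). In other words, the paper's entire proof is your Step 2, applied to both sides, with the lift back to $R(G)$ left implicit. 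Your Step 1 instead establishes the identity directly in $R(G)$ via the $G$-equivariant Koszul complex $0 \to \bigwedge^m V \to \bigwedge^{m-1}V\otimes\Sym^1 V \to \cdots \to \Sym^m V \to 0$, which avoids character theory altogether and would work in settings where the character map is not injective or not available (e.g.\ modular representations); the price is that you must justify exactness and equivariance of the Koszul differentials, which is standard but not free. Given that the paper has already set up the character isomorphism, its eigenvalue computation is the shorter path, but your argument is a legitimate and somewhat more robust alternative.
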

\begin{proof}
The following simple proof was related to me by John Stembridge. If an element $g \in G$ acts of $V$, then, since $g$ has finite order, we may assume, after a change of basis, that $g$ acts via a diagonal matrix
 $(\lambda_1,\ldots, \lambda_r)$. Then both sides of  the equation equal $\frac{1}{(1 - \lambda_1 t)\cdots(1 - \lambda_rt)}$ when evaluated at $g$. 
\end{proof}

If $H$ is a subgroup of $G$, with group algebra $\C[H]$, and $W$ is an $H$-representation, then the \emph{induced representation} $\Ind_H^G W$ is the $G$-representation $\C[G] \otimes_{\C[H]} W$. 
If $W'$ is a representation of $G$, then we let $\Res_H^G W'$ denote 
the restriction of 
$W'$ to an $H$-representation. 
Frobenius reciprocity states that for a $G$-character $\chi$ and an $H$-character $\phi$, 
\[
\left\langle  \Ind_H^G \chi, \; \phi  \right\rangle = \left\langle  \chi, \; \Res_H^G \phi  \right\rangle.  
\]
If $G$ acts transitively on a set $S$, then the associated \emph{isotropy group} $H$ is 
the subgroup of $G$ that fixes a given $s$ in $S$, and is well-defined up to conjugation. The corresponding permutation representation is isomorphic to the induced representation $\Ind_H^G 1$ of the trivial representation of $H$. We immediately deduce the following lemma.  

\begin{lemma}\label{l:permutation}
Suppose $G$ acts on a set $S$, and let $\chi$ denote the corresponding permutation character. Then 
$\chi(g)$ equals the number of elements of $S$ fixed by $g$ in $G$, and if $\lambda: G \rightarrow \C$ is a $1$-dimensional representation, then  
the multiplicity of $\lambda$ in $\chi$ is equal to the number of $G$-orbits of $S$
whose isotropy subgroup is contained in the subgroup $\lambda^{-1}(1)$ of $G$. 
\end{lemma}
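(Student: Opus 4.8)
The plan is to treat the two assertions separately, both being immediate consequences of the facts recalled just above the statement. For the first assertion, I would use the basis of the permutation representation indexed by the elements of $S$: by definition $g$ sends the basis vector corresponding to $s$ to the one corresponding to $g \cdot s$, so the matrix of $\rho(g)$ is a permutation matrix whose $(s,s)$ diagonal entry is $1$ precisely when $g \cdot s = s$ and $0$ otherwise. Taking the trace gives $\chi(g) = \#\{ s \in S \mid g \cdot s = s \}$.

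For the second assertion, I would decompose $S$ into its $G$-orbits, $S = \bigsqcup_j O_j$, which yields a direct sum decomposition $\chi = \sum_j \chi_{O_j}$ of the permutation character, and hence an additive decomposition of the multiplicity of $\lambda$. By the discussion preceding the lemma, each $\chi_{O_j}$ is isomorphic to $\Ind_{H_j}^G 1$, where $H_j$ is the isotropy subgroup of a chosen point of $O_j$, well-defined up to conjugation. By Frobenius reciprocity, $\langle \Ind_{H_j}^G 1,\, \lambda \rangle = \langle 1,\, \Res_{H_j}^G \lambda \rangle$, and since $\lambda$ is one-dimensional this inner product equals $1$ if $\Res_{H_j}^G \lambda$ is the trivial character of $H_j$ --- equivalently, if $H_j \subseteq \lambda^{-1}(1)$ --- and equals $0$ otherwise. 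Summing over $j$, the multiplicity of $\lambda$ in $\chi$ is the number of orbits $O_j$ whose isotropy subgroup is contained in $\lambda^{-1}(1)$.

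Since the whole argument reduces to quoting the identification of a transitive permutation representation with $\Ind_H^G 1$ and applying Frobenius reciprocity, I do not expect any real obstacle. The only point meriting a word of care is that the condition ``$\Res_{H_j}^G \lambda$ is trivial'' is unaffected by replacing $H_j$ with a conjugate subgroup; this is clear because conjugation by an element of $G$ is an inner automorphism and $\lambda$, being a class function, is constant on conjugacy classes.
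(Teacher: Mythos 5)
Your proof is correct and follows exactly the route the paper intends: it states the lemma as an immediate consequence of the preceding discussion identifying a transitive permutation representation with $\Ind_H^G 1$ and applying Frobenius reciprocity (the paper gives no further proof). Your added remark that the condition $H_j \subseteq \lambda^{-1}(1)$ is conjugation-invariant is a correct and worthwhile point of care.
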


\excise{
We will also need the following lemma. 
\begin{lemma}\label{l:dual}
Suppose $G$ acts linearly on a lattice $N$ of rank $r$. Then we have isomorphisms of 
$G$-representations $\bigwedge^i N_\C \otimes \bigwedge^r N_\C \cong \bigwedge^{r - i} N_\C$.   
\end{lemma}
\begin{proof}
As in the previous proof, we may assume that $g \in G$ acts on $N_\C$ via a diagonal matrix
 $(\lambda_1,\ldots, \lambda_r)$, for some roots of unity $\lambda_i$. Note that $\lambda_i^{-1} = \overline{\lambda}_i$, and  
$g$ acts on $\bigwedge^r N_\C$ via multiplication by $\pm 1$, and hence 
$(\lambda_1\cdots \lambda_r)^2 = 1$. 
We conclude that the left hand side evaluated at $g$ is equal to 
 \[
 \lambda_1\cdots \lambda_r \sum_{k_1 < \cdots < k_i} \lambda_{k_1}\cdots \lambda_{k_i} = 
  \sum_{k'_1 < \cdots < k'_{ r- i}} \overline{\lambda}_{k'_1} \cdots \overline{\lambda}_{k'_{r - i}}
  =   \sum_{k'_1 < \cdots < k'_{ r- i}} \lambda_{k'_1} \cdots \lambda_{k'_{r - i}}. 
   \]
\end{proof}
}

\begin{example}[The symmetric group]\label{e:symmetric}
If $G = \Sym_d$ denotes the symmetric group on $d$ letters, then the irreducible representations $\chi^{\lambda}$ of $G$ 
are indexed by partitions $\lambda$ of $d$. For example, $\chi^{(d)}$ is the trivial  representation, $\chi^{(1^d)}$ is the \emph{sign} representation, and 
 $\chi^{(d -  1, 1)}$ is the \emph{reflection representation} corresponding to the standard action of $G$ on    $\C^d/\C(1,\ldots, 1)$. More generally, the hook partitions $\chi^{(d - r,1^{r})} =   \bigwedge^{r} \chi^{(d -  1, 1)}$ correspond to exterior powers of the reflection representation. 

\excise{
Let  $G = \Sym_n$ be the symmetric group on $n$ letters, and let $V_{\st} \cong \C^n$ denote the standard representation. 
The irreducible representations $\chi^{\lambda}$ of $G$ 
are indexed by partitions $\lambda$ of $n$. 
 For example, $\chi^{(n)}$ is the trivial representation, $\chi^{(1^n)}$ is the sign representation, and 
 $\chi^{(n -  1, 1)}$ is the \emph{reflection representation} corresponding to the vector space  
 $V_{\st}/(1

 the partition $(n)$ corresponds to the trivial representation, the partition $(1,1,\ldots, 1)$ corresponds to the sign representation, and the partition $(n - 1, 1)$ corresponds the quotient $V_{\st}/\C(1,\ldots, 1)$.  More generally, 
 $V_{(r,1 ,\ldots, 1)} \cong \bigwedge^{n - r} V_{n - 1, 1}$ for $1 \le r \le n$. 
 }
\end{example}

\section{The setup}\label{s2} 

Recall from the introduction that $G$ is  a finite group acting linearly on a lattice $M' \cong \Z^n$, and $P$ is a $d$-dimensional $G$-invariant lattice polytope. In this section, we explain how one can always reduce to the case when 
$M' = M \oplus \Z$ for some lattice $M$ of rank $d$ and $P \subseteq M \times 1$. 
We also show that one may equivalently consider $d$-dimensional lattice polytopes in lattices of rank $d$, that are $G$-invariant `up to translation'. The setup deduced at the end of the section will be used throughout the paper. 

Observe that the affine span $W$ of $P$ in $M'_\R$ is $G$-invariant. If we fix a lattice point $\bar{u} \in W \cap M'$, then $M := W \cap M' - \bar{u}$ has the structure of a lattice of rank $d$ and $G$ acts linearly on $M$ via
\[
g \cdot (u - \bar{u})  = gu - g\bar{u} = (gu  - g\bar{u} + \bar{u}) - \bar{u},
\] 
for all $g \in G$ and $u \in W \cap M'$. Regarding $P$ as a lattice polytope in $M$, we see that $P$ is 
invariant under $G$ `up to translation'. That is, if we set consider the function $w: G \rightarrow M$ defined by $w(g) = g\bar{u} - \bar{u}$, then $w(1) = 0$,  $w(gh) = w(g) + g \cdot w(h)$, and if we identify $P$ with the lattice polytope $P - \bar{u}$ in $M$, then  $g \cdot P = P - w(g)$ in $M$ for all $g \in G$. 


Conversely, assume that $G$ acts linearly on a $d$-dimensional lattice $M$, and $P$ is a $d$-dimensional lattice polytope that is invariant under $G$ `up to translation'.  That is, assume there exists a function $w : G \rightarrow M$ satisfying $w(1) = 0$ and $w(gh) = w(g) + g \cdot w(h)$, and such that $g \cdot P = P - w(g)$ for all $g \in G$. Then  $G$ acts linearly on the lattice $M' = M \oplus \Z$ as follows:  $g \cdot (u, \lambda) = (g \cdot u - \lambda w(g), \lambda)$ for any $g \in G$ and $(u,\lambda) \in M'$. If we identify $P$ with the lattice polytope $P \times  1$ in $M'$, then $P$ is invariant under the action of $G$.  
Note that we recover the original linear action of $G$ on $M$ and the induced action on $P$ `up to translation' via the action of $G$ on $M \times  0  \subseteq M'$ and $P \times  0$ respectively. Moreover, the complex $G$-representation $(M')_\C$ 
is isomorphic to $M_\C \oplus \C$, where $\C$ denotes the trivial representation. 

The preceding discussion motivates the following \define{setup}:


\emph{Let $G$ be a finite group acting linearly on a lattice $M' = M \oplus \Z$ of rank $d + 1$ such that the projection $M' \rightarrow \Z$ is equivariant with respect to the trivial action of $G$ on $\Z$. Let $P \subseteq M_\R \times  1$ be a $G$-invariant, $d$-dimensional lattice polytope.  
}

\emph{
By identifying $M$ with $M \times 0$, we regard $M$ as a lattice with a linear $G$-action
$\rho: G \rightarrow GL(M)$, and consider the corresponding complex $G$-representation  $M_\C$. 
We often identify $P$ with the lattice polytope $\{ u \in M_\R \mid u \times 1 \in P \}$ in $M_\R$, that is $G$-invariant `up to translation'. } 

\section{Equivariant Ehrhart theory}\label{s:Ehrhart}

The goal of this section is to study the permutation characters $\{ \chi_{mP} \}_{m \ge 0}$, and establish equivariant analogues of  
Ehrhart's original results (see Section~\ref{s:basicEhrhart}). Throughout the paper, we often 
identify representations with their characters. 

We will continue with the setup of Section~\ref{s2} above. 
We often abuse notation, and consider $P$ as a polytope in $M_\R$. 
Recall that for any positive integer $m$, 
$\chi_{mP}$ (respectively $\chi^*_{mP}$)
denotes 
the complex permutation representation induced by the action of $G$ on the lattice points $mP \cap M$ (respectively $\Int(mP) \cap M$), and
 $\chi_{mP}$ denotes the trivial representation when $m = 0$.  

Consider the following rational polytopes. 

\begin{definition}
For any $g \in G$, let $P_g = \{ u \in P \mid g \cdot u = u \} \subseteq M_\R$. 
\end{definition}

The following simple lemma provides the motivation to consider these polytopes. 
Recall from Section~\ref{s:basicEhrhart} that $f_{P_{g}}(m) = \#(mP_g \cap M)$ is the Ehrhart quasi-polynomial of $P_g$, and $f^\circ_{P_{g}}(m) = \#(\Int (mP_g) \cap M)$ for $m \ge 1$. 

\begin{lemma}\label{l:character}
For any positive integer $m$, $\chi_{mP}(g) = f_{P_g}(m)$ and $\chi_{mP}^*(g) = f_{P_g}^\circ(m)$. 
Also, $\chi_{mP}(g) = f_{P_g}(m) = 1$ when $m = 0$. 
\end{lemma}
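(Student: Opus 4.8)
The plan is to compute the permutation character $\chi_{mP}$ directly using Lemma~\ref{l:permutation}, which tells us that $\chi_{mP}(g)$ counts the number of elements of $mP \cap M$ that are fixed by $g$. So the first step is to observe that a lattice point $u \in mP \cap M$ is fixed by $g$ precisely when $g \cdot u = u$, i.e. when $u \in mP_g \cap M$, where $mP_g = \{ v \in mP \mid g \cdot v = v\}$. The only subtlety here is that the $G$-action on $M$ may only fix $P$ "up to translation" (recall the setup of Section~\ref{s2}), so I should be careful to work with the genuine action of $G$ on $mP \cap M' = mP \cap (M \times \{m\})$, where $G$ acts linearly and $P$ is honestly $G$-invariant; then the fixed points of $g$ acting on $mP \cap M'$ are exactly $(mP)_g \cap M'$. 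Since the projection to the last coordinate is $G$-equivariant with trivial action, scaling by $m$ and intersecting commutes with taking $g$-fixed loci, so $(mP)_g = m(P_g)$ as subsets of $M_\R \times \{m\}$, and hence $\#((mP)_g \cap M') = \#(mP_g \cap M) = f_{P_g}(m)$ by the definition of the Ehrhart quasi-polynomial recalled in Section~\ref{s:basicEhrhart}.

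The second step is the identical argument for the interior points: $\chi^*_{mP}(g)$ counts the $g$-fixed elements of $\Int(mP) \cap M$, and a $g$-fixed lattice point lies in $\Int(mP)$ iff it lies in $\Int(mP_g)$ relative to the affine span of $P_g$ — here one should note that the relevant interior is taken within $\aff(P_g)$, which matches the convention for $f^\circ_{P_g}(m)$ used in Section~\ref{s:basicEhrhart}. This gives $\chi^*_{mP}(g) = f^\circ_{P_g}(m)$ for all $m \ge 1$.

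Finally, the $m = 0$ case: by convention $\chi_{0P}$ is the trivial character, so $\chi_{0P}(g) = 1$ for all $g$, and on the other side $0 \cdot P_g = \{0\}$ is a single lattice point (the origin lies in $M$), so $f_{P_g}(0) = \#(\{0\} \cap M) = 1$; these agree. (This is also consistent with the general fact $f_Q(0) = 1$ recalled earlier.)

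I do not expect any real obstacle here — the statement is essentially a definitional unwinding combined with the elementary observation that "take $g$-fixed points" commutes with "dilate by $m$ and intersect with the lattice". The only point requiring a moment of care is the interplay between the honest linear $G$-action on $M' = M \oplus \Z$ and the "up to translation" action on $M$: one must phrase the fixed-point computation at the level of $M'$ (where $P$ is genuinely invariant) before projecting down, so that the set $P_g$, which is defined using the possibly-translated action on $M_\R$, really does describe the $g$-fixed lattice points being counted.
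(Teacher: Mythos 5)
Your proposal is correct and follows the same route as the paper: invoke Lemma~\ref{l:permutation} to identify $\chi_{mP}(g)$ with the number of $g$-fixed lattice points of $mP$, and then identify that fixed-point set with $mP_g \cap M$. The extra care you take with the ``up to translation'' issue (working in $M'$ where the action is genuinely linear) is a point the paper's one-line proof leaves implicit, but it is the intended reading and does not constitute a different argument.
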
 
\begin{proof}
Since $\chi_{mP}$ is a permutation representation, $\chi_{mP}(g)$ is equal to the number of lattice points in $mP$ fixed by $g$ (Lemma~\ref{l:permutation}). The latter is equal to $f_{P_g}(m)$. The rest of the lemma follows similarly. 
\end{proof}

Let $M^g$ denote the subspace of $M_\R$ fixed by $g$. 
Note that, if we fix an isomorphism $M \cong \Z^d$, then $g$ acts on $M$ via an integer-valued matrix $A$, and $\dim M^g$ equals the number of times $1$ occurs (with multiplicity) as an eigenvalue of $A$. 

\begin{lemma}\label{l:dimension}
With the notation above, $\dim P_g = \dim M^g$.  
\end{lemma}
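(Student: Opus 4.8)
The plan is to show the two quantities $\dim P_g$ and $\dim M^g$ are equal by proving each is at most the other, using the fact that $P$ is a $d$-dimensional polytope in $M_\R$ (so its affine span is all of $M_\R$) together with the observation that $g$ acts on $M_\R$ with finite order, hence semisimply.

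First, the easy inequality: $P_g = \{u \in P \mid g\cdot u = u\}$ is contained in the affine subspace $\{u \in M_\R \mid g\cdot u = u\}$. Since $g$ fixes the origin (the action on $M$ is linear), this affine subspace is the linear subspace $M^g$, so $P_g \subseteq M^g$ and therefore $\dim P_g \le \dim M^g$.

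For the reverse inequality I would use averaging. Since $g$ has finite order $k$, the operator $\pi_g := \frac{1}{k}\sum_{j=0}^{k-1} g^j$ is the $\R$-linear projection of $M_\R$ onto $M^g$. Pick any point $u_0$ in the relative interior of $P$; since $\dim P = d$, we may take $u_0$ so that a small ball around $u_0$ lies in $P$. Now I claim $\pi_g(P) \subseteq P_g$: indeed, for $u \in P$, each $g^j u$ lies in $P$ (here one must be slightly careful — in the setup $P$ is only $G$-invariant ``up to translation'' when viewed in $M_\R$, but $g\cdot P_g$-membership is cleanest to check in $M' = M\oplus\Z$ where $P \times 1$ is genuinely $G$-invariant; applying $\pi_g$ to $P\times 1$ and projecting back, the $\Z$-coordinate stays $1$, so the image lies in $P$), hence the average $\pi_g(u)$ lies in $P$ by convexity, and it is $g$-fixed by construction, so $\pi_g(u) \in P_g$. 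Applying $\pi_g$ to the $d$-dimensional ball around $u_0$ yields a subset of $P_g$ that spans $\pi_g(M_\R) = M^g$ affinely (since $\pi_g$ restricted to $M^g$ is the identity and any ball surjects onto $M^g$ under a linear projection), so $\dim P_g \ge \dim M^g$. Combining the two inequalities gives $\dim P_g = \dim M^g$.

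The only real subtlety — the ``main obstacle'' — is bookkeeping around the two incarnations of $P$: as a genuinely $G$-invariant polytope in $M' = M\oplus\Z$ versus as a polytope in $M_\R$ that is only invariant up to the cocycle $w(g)$. The cleanest fix is to run the entire averaging argument in $M'_\R$ on the honestly-invariant polytope $P\times 1$, noting that the projection operator $\pi_g$ on $M'_\R$ commutes with the equivariant projection $M'_\R \to \R$ (which $G$ fixes pointwise), so $\pi_g(P\times 1) \subseteq (\text{something})\times 1 = P'\times 1$ with $P' \subseteq P$, and that the fixed space of $g$ on $M'_\R$ is $M^g \times \{0\}$ shifted appropriately; one then reads off $\dim P_g$ and $\dim M^g$ from this picture. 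Everything else is routine linear algebra and convexity.
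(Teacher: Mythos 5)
Your proof is correct and rests on the same key idea as the paper's: averaging over the group to produce a $g$-fixed point in the interior of $P$ (the paper uses the centroid of the vertex set, you apply the projector $\pi_g$ to a ball around an interior point), after which $P_g$ is identified with the intersection of $P$ with a $\dim M^g$-dimensional affine slice. The bookkeeping issue you flag is exactly the one the paper resolves by working in $M'_\R$ with the honestly invariant polytope $P \subseteq M_\R \times 1$, so your "cleanest fix" is precisely the paper's setup.
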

\begin{proof}
If $(M')^g$ denotes the linear subspace of $M'_\R$ fixed by $g$, then, by definition, $P_g$ is the rational polytope $P \cap (M')^g$. Note that $(M')^g$ intersects $M_\R \times 1$ in an affine subspace of dimension $\dim M^g$, and hence we only need to show that $(M')^g \cap \Int(P) \ne \emptyset$. On the other hand,  if $\{ v_i \mid i \in I \}$ denotes the vertices of $P$, then
$\frac{1}{|I|} \sum_{i \in I} v_i$ is a $G$-invariant point in the interior of $P$.  
\end{proof}

If we fix an element $g \in G$, then $g$ permutes the 
 set of vertices $\{ v_i \mid i \in I \}$ of $P$.  Let $I / g$ denote the set of orbits of $I$ under the action of $g$, and, for each orbit $\iota \in I / g$, let $v_\iota = \frac{1}{|\iota|} \sum_{i \in \iota} v_i$ be the corresponding rational point in $P_g$. 
 Recall that $P$ is a \emph{simplex} if it has precisely $d + 1$ vertices.

 \begin{lemma}\label{l:vertices}
 With the notation above, 
 $P_g$  is the convex hull of $\{ v_\iota  \mid \iota \in I / g \}$.
 In particular, if $g^r$ fixes the vertices of $P$, then $rP_g$ is a lattice polytope.  
 Moreover, if $P$ is a simplex, then $P_g$ is a simplex with (distinct) vertices
 $\{ v_\iota  \mid \iota \in I / g \}$. 
 \end{lemma}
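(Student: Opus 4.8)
The plan is to identify $P_g$ as the image of $P$ under the averaging (Reynolds) projection onto the fixed subspace and then read off both the vertices and the simplex statement from that. Concretely, let $r$ be the order of $g$ and let $\pi_g \colon M_\R \to M^g$ be the linear projection $\pi_g(u) = \frac{1}{r}\sum_{j=0}^{r-1} g^j \cdot u$. First I would record the elementary facts that $\pi_g$ is the identity on $M^g$, that $\pi_g(M_\R) = M^g$, and that for any point $u$ one has $g\cdot u = u$ if and only if $\pi_g(u) = u$; hence $P_g = P \cap M^g = \{\, u \in P : \pi_g(u) = u \,\}$. Since $g$ maps $P$ into $P$ up to the translation $w(g)$ and $P_g \subseteq M^g$, I would first check that on the affine slice $M_\R \times 1$ the relevant translation vanishes on the fixed subspace, so that $\pi_g$ genuinely carries $P$ onto $P_g$ (this is where one uses $P \subseteq M_\R \times 1$ together with $g$-invariance of $P$ as a polytope in $M'$; alternatively, work in $M'_\R$ with the honest linear action and intersect with $(M')^g$, exactly as in the proof of Lemma~\ref{l:dimension}).

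Granting $\pi_g(P) = P_g$: since $\pi_g$ is affine-linear and $P = \conv\{ v_i : i \in I\}$, we get $P_g = \pi_g(P) = \conv\{\, \pi_g(v_i) : i \in I \,\}$. Now $\pi_g(v_i)$ depends only on the $g$-orbit $\iota \ni i$, and equals $\frac{1}{r}\sum_{j=0}^{r-1} g^j v_i = \frac{1}{|\iota|}\sum_{k \in \iota} v_k = v_\iota$ because the $g$-orbit of $v_i$ is $\{ v_k : k \in \iota\}$, each point appearing $r/|\iota|$ times in the sum over $j$. Hence $P_g = \conv\{\, v_\iota : \iota \in I/g \,\}$, which is the first assertion. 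For the second sentence, if $g^r$ fixes every vertex then $r v_\iota = \frac{r}{|\iota|}\sum_{k\in\iota} v_k$ is a sum of lattice points divided by $|\iota|$ with $|\iota|$ dividing $r$ — more carefully, $|\iota| \mid r$ and $r v_\iota = \frac{r}{|\iota|}\sum_{k \in \iota} v_k \in M$ since $\frac{r}{|\iota|} \in \Z$; thus $rP_g = \conv\{\, rv_\iota \,\}$ is a lattice polytope (and $\dim P_g = \dim M^g$ by Lemma~\ref{l:dimension}, so in fact $r$ can be taken to be the order of $g$).

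Finally, suppose $P$ is a simplex, so $|I| = d+1$ and the $v_i$ are affinely independent. Affine independence is inherited by any subset, and $\pi_g$ restricted to the affine span of $\{ v_i : i \in I\}$ need not be injective in general, but I claim the points $\{ v_\iota : \iota \in I/g\}$ are affinely independent: indeed $v_\iota = \pi_g(v_i)$ lies in $M^g$, and writing an affine dependence $\sum_\iota c_\iota v_\iota = 0$, $\sum_\iota c_\iota = 0$ and pulling back through $\pi_g$, one gets (using that the $v_i$ are the vertices of a simplex, and that $\pi_g$ is the average over the $g$-action which permutes the $v_i$) an affine dependence among distinct $v_i$'s with not-all-zero coefficients — contradiction; here one spreads the coefficient $c_\iota$ equally among the $v_i$ with $i \in \iota$. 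So $\{ v_\iota\}$ are $|I/g|$ affinely independent points whose convex hull is $P_g$, i.e. $P_g$ is a simplex with these as vertices, and in particular they are distinct. The one genuinely delicate point — and the place I would be most careful — is the translation bookkeeping in the first paragraph: making precise that the averaging operator sends $P$ (viewed appropriately) exactly onto $P_g$ rather than onto a translate of it, since $P$ is only $G$-invariant up to translation in $M$. Passing to $M' = M \oplus \Z$ where the action is genuinely linear, as set up in Section~\ref{s2}, removes this difficulty cleanly.
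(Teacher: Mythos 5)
Your proof is correct. For the first two assertions it is essentially the paper's argument: the paper takes $w\in P_g$, writes $w=\sum_{i\in I}\lambda_i v_i$, and averages over the $g$-action to get $w=\sum_{\iota\in I/g}(\sum_{i\in\iota}\lambda_i)v_\iota$, which is exactly your Reynolds projection $\pi_g$ applied pointwise; your packaging via $P_g=\pi_g(P)=\conv\{\pi_g(v_i)\}$ is the same computation, and your care about the translation issue is resolved just as you say, by the Section~\ref{s2} setup placing $P$ in $M_\R\times 1$ where the $G$-action is honestly linear. Where you genuinely diverge is the simplex statement. The paper does not check affine independence of the $v_\iota$ directly: it identifies $M'_\C$ with the permutation representation of $G$ on the $d+1$ vertices (possible exactly because $P$ is a simplex, so the vertices form a basis of $M'_\C$), deduces that $\dim (M')^g_\C$ equals the number of $g$-orbits of vertices, and invokes Lemma~\ref{l:dimension} to get $\dim P_g+1=|I/g|$, so that $P_g$ is forced to be a simplex on the $|I/g|$ points $v_\iota$. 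Your alternative — spreading an affine dependence $\sum_\iota c_\iota v_\iota=0$, $\sum_\iota c_\iota=0$, into coefficients $c_{\iota}/|\iota|$ on the individual $v_i$ and contradicting their affine independence — is correct and more elementary, avoiding the representation-theoretic dimension count entirely; what the paper's route buys is a reusable fact (the permutation-representation description of $M'_\C$ for a simplex) that reappears verbatim in the proof of Proposition~\ref{p:simplex}.
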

\begin{proof}
Any element $w \in P$ can be written in the form $w = \sum_{i \in I} \lambda_i v_i$ for some $\lambda_i \ge 0$ satisfying $\sum_{i \in I} \lambda_i = 1$. If $w \in P_g$ and 
$g^r$ fixes the vertices of $P$, 
then 
\[
w = \frac{1}{r} \sum_{i = 0}^{r - 1} g^i w = \sum_{\iota \in I / g} \frac{1}{|\iota|} \sum_{i \in \iota} \lambda_i \sum_{i \in \iota} v_i =  \sum_{\iota \in I / g} (\sum_{i \in \iota} \lambda_i) v_\iota. 
\]
Since $\sum_{\iota \in I / g} (\sum_{i \in \iota} \lambda_i) = 1$, this 
proves the first statement. Observing that $|\iota|$ divides $r$, 
we obtain the second statement. 
Finally, if $P$ is a simplex, then the vertices $\{ v_i \mid i \in I \}$ of $P$ 
form a basis of $M'_\C$. Hence we may identify  the representation $M'_\C$ with the permutation representation induced by the action of $G$ on 
 $\{ v_i \mid i \in I \}$. In particular, the dimension of the subspace $(M')^g_\C$ of $M'_\C$ fixed by $g$ is precisely the number of orbits $\iota \in I / g$. By Lemma~\ref{l:dimension}, we conclude that $\dim P_g + 1$ equals the number of $g$-orbits of vertices of $P$,
and the result follows.  
\end{proof}

Recall that $G$ acts on $M$ via $\rho: G \rightarrow GL(M)$. 

\begin{lemma}\label{l:determinant}
With the notation above, the function 
$g \mapsto (-1)^{d - \dim P_g}$ equals the representation $\det(\rho)$. 
\end{lemma}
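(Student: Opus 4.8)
The plan is to translate the statement into a fact about the eigenvalues of $\rho(g)$ and then match two parity computations. First I would invoke Lemma~\ref{l:dimension} to replace $\dim P_g$ by $\dim M^g$, so that it suffices to prove $(-1)^{d - \dim M^g} = \det(\rho(g))$ as functions of $g \in G$. Since both sides are class functions (the left side because $\dim M^g$ depends only on the conjugacy class of $g$, the right side being a genuine $1$-dimensional character), it is enough to check the equality value by value.

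Next, fixing $g$, I would use that $g$ has finite order to diagonalize $\rho(g)$ over $\C$, writing its eigenvalues as roots of unity $\lambda_1, \dots, \lambda_d$. By the description of $\dim M^g$ recalled just before Lemma~\ref{l:dimension}, $d - \dim M^g$ is exactly the number of $\lambda_i$ different from $1$, while $\det(\rho(g)) = \prod_i \lambda_i = \prod_{\lambda_i \neq 1} \lambda_i$. The decisive point is that $\rho(g)$ is an integer matrix, hence real, so its non-real eigenvalues occur in complex-conjugate pairs $\{\lambda, \bar\lambda\}$, each such pair contributing $\lambda\bar\lambda = |\lambda|^2 = 1$ to the product and $2$ to the count of eigenvalues $\neq 1$. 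After cancelling these pairs, the remaining eigenvalues $\neq 1$ are real roots of unity, hence all equal to $-1$; if $k$ denotes their number, then $\det(\rho(g)) = (-1)^k$, while $d - \dim M^g \equiv k \pmod 2$ since we discarded an even number of eigenvalues. Therefore $(-1)^{d - \dim M^g} = (-1)^k = \det(\rho(g))$, and the two characters agree.

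I do not anticipate a genuine obstacle here: the argument is elementary linear algebra over $\C$. The only steps needing a word of care are the observation that a real root of unity must equal $\pm 1$, and that the conjugate-pair count is even — both immediate — together with noting that $\det(\rho)$ really is the character being described, i.e. that $\det(\rho(g)) \in \{\pm 1\}$, which in any case follows from $\rho(g) \in \GL_d(\Z)$ (or directly from the eigenvalue computation above).
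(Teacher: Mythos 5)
Your proof is correct and is essentially the paper's argument: both reduce to counting eigenvalues of $\rho(g)$ via Lemma~\ref{l:dimension}, pair off the non-real roots of unity as conjugate pairs contributing $+1$ to the determinant and an even amount to the count, and conclude that the parity is carried entirely by the $-1$ eigenvalues. The only cosmetic difference is that the paper reads this off from the constant term of the characteristic polynomial $\det(tI-A)=(t-1)^a(t+1)^b\prod_\zeta(t-\zeta)(t-\bar\zeta)$ rather than from the product of eigenvalues directly.
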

\begin{proof}
If $g$ acts on $M$ via an integer-valued matrix $A$, then 
the eigenvalues of $A$ are roots of unity, and hence 
\[
\det(tI - A) = (t - 1)^a (t + 1)^b \prod_\zeta (t - \zeta)(t - \bar{\zeta}), 
\]
for some complex roots of unity $\zeta$. Comparing constant terms on both sides yields 
$(-1)^d \det(\rho(g)) = (-1)^a$. The result now follows from Lemma~\ref{l:dimension}. 

\excise{
$\det(tI - A) \in \Z[t]$ of $A$ has constant term $(-1)^d \det(\rho(g))$. Moreover, since $g$ has finite order, each irreducible factor of $\det(tI - A)$ divides the polynomial $t^N - 1$ for some positive integer $N$. The polynomial $t^N - 1$ has a unique 
decomposition in $\Z[t]$
\begin{equation}\label{e:cyclotomic}
t^N - 1 = \coprod_{l | N} \Phi_l(t),
\end{equation}
where $\Phi_l(t)$ is the monic polynomial in $\Z[t]$ with complex roots (each with multiplicity $1$) given by the primitive $l^{\textrm{th}}$ roots of unity.  The polynomial $\Phi_l(t)$ is called the  $l^{\textrm{th}}$ \emph{cyclotomic polynomial} and it is irreducible in $\Z[t]$ \cite[Section~53]{VanModern}. We deduce that 
every irreducible factor of $\det(tI - A)$ in $\Z[t]$ is a cyclotomic polynomial. Moreover, it follows by induction on $l$ and \eqref{e:cyclotomic}, that  $\Phi_l(0) = -1$ if $l = 1$ and $\Phi_l(0) = 1$ if $l > 1$. Hence the constant term in $\det(tI - A)$ is equal to $(-1)^a$, where $a$ is the multiplicity of 
$(t - 1)$ in $\det(tI - A)$. Hence $a = (-1)^d \det(\rho(g))$ and the result
follows from Lemma~\ref{l:dimension}. 
}
\end{proof}

\begin{remark}
In fact, the above proof holds provided that $\rho: G \rightarrow GL(M_\R)$ is a real representation. 
\end{remark}

We are now ready to prove an equivariant analogue of Ehrhart's results in \cite{EhrLinearI}. 
Recall that the \emph{exponent} of $G$ is the smallest positive integer $N$ such that $g^N = 1$ for all $g \in G$,   
and  that  the \emph{denominator} of a rational polytope $Q$ is the smallest positive integer $m$ such that $mQ$ is a lattice polytope.

\begin{theorem}\label{t:combinatorics}
Consider the function $L(m) = \chi_{mP} \in R(G)$ for any non-negative integer $m$. 
Then $L(m)$ is a  quasi-polynomial in $m$ of degree $d$ and period dividing the  exponent of $G$, 
and,
for any positive integer $m$,
$(-1)^{d} L(-m) = \chi^*_{mP} \cdot \det(\rho)$.  
\end{theorem}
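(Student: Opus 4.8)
The plan is to reduce everything to the classical, non-equivariant statements by evaluating characters pointwise, using Lemma~\ref{l:character}. For a fixed $g \in G$, the value $L(m)(g) = \chi_{mP}(g)$ equals $f_{P_g}(m)$, the Ehrhart quasi-polynomial of the rational polytope $P_g$, which by Lemma~\ref{l:vertices} has denominator dividing any $r$ with $g^r$ fixing the vertices of $P$ — in particular dividing the exponent $N$ of $G$. By Ehrhart's theorem (recalled in Section~\ref{s:basicEhrhart}), $f_{P_g}(m)$ is therefore a quasi-polynomial in $m$ of degree $\dim P_g$ with period dividing $N$. Since $R(G) \otimes_\Z \C \cong \C_{\class}(G)$ and a class function is a $\C$-valued (quasi-)polynomial in $m$ if and only if each of its pointwise values is, it follows immediately that $L(m)$ is a quasi-polynomial in $R(G)$ with period dividing $N$. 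For the degree: the pointwise degrees are $\dim P_g \le \dim P_1 = d$, with equality at $g = 1$, so the leading term of $L(m)$ does not vanish identically, giving $\deg L(m) = d$. One should be slightly careful here: the degree of a quasi-polynomial with values in $R(G)$ is the degree of the leading \emph{nonzero} coefficient, and since $L_d(1)(1) = \vol P \neq 0$ this coefficient is a genuine nonzero element of $R(G)$.

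For the reciprocity statement, again work pointwise. For each $g$, classical Ehrhart reciprocity applied to the rational polytope $P_g$ gives
\[
(-1)^{\dim P_g} f_{P_g}(-m) = f^\circ_{P_g}(m)
\]
for every positive integer $m$ (here $f_{P_g}(-m)$ means the quasi-polynomial $f_{P_g}$ evaluated at $-m$ in the appropriate residue class). Now I rewrite the sign: by Lemma~\ref{l:determinant}, $(-1)^{d - \dim P_g} = \det(\rho)(g)$, so $(-1)^{\dim P_g} = (-1)^d \det(\rho)(g)$. Substituting,
\[
(-1)^d \det(\rho)(g)\, f_{P_g}(-m) = f^\circ_{P_g}(m),
\]
hence $(-1)^d f_{P_g}(-m) = \det(\rho)(g)^{-1} f^\circ_{P_g}(m) = \det(\rho)(g)\, f^\circ_{P_g}(m)$, using that $\det(\rho)(g) = \pm 1$ is its own inverse. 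By Lemma~\ref{l:character}, $f^\circ_{P_g}(m) = \chi^*_{mP}(g)$, so the last identity reads $(-1)^d L(-m)(g) = (\chi^*_{mP} \cdot \det(\rho))(g)$ for every $g \in G$. Since two virtual characters agreeing at every $g$ are equal in $R(G)$, we conclude $(-1)^d L(-m) = \chi^*_{mP} \cdot \det(\rho)$.

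The one genuinely delicate point — and the step I expect to require the most care — is the meaning of $L(-m)$ as an element of $R(G)$ and the compatibility of "evaluate at $-m$" with "evaluate the character at $g$." A quasi-polynomial $L$ with coefficients in $R(G)$ is, for each residue class $i \bmod N$, a polynomial $L^{(i)}(m) \in R(G)[m]$; its value at a negative integer $-m$ is defined by plugging $-m$ into the polynomial attached to the residue class of $-m \bmod N$. I need to check that the pointwise value $L^{(i)}(m)(g)$ is exactly the polynomial $g_j(m)$ (in the notation of Section~\ref{s:basicEhrhart}) governing $f_{P_g}$ on the relevant residue class — this is automatic because a polynomial in $R(G)[m]$ agreeing with $L(m)$ for all large $m \equiv i$ has, at each $g$, a coefficient sequence agreeing with that of $f_{P_g}$ for all large $m \equiv i$, hence coincides with $g_{i'}(m)$. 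Once this bookkeeping is in place, the argument above goes through verbatim, and the remaining assertions ($L(0) = 1$, which holds because $\chi_{0\cdot P}$ is by definition the trivial character) are immediate.
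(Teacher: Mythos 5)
Your proposal is correct and follows essentially the same route as the paper: reduce pointwise via Lemma~\ref{l:character} to the Ehrhart quasi-polynomials of the rational polytopes $P_g$, invoke Lemma~\ref{l:vertices} to bound the denominator (hence the period) by the exponent of $G$, apply classical Ehrhart reciprocity at each $g$, and convert the sign $(-1)^{d-\dim P_g}$ into $\det(\rho)(g)$ via Lemma~\ref{l:determinant}. The extra care you take with the degree claim and with the meaning of evaluating an $R(G)$-valued quasi-polynomial at negative integers is sound but not a different argument; the paper's proof is simply a terser version of the same computation.
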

\begin{proof}
By Lemma~\ref{l:character}, $\chi_{mP}(g) = f_{P_g}(m)$ for any non-negative integer $m$, and, by Lemma~\ref{l:vertices} and Ehrhart's results (Section~\ref{s:basicEhrhart}), $f_{P_g}(m)$ is a quasi-polynomial of degree $\dim P_g$ with period dividing the exponent of $G$. Hence, by Ehrhart reciprocity and Lemma~\ref{l:character}, the character of 
$(-1)^{d} L(-m)$ evaluated at $g$ equals 
\[ (-1)^{d - \dim P_g} (-1)^{\dim P_g} f_{P_g}(-m) = (-1)^{d - \dim P_g} f_{P_g}^\circ(m) = (-1)^{d - \dim P_g} \chi_{mP}^*(g).
\]
 The result now follows from
 Lemma~\ref{l:determinant}.
\end{proof}

For any positive integer $m$, Lemma~\ref{l:permutation} implies that  $f_{P/G}(m) =  \left\langle \chi_{mP}, 1 \right\rangle$ (respectively $f_{P/G}^\circ(m)  = \left\langle \chi^*_{mP}, 1 \right\rangle$) equals the number of $G$-orbits of $mP \cap M$ (respectively $\Int(mP) \cap M$). Similarly, $\tilde{f}_{P/G}(m) = \left\langle \chi_{mP}, \det(\rho) \right\rangle$ (respectively $\tilde{f}_{P/G}^\circ(m)  = \left\langle \chi^*_{mP}, \det(\rho) \right\rangle$) equals the number of $G$-orbits of $mP \cap M$ (respectively $\Int(mP) \cap M$)  whose isotropy subgroup is contained in $\{ g \in G \mid \det(\rho(g)) = 1 \}$.

\begin{corollary}\label{c:orbits}
With the notation above, $f_{P/G}(m)$ and $\tilde{f}_{P/G}(m)$ are  quasi-polynomials in $m$ of degree $d$, with leading coefficient $\frac{\vol P}{|G|}$ and period dividing the  exponent of $G$. Moreover, $f_{P/G}(0) = \tilde{f}_{P/G}(0) = 1$, and 
$(-1)^{d} f_{P/G}(-m) =  \tilde{f}_{P/G}^\circ(m)$ and $(-1)^{d} \tilde{f}_{P/G}(-m) =  f^\circ_{P/G}(m)$ for any positive integer $m$. 
\end{corollary}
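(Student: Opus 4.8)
The plan is to obtain the corollary from Theorem~\ref{t:combinatorics} by applying the two additive functionals $\langle\,\cdot\,,1\rangle$ and $\langle\,\cdot\,,\det(\rho)\rangle$ on $R(G)$, together with the dictionary recalled just before the statement: by Lemma~\ref{l:permutation} we have $f_{P/G}(m) = \langle\chi_{mP},1\rangle$, $\tilde f_{P/G}(m) = \langle\chi_{mP},\det(\rho)\rangle$, and likewise $f^\circ_{P/G}(m) = \langle\chi^*_{mP},1\rangle$ and $\tilde f^\circ_{P/G}(m) = \langle\chi^*_{mP},\det(\rho)\rangle$ for $m\ge 1$. Since Theorem~\ref{t:combinatorics} gives $L(m) = \chi_{mP} = \sum_{i=0}^d L_i(m)m^i$ with each $L_i(m)$ a periodic function of $m$ of period dividing $\exp(G)$, applying a fixed functional to each coefficient immediately shows that $f_{P/G}(m)$ and $\tilde f_{P/G}(m)$ are quasi-polynomials in $m$ of degree at most $d$ and period dividing $\exp(G)$; and since $L(0)$ is the trivial character, evaluating at $m = 0$ gives $f_{P/G}(0) = \tilde f_{P/G}(0) = 1$.

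To pin down the degree and the leading coefficient, I would instead use Lemma~\ref{l:character} to rewrite the pairings as $f_{P/G}(m) = \frac{1}{|G|}\sum_{g\in G} f_{P_g}(m)$ and $\tilde f_{P/G}(m) = \frac{1}{|G|}\sum_{g\in G}\det(\rho(g))\,f_{P_g}(m)$, using that $\det(\rho(g)) = \pm 1$ is real. By Lemma~\ref{l:vertices} and Ehrhart's theorem each $f_{P_g}(m)$ is a quasi-polynomial of degree $\dim P_g$, and by Lemma~\ref{l:dimension} we have $\dim P_g = \dim M^g$, which equals $d$ precisely when $g$ acts as the identity on $M$, i.e.\ when $g = 1$. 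Hence only the term $g = 1$ contributes to the coefficient of $m^d$; since $P$ is a $d$-dimensional lattice polytope in the rank-$d$ lattice $M$, that term is $f_P(m)$, whose top coefficient is $\vol P$. Therefore $f_{P/G}(m)$ and $\tilde f_{P/G}(m)$ both have leading coefficient $\frac{\vol P}{|G|}$ (using $\det(\rho(1)) = 1$), which is nonzero, so both quasi-polynomials have degree exactly $d$.

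For the reciprocity relations I would feed the identity $(-1)^d L(-m) = \chi^*_{mP}\cdot\det(\rho)$ of Theorem~\ref{t:combinatorics} into the same two functionals. Pairing with $1$ gives $(-1)^d f_{P/G}(-m) = \langle\chi^*_{mP}\det(\rho),1\rangle = \langle\chi^*_{mP},\overline{\det(\rho)}\rangle = \langle\chi^*_{mP},\det(\rho)\rangle = \tilde f^\circ_{P/G}(m)$, where the middle equality is the adjunction $\langle\alpha\lambda,\beta\rangle = \langle\alpha,\overline{\lambda}\beta\rangle$ for a one-dimensional character $\lambda$, and the last equality uses that $\det(\rho)$ is real. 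Pairing with $\det(\rho)$ gives $(-1)^d\tilde f_{P/G}(-m) = \langle\chi^*_{mP}\det(\rho),\det(\rho)\rangle = \langle\chi^*_{mP},|\det(\rho)|^2\rangle = \langle\chi^*_{mP},1\rangle = f^\circ_{P/G}(m)$, again because $|\det(\rho(g))| = 1$ for all $g$.

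Essentially all of the substance is already carried by Theorem~\ref{t:combinatorics}; the one step that requires a little care is the leading-coefficient computation, where the point is that $\dim P_g < d$ for every $g\neq 1$, so that only the identity contributes the top-degree term — exactly what Lemmas~\ref{l:dimension} and~\ref{l:vertices} deliver — while the reciprocity step collapses cleanly precisely because $\det(\rho)$ is a real character.
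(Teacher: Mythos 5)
Your proof is correct and follows essentially the same route as the paper: apply the functionals $\left\langle \cdot, 1 \right\rangle$ and $\left\langle \cdot, \det(\rho) \right\rangle$ to Theorem~\ref{t:combinatorics}, and use $\left\langle \chi^*_{mP}\cdot\det(\rho), 1 \right\rangle = \left\langle \chi^*_{mP}, \det(\rho) \right\rangle$ (and its analogue with $\det(\rho)^2 = 1$) for the reciprocity statements. The only cosmetic difference is that for the leading coefficient the paper simply cites Corollary~\ref{c:leading} ($L_d = \frac{\vol P}{|G|}\chi_{\st}$, paired with $1$ and $\det(\rho)$), whereas you inline the same computation by observing that only $g = 1$ contributes a degree-$d$ term to $\frac{1}{|G|}\sum_{g} f_{P_g}(m)$.
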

\begin{proof}
We apply Theorem~\ref{t:combinatorics} to the inner products $ \left\langle \chi_{mP}, 1 \right\rangle$ and  $\left\langle \chi_{mP}, \det(\rho) \right\rangle$, using the fact that $\chi_{mP}$ is the trivial representation when $m = 0$, and the fact that 
$\left\langle \chi^*_{mP} \cdot \det(\rho), 1 \right\rangle = \left\langle \chi^*_{mP} , \det(\rho) \right\rangle$. 
The statement about the leading coefficients of $f_{P/G}(m)$ and $\tilde{f}_{P/G}(m)$ follows from Corollary~\ref{c:leading} below. 
\end{proof}

With the notation of Theorem~\ref{t:combinatorics}, we may write 
\[
L(m) = L_d(m) m^d + L_{d - 1}(m) m^{d - 1} + \cdots + L_0(m),
\]
where $L_i(m) \in R(G)$ is a periodic function in $m$ with period dividing the exponent of $G$. Observe that $L_0(0)$ is the trivial representation. 
Below we give an explicit description of the two leading terms of this quasi-polynomial. Recall that the \emph{standard representation} $\chi_{\st}$ of $G$ is the permutation representation induced by the action of $G$ on itself by left multiplication. Since $g \ne 1$ has no fixed points with respect to this action, the character of $\chi_{\st}$ is given by 
\begin{equation}\label{e:standard}
\chi_{\st}(g) =  \left\{\begin{array}{cl} 
|G| & \text{if } g = 1 
 \\ 0 & \text{otherwise}. \end{array}\right. 
\end{equation}
It is a standard fact that if $\{ \chi_1, \ldots, \chi_r \}$ denote the irreducible representations of $G$, then 
$\chi_{\st} = \sum_{i = 1}^r \chi_i(1) \chi_i$. 

\begin{corollary}\label{c:leading}
With the notation above, the leading coefficient $L_d(m) = L_d$ is independent of $m$ and given by
$L_d = \frac{\vol P}{|G|} \chi_{\st}$. In particular, the multiplicity of a fixed irreducible representation $\chi$ in $\chi_{mP}$ is a quasi-polynomial in $m$ of degree $d$ with leading coefficient $\frac{\chi(1)\vol P}{|G|}$.  
\end{corollary}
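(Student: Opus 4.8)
The plan is to compute $L_d(m)$ by evaluating characters at each $g\in G$ and applying the classical Ehrhart results of Section~\ref{s:basicEhrhart} pointwise. Write $L(m)=\sum_{i=0}^{d}L_i(m)m^i$. By Lemma~\ref{l:character}, for every $g\in G$ the value $L(m)(g)=\chi_{mP}(g)$ equals $f_{P_g}(m)$, the Ehrhart quasi-polynomial of the rational polytope $P_g$, which is a quasi-polynomial in $m$ of degree $\dim P_g=\dim M^g$ by Lemma~\ref{l:dimension} and Ehrhart's theorem. So, under the identification $R(G)\otimes_{\Z}\C\cong\C_{\class}(G)$, it suffices to determine the degree-$d$ coefficient of $f_{P_g}(m)$ for each $g$.

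The key dichotomy is between $g=1$ and $g\neq1$. For $g=1$ we have $P_1=P$, so $f_{P_1}(m)=f_P(m)$ is an honest polynomial of degree $d$ with leading coefficient $\vol P$, whence $L_d(m)(1)=\vol P$ for all $m$. For $g\neq1$ the matrix $\rho(g)$ has finite order and, by faithfulness of the $G$-action, is not the identity, so its $1$-eigenspace $M^g$ has dimension $<d$; hence $\dim P_g<d$, the coefficient of $m^d$ in $f_{P_g}(m)$ is zero, and $L_d(m)(g)=0$ for all $m$. Therefore $L_d(m)$ is independent of $m$; writing it $L_d$, its character sends $1\mapsto\vol P$ and $g\mapsto0$ for $g\neq1$, which by \eqref{e:standard} is precisely $\tfrac{\vol P}{|G|}\chi_{\st}$.

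For the final assertion, fix an irreducible representation $\chi$. Its multiplicity in $\chi_{mP}$ is $\langle\chi_{mP},\chi\rangle=\sum_{i=0}^{d}\langle L_i(m),\chi\rangle m^i$, a quasi-polynomial in $m$ whose coefficient of $m^d$ equals $\langle L_d,\chi\rangle=\tfrac{\vol P}{|G|}\langle\chi_{\st},\chi\rangle=\tfrac{\chi(1)\vol P}{|G|}$, using $\chi_{\st}=\sum_j\chi_j(1)\chi_j$. As this is nonzero, the degree is exactly $d$. The argument is essentially routine once the pointwise identity $\chi_{mP}(g)=f_{P_g}(m)$ is available; the only step that deserves attention is the claim that a nontrivial $g$ fixes only a proper subspace of $M_\R$ — so that its Ehrhart quasi-polynomial contributes nothing in degree $d$ — together with the bookkeeping between the leading coefficient of $f_{P_g}$ regarded as a quasi-polynomial of degree $\dim P_g$ and as one of nominal degree $d$.
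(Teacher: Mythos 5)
Your proof is correct and follows essentially the same route as the paper's: evaluate $L(m)$ at each $g$ via Lemma~\ref{l:character}, observe that $f_{P_g}(m)$ is a quasi-polynomial of degree $\dim P_g < d$ for $g \neq 1$ while for $g = 1$ its leading term is $\vol(P)m^d$, and then identify the resulting class function with $\frac{\vol P}{|G|}\chi_{\st}$ via \eqref{e:standard}, finishing with the inner product against $\chi$ exactly as the paper does. The only (cosmetic) difference is that you invoke faithfulness of $\rho$ explicitly to conclude $\dim M^g < d$ for $g \neq 1$; the paper's proof relies on the same fact tacitly when it asserts the degree drops for $g \neq 1$.
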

\begin{proof}
By Lemma~\ref{l:character}, $\chi_{mP}(g) = f_{P_g}(m)$ is a quasi-polynomial of degree strictly less than $d$ unless $g = 1$, in which case, the leading term is $\vol(P)m^d$ (see Section~\ref{s:basicEhrhart}). The first statement now follows from \eqref{e:standard}. The second statement is immediate from the fact that the multiplicity $\left\langle \chi_{\st}, \chi \right\rangle$ of an irreducible representation $\chi$ in $\chi_{\st}$ is equal to its dimension $\chi(1)$.   
\end{proof}

\begin{remark}\label{r:howe}
If $C$ denotes the cone over $P$, then $\chi_{mP}$ may be viewed as the representation of $G$ on the $m^\mth$ graded piece of the semi-group algebra $R = \C[ C \cap M']$. 
From this perspective, the above corollary may be viewed as a special case of the results of Howe in \cite{HowAsymptotics}. We refer the reader to the work of Paoletti \cite[Theorem~1]{PaoAsymptotic} for a similar result on equivariant volumes of big line bundles on smooth, projective complex varieties. 
\end{remark}

For any $g \in G$, recall that the \emph{index} $\ind(P_g)$ of $P_g$ is the smallest positive integer $m$ such that the affine span of $m P_g$ contains a lattice point. By Lemma~\ref{l:vertices}, the index of $P_g$ divides the order of $g$. 
If $g$ acts on $M$ via the integer-valued matrix $A$, then $g$ is a \define{reflection} if all but one of the eigenvalues of $A$ is equal to $1$ (the other eigenvalue is necessarily $-1$).  
By Lemma~\ref{l:dimension}, $g$ is a reflection if and only if $\dim P_g = d - 1$.
Let $s(P)$ denote the (normalized) surface area of $P$ (see Section~\ref{s:basicEhrhart}). 

\begin{corollary}\label{c:second}
With the notation above, the second leading coefficient $L_{d - 1}(m)$ is periodic in $m$ with period dividing $2$, and is given by the (virtual) character
\[ 
L_{d - 1}(m)(g) =  \left\{\begin{array}{cl} 
\frac{s(P)}{2} & \text{if } g = 1 
 \\ \vol{P_g} & \text{if } g \textrm{ is a reflection and } m \equiv 0  \mod \ind(P_g)
   \\ 0 & \text{otherwise}.  
 \end{array}\right. 
\]
In particular, $L_{d - 1}(m)$ is independent of $m$ if and only if every reflection fixes a point in $M \times 1$. 
\end{corollary}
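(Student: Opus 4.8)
The plan is to evaluate the virtual character $L_{d-1}(m)$ at each $g \in G$ using Lemma~\ref{l:character}, which gives $\chi_{mP}(g) = f_{P_g}(m)$, together with Ehrhart's structure theory for the rational polytope $P_g$ recalled in Section~\ref{s:basicEhrhart}. Write the Ehrhart quasi-polynomial of $P_g$ as $f_{P_g}(m) = c_{\delta}(m)m^{\delta} + c_{\delta-1}(m)m^{\delta-1} + \cdots$, where $\delta = \dim P_g = \dim M^g$ by Lemma~\ref{l:dimension}. The coefficient $L_{d-1}(m)(g)$ of $m^{d-1}$ in $\chi_{mP}(g)$ is then nonzero only when $\delta \ge d-1$, i.e. when $g$ is either the identity ($\delta = d$) or a reflection ($\delta = d-1$), by the characterization of reflections just before the corollary. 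So first I would dispose of all other $g$ immediately: they contribute $0$.

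For $g = 1$: here $P_1 = P$ is a lattice polytope of full dimension $d$, so $f_P(m)$ is an honest polynomial and its $m^{d-1}$-coefficient is the constant $c_{d-1}$, which by the discussion in Section~\ref{s:basicEhrhart} equals half the normalized surface area, $s(P)/2$. This gives the first case. For $g$ a reflection: $P_g$ is a $(d-1)$-dimensional rational polytope, so $f_{P_g}(m)$ is a quasi-polynomial of degree $d-1$, and its top coefficient $c_{d-1}(m)$ is exactly what we want. By \eqref{e:dindex}, $c_{d-1}(m) = \vol(P_g)$ when $\ind(P_g) \mid m$ and $0$ otherwise. By Lemma~\ref{l:vertices}, $\ind(P_g)$ divides the order of $g$ (indeed for a reflection $g$ we have $g^2$ fixing the relevant averaged vertices, so $\ind(P_g) \mid 2$), which also shows the claimed period of $L_{d-1}(m)$ divides $2$: for $g=1$ the value is constant, for reflections it is periodic mod $\ind(P_g) \mid 2$, and for all other $g$ it is identically zero.

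Finally, for the "in particular" clause: $L_{d-1}(m)$ is independent of $m$ precisely when, for every reflection $g$, the value $\vol(P_g)\cdot[\ind(P_g)\mid m]$ does not actually depend on $m$, which happens iff $\ind(P_g) = 1$ for every reflection $g$, i.e. iff the affine span of $P_g$ inside $M_{\R}\times 1$ contains a lattice point. Since $P_g = P \cap (M')^g$ and $\dim P_g = d-1$, the affine span of $P_g$ is all of $(M')^g \cap (M_{\R}\times 1)$; containing a lattice point of $M'$ is the same as saying $g$ fixes a point of $M \times 1$. So the condition is exactly that every reflection fixes a point in $M \times 1$.

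The main obstacle is bookkeeping rather than conceptual: one must be careful that $L_{d-1}(m)(g)$ really is the $m^{d-1}$-coefficient of $f_{P_g}(m)$ and not contaminated by lower-degree terms when $\dim P_g < d-1$ (it isn't, since then $\deg f_{P_g} < d-1$), and one must correctly invoke \eqref{e:dindex} with $d$ replaced by $\dim P_g = d-1$ and the lattice taken to be $\aff(P_g)\cap M'$. The identification $\ind(P_g)=1 \iff g$ fixes a lattice point of $M\times 1$ also needs the observation from the proof of Lemma~\ref{l:dimension} that $(M')^g$ meets $M_{\R}\times 1$ in a genuine affine subspace of the right dimension, so that "affine span of $P_g$" and "$(M')^g \cap (M_{\R}\times 1)$" coincide.
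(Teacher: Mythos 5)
Your proof is correct and follows essentially the same route as the paper's: reduce to the Ehrhart quasi-polynomials $f_{P_g}(m)$ via Lemma~\ref{l:character}, note that only $g=1$ and reflections contribute to the $m^{d-1}$ coefficient, apply \eqref{e:dindex} together with $\ind(P_g)\in\{1,2\}$ (from $g^2=1$ and Lemma~\ref{l:vertices}), and identify $\ind(P_g)=1$ with $g$ fixing a point of $M\times 1$. The only difference is that you spell out the bookkeeping (e.g.\ that $\aff(P_g)=(M')^g\cap(M_\R\times 1)$) that the paper leaves implicit.
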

\begin{proof}
By Lemma~\ref{l:character} and the previous discussion, $\chi_{mP}(g) = f_{P_g}(m)$ is a quasi-polynomial of degree strictly less than $d - 1$ unless $g = 1$ or $g$ is a reflection. In the latter case, $g^2 = 1$ and Lemma~\ref{l:vertices} implies that $2P_g$ is a lattice polytope. 
Hence, $\ind(P_g) \in \{ 1, 2 \}$, and 
the first two statements now follow from basic properties of Ehrhart quasi-polynomials (see \eqref{e:dindex} in Section~\ref{s:basicEhrhart} and the surrounding discussion).  
The final statement follows from the observation that if $g$ is a reflection, then $\ind(P_g) = 1$ if and only if 
$g$ fixes a point in $M \times 1$. 
\end{proof}

\begin{remark}\label{r:second}
Recall from the introduction that the generating series of $L(m)$ can be written in the form 
\[
\sum_{m \ge 0} L(m) t^m = \frac{ \phi[t]}
{(1- t)\det(I - \rho t)},
\]
for some power series $\phi[t] \in R(G)[[t]]$. 
It follows from Corollary~\ref{c:second} and Lemma~\ref{l:bad} below that if $\phi[t]$ is a polynomial, then  $L_{d - 1} = L_{d - 1}(m)$ is independent of $m$. 
\end{remark}



\excise{
\begin{remark}
 For any positive integer $m$, one may use Serre duality to show that we have isomorphisms of representations
 \[
(-1)^d \chi(Y, L^{\otimes - m}) =
H^d(Y, L^{\otimes - m}) \cong H^0(Y, \mathcal{O}(K_Y) \otimes  L^{\otimes  m}) \otimes \bigwedge^d M_\C, 
\] 
where we may identify $H^0(Y, \mathcal{O}(K_Y) \otimes  L^{\otimes  m})$ with $L^*(mP)$.
Since  $\chi(Y, L^{\otimes m}) = L(mP)$ for each non-negative integer $m$,  Theorem~\ref{t:combinatorics} implies that $\chi(Y, L^{\otimes m})$ is a quasi-polynomial in $m$ (for all integers $m$) with period dividing the exponent of $G$. 
 
 Careful: need $G$ acts trivially on $H^d(Y,  \mathcal{O}(K_Y) )$. 
 Is this a general fact?
\end{remark}
}

\section{An equivariant analogue of the $h^*$-polynomial}\label{s:hstar}

The goal of this section is to study the power series $\phi[t]$ of virtual representations  introduced in the introduction, that  may be viewed as an 
equivariant analogue of the $h^*$-polynomial of a lattice polytope.


We will continue with the notation of Section~\ref{s2} and Section~\ref{s:Ehrhart}. 
That is, $G$ acts linearly on the  lattice $M' = M \oplus \Z$ of rank $d + 1$, and $P \subseteq M_\R \times  1$ is a $G$-invariant, $d$-dimensional lattice polytope. 
 If $R(G)$ denotes the 
 representation ring of $G$ and $\rho: G \rightarrow GL(M)$, then we may 
write
\begin{equation*}\label{e:hstar}
\sum_{m \ge 0} \chi_{mP} t^m = \frac{ \phi[t]}
{(1- t)\det(I - \rho t)}, 
\end{equation*}
for some power series  $\phi[t] = \phi_{P,G}[t] = \sum_{i \ge 0} \phi_i t^i \in R(G)[[t]]$, where, by Lemma~\ref{l:exterior},  
\[
\det(I - \rho t) = 1 - M_\C t + \wedge^2 M_\C t^2 - \cdots + (-1)^d \wedge^d M_\C t^d.  
\]

We first give an explicit description of $\phi[t]$ when $P$ is a simplex. 
 Recall that $P$ is a \emph{simplex} if it has precisely $d + 1$ vertices $\{ v_0, \ldots, v_d \}$. In this case, 
  we define 
 \[
 \BOX(P) = \{ v \in M' \mid  v = \sum_{i = 0}^d a_i v_i \textrm{ for some } 0 \le a_i < 1 \}, 
 \]
 and let $\BOX(P)^g$ denote the elements of $\BOX(P)$ fixed by $g \in G$. 
Let $u: M' = M \oplus \Z \rightarrow \Z$ denote projection onto the second coordinate. 

\begin{proposition}\label{p:simplex}
With the notation above, if $P$ is a simplex, then $\phi_i$ is the permutation representation induced by the action of $G$ on $\{ v \in  \BOX(P)  \mid u(v) = i \}$. In particular, 
\[
\sum_{m \ge 0} f_{P_g}(m) t^m = \frac{\sum_{v \in \BOX(P)^g}   t^{u(v)} }{(1 - t)\det(I - \rho(g)t)},
\]
and the multiplicity of the trivial representation (respectively $\det(\rho)$) in $\phi_i$ equals the number of $G$-orbits of 
$\{ v \in  \BOX(P)  \mid u(v) = i \}$ (respectively  the number of $G$-orbits of 
$\{ v \in  \BOX(P)  \mid u(v) = i \}$ whose isotropy subgroup is contained in $\{ g \in G \mid \det(\rho(g)) = 1 \}$).
\end{proposition}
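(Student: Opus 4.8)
The plan is to exploit the standard fact that for a lattice simplex $P$ with vertices $v_0,\dots,v_d$, the cone $C$ over $P$ (inside $M'_\R$, with $P$ sitting at height $1$) admits the disjoint decomposition $C \cap M' = \bigsqcup_{v \in \BOX(P)} \left( v + \sum_{i=0}^d \Z_{\ge 0} v_i \right)$. Here $\sum \Z_{\ge 0} v_i$ is the free monoid on the (possibly non-unimodular) generators $v_i$, which realizes the lattice points in the cone over the ``basic'' simplex; and the half-open box $\BOX(P)$ is the set of coset representatives. The key point is that this decomposition is $G$-equivariant: $G$ permutes the vertices $v_i$, hence acts on $\BOX(P)$ and on the free monoid generated by the $v_i$, and the bijection above intertwines these actions.

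First I would record that equivariant decomposition explicitly, noting that an element $g \in G$ sends the piece indexed by $v$ to the piece indexed by $gv$, so $\BOX(P)^g$ indexes the $g$-fixed pieces, and within a $g$-fixed piece $g$ acts on $v + \sum \Z_{\ge 0} v_i$ by translating its ``free monoid part'' according to how $g$ permutes $\{v_0,\dots,v_d\}$. Passing to the degree-$m$ graded piece (lattice points at height $m$, i.e. in $mP$), this gives an isomorphism of $G$-representations $\chi_{mP} \cong \bigoplus_{v \in \BOX(P),\, u(v) \le m} \psi_{m - u(v)}$, where $\psi_k$ is the permutation representation on height-$k$ lattice points of $\sum \Z_{\ge 0} v_i$ — equivalently on degree-$k$ monomials in $d+1$ variables permuted by $G$, i.e. $\psi_k = \Sym^k(M'_\C)$ using that the $v_i$ form a $\C$-basis of $M'_\C$ on which $G$ acts by the permutation representation (this identification was already made in the proof of Lemma~\ref{l:vertices}). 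Summing the generating series and applying Lemma~\ref{l:exterior} to $M'_\C \cong M_\C \oplus \C$, which yields $\sum_{k \ge 0}\Sym^k(M'_\C) t^k = \frac{1}{(1-t)\det(I - \rho t)}$, gives
\[
\sum_{m \ge 0} \chi_{mP}\, t^m = \frac{\sum_{v \in \BOX(P)} t^{u(v)}}{(1-t)\det(I - \rho t)}.
\]
Comparing with the defining equation for $\phi[t]$ identifies $\phi_i$ with the permutation representation on $\{v \in \BOX(P) \mid u(v) = i\}$, proving the first (already-stated) sentence of the proposition.

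For the \emph{final statement}, I would simply evaluate characters at a fixed $g \in G$ and invoke Lemma~\ref{l:permutation}. Evaluating the displayed identity at $g$ and using Lemma~\ref{l:character} ($\chi_{mP}(g) = f_{P_g}(m)$) immediately gives the first claimed formula $\sum_{m \ge 0} f_{P_g}(m)\,t^m = \frac{\sum_{v \in \BOX(P)^g} t^{u(v)}}{(1-t)\det(I - \rho(g)t)}$, since $\chi_{mP}(g)$ counts $g$-fixed lattice points in $mP$ and the fixed boxes are exactly those indexed by $\BOX(P)^g$ (and $\det(I - \rho t)(g) = \det(I - \rho(g)t)$ by Lemma~\ref{l:exterior}). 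For the multiplicity statement, apply Lemma~\ref{l:permutation} to the permutation representation $\phi_i$ on the set $S_i = \{v \in \BOX(P) \mid u(v) = i\}$: the multiplicity of the trivial character is $\langle \phi_i, 1\rangle$, which equals the number of $G$-orbits of $S_i$; and taking $\lambda = \det(\rho)$ (a $1$-dimensional representation), the multiplicity of $\det(\rho)$ in $\phi_i$ equals the number of $G$-orbits of $S_i$ whose isotropy subgroup lies in $\lambda^{-1}(1) = \{g \in G \mid \det(\rho(g)) = 1\}$, exactly as asserted.

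The only real obstacle is establishing the $G$-equivariance of the half-open decomposition of the cone cleanly — in particular being careful that the $v_i$ are the actual (possibly primitive-but-not-unimodular) vertices at height $1$, so that $\sum \Z_{\ge 0} v_i$ really is a \emph{free} commutative monoid on $d+1$ generators and its graded pieces are genuinely $\Sym^k$ of the permutation module $M'_\C$; once that is set up, everything else is formal manipulation of generating functions plus Lemma~\ref{l:permutation}. Everything after the decomposition is routine, so I would not belabor those computations.
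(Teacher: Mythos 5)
Your proposal is correct and follows essentially the same route as the paper: the $G$-equivariant half-open decomposition of the cone over the simplex into translates of the free monoid on the vertices, identification of the graded pieces of that monoid with $\Sym^\bullet M'_\C$ via Lemma~\ref{l:exterior}, and then Lemma~\ref{l:character} and Lemma~\ref{l:permutation} for the evaluated and multiplicity statements. The only cosmetic difference is that the paper phrases the decomposition as a graded tensor product $\Sym^\bullet M'_\C \otimes V$ rather than as a direct sum over box elements, but these are the same identity in $R(G)[[t]]$.
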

\begin{proof}
Since the vertices $\{ v_0, \ldots, v_d \}$ of $P$ form a basis of $M'_\C$, we may identify  the representation $M'_\C$ with the permutation representation induced by the action of $G$ on 
$\{ v_0, \ldots, v_d \}$. Hence we may identify $\Sym^\bullet M'_\C$ with the graded permutation representation of $G$ on $\{ \sum_{i = 0}^d b_i v_i \mid  b_i \in \Z_{\ge 0} \}$, where the latter set is graded by projection $M' = M \oplus \Z \rightarrow \Z$ onto the second coordinate. Similarly, let 
$V$ denote the graded permutation representation induced by the action of $G$ on $\BOX(P)$.  
If $C$ denotes the cone over $P$, then every lattice point $v$ in $C$ can be uniquely written as a sum 
$v = w + \sum_{i = 0}^d b_i v_i$, for some $w \in \BOX(P)$ and  $b_i \in \Z_{\ge 0}$. It follows that 
the tensor product $\Sym^\bullet M'_\C \otimes V$ is the graded permutation representation induced by the action of $G$ on $C \cap M'$. 
The first statement now follows since Lemma~\ref{l:exterior} implies that 
$\sum_{m \ge 0} \Sym^m M'_\C \, t^m = \frac{1}{(1 - t)\det(I - \rho t)}$. 
The second statement follows immediately by evaluating characters at $g \in G$ using Lemma~\ref{l:character}, and the final statement follows from Lemma~\ref{l:permutation}. 

\end{proof}

The following remark can be useful for producing examples. 

\begin{remark}\label{r:pyramid}
The \define{pyramid} $\Pyr(P)$ of $P$ is the convex hull in $M'_\R$ of $P$ and the origin. One may verify that 
$\Pyr(P)$ is a $(d + 1)$-dimensional, $G$-invariant lattice polytope with $\phi_P[t] = \phi_{\Pyr(P)}[t]$. 
\end{remark} 

\begin{lemma}\label{l:rational}
For any $g$ in $G$, $\phi[t](g)$ is a rational function in $t$ that is regular at $t = 1$. 
\end{lemma}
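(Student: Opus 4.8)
The plan is to evaluate the defining identity for $\phi[t]$ at a fixed $g \in G$ and read off the claim from the Ehrhart theory of the single rational polytope $P_g$. Evaluating characters at $g$ in
\[
\sum_{m \ge 0} \chi_{mP} t^m = \frac{\phi[t]}{(1-t)\det(I - \rho t)}
\]
and using Lemma~\ref{l:character}, which gives $\chi_{mP}(g) = f_{P_g}(m)$, we obtain
\[
\sum_{m \ge 0} f_{P_g}(m) t^m = \frac{\phi[t](g)}{(1-t)\det(I - \rho(g)t)},
\]
so $\phi[t](g) = (1-t)\det(I - \rho(g)t) \sum_{m \ge 0} f_{P_g}(m)t^m$. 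The first task is therefore to show the right-hand side is a rational function of $t$. Since $f_{P_g}(m)$ is a quasi-polynomial in $m$ (Ehrhart's theorem, as recalled in Section~\ref{s:basicEhrhart}, applied via Lemma~\ref{l:vertices}), its generating series $\sum_m f_{P_g}(m)t^m$ is a rational function of $t$ whose denominator is a product of cyclotomic-type factors $(1-t^{e})$; multiplying by the polynomial $(1-t)\det(I-\rho(g)t)$ keeps it rational.

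The substantive point is regularity at $t = 1$. Here I would use the structure of $\det(I - \rho(g)t)$ coming from the proof of Lemma~\ref{l:determinant}: writing $a = \dim M^g = \dim P_g$ (using Lemma~\ref{l:dimension}), the characteristic polynomial of $\rho(g)$ factors as $(1-t)^{a}$ times a polynomial not vanishing at $t=1$, so $\det(I - \rho(g)t)$ contributes a zero of order exactly $a$ at $t=1$, and $(1-t)$ contributes one more, for a total zero of order $a+1$ at $t=1$ in the numerator prefactor. On the other hand, the generating series $\sum_m f_{P_g}(m)t^m$ has a pole at $t=1$ of order exactly $a+1 = \dim P_g + 1$: this is a standard fact for Ehrhart quasi-polynomials of a $(\dim P_g)$-dimensional rational polytope — indeed one can write $\sum_m f_{P_g}(m)t^m = \frac{N(t)}{(1-t^{\delta})^{a+1}}$ for $\delta$ the denominator of $P_g$, with $(1-t)^{a+1}$ the precise order of vanishing picked up at $t=1$ by the degree-$a$ leading term of the quasi-polynomial. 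Multiplying the order-$(a+1)$ pole against the order-$(a+1)$ zero leaves $\phi[t](g)$ finite at $t=1$.

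The main obstacle is pinning down that the pole of $\sum_m f_{P_g}(m)t^m$ at $t = 1$ has order exactly $a+1$, not merely at most $a+1$ — equivalently, that the leading quasi-coefficient of $f_{P_g}$ does not "cancel" at the relevant root of unity. By \eqref{e:dindex} the degree-$a$ coefficient $c_a(m)$ of $f_{P_g}(m)$ equals $\vol(P_g) > 0$ whenever $\ind(P_g) \mid m$ and is $0$ otherwise, so it is a nonzero periodic function; a short computation with the partial-fraction expansion of the generating series of a quasi-polynomial shows that such a top coefficient forces the $(1-t)^{a+1}$ in the denominator to survive, giving the pole order exactly $a+1$. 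Combined with the zero of order exactly $a+1$ from $(1-t)\det(I-\rho(g)t)$, this yields that $\phi[t](g)$ is regular — and in fact nonzero — at $t=1$, completing the proof. I would close by noting this simultaneously reproves that $\phi[t](g)$ is a rational function, since it exhibits it as the product of two explicit rational functions.
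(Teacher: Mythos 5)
Your proposal is correct and follows essentially the same route as the paper: evaluate at $g$, use Lemma~\ref{l:character} to identify $\chi_{mP}(g)$ with $f_{P_g}(m)$, and cancel the pole of order at most $\dim P_g + 1$ of the Ehrhart generating series at $t=1$ against the zero of order $\dim M^g + 1 = \dim P_g + 1$ of $(1-t)\det(I-\rho(g)t)$, via Lemma~\ref{l:dimension}. The only difference is that you additionally pin down that the pole order is exactly $\dim P_g + 1$ (and hence that $\phi[1](g)\neq 0$), which is true but not needed for regularity; the paper gets by with the upper bound on the pole order alone.
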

\begin{proof}
By Lemma~\ref{l:character}, $\sum_{m \ge 0} \chi_{mP}(g) t^m = \sum_{m \ge 0} f_{P_g}(m) t^m$, and the latter generating series is a rational function with a pole of order at most $\dim P_g + 1$ at $t = 1$
\cite[Theorem~4]{BSWMaximal}. Hence Lemma~\ref{l:dimension} implies that
$(1 - t)\det(I - \rho(g) t) \sum_{m \ge 0} \chi_{mP}(g) t^m$ is regular at $t = 1$. 
\end{proof}

It follows from the lemma above that we may consider the rational class function
$\phi[1]$. Recall
that $M^g$ denotes the subspace of $M_\R$ fixed by $g$, and let  $(M^g)^\perp$ denote the orthogonal subspace in $M_\R$. Recall that the \emph{index} $\ind(Q)$ of a rational polytope $Q$ is the smallest positive integer $m$ such that the affine span of $m Q$ contains a lattice point (see Section~\ref{s:basicEhrhart}).

\begin{proposition}\label{p:rational}
With the notation above,
\[
\phi[1](g)
=   \frac{ \dim(P_g)! \vol(P_g) \det(I - \rho(g))_{(M^g)^\perp }}{ \ind(P_g) }.
\]
In particular, $\phi[1]$ takes non-negative values. 
\end{proposition}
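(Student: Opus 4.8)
The plan is to evaluate $\phi[t](g)$ directly from the defining equation by multiplying through and taking the limit $t \to 1$. Since $\phi[t](g) = (1-t)\det(I - \rho(g)t)\sum_{m \ge 0} f_{P_g}(m) t^m$ by Lemma~\ref{l:character}, and this is regular at $t=1$ by Lemma~\ref{l:rational}, I can compute $\phi[1](g)$ as $\lim_{t \to 1}$ of the right-hand side. The key is to factor $\det(I - \rho(g)t)$ according to the eigenvalue structure of $A := \rho(g)$. Writing $a = \dim M^g = \dim P_g$ (Lemma~\ref{l:dimension}), we have $\det(I - At) = (1-t)^a \cdot \det(I - At)_{(M^g)^\perp}$, where the second factor is the characteristic polynomial restricted to the complement of the fixed space and is nonzero at $t=1$ — its value there is $\det(I - \rho(g))_{(M^g)^\perp}$.

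The first main step is then to understand the pole of $\sum_{m \ge 0} f_{P_g}(m) t^m$ at $t = 1$. Since $f_{P_g}(m)$ is an Ehrhart quasi-polynomial of degree $a = \dim P_g$ with period dividing $\ind(P_g)$ (this follows from Lemma~\ref{l:vertices} and the basic facts recalled in Section~\ref{s:basicEhrhart}), its generating series has the form $\frac{N(t)}{(1-t^e)^{a+1}}$ for $e = \ind(P_g)$, or more usefully, multiplying by $(1-t)^{a+1}$ gives a function regular at $t=1$. The leading behavior is governed by the top Ehrhart coefficient $c_a(m)$, which by \eqref{e:dindex} equals $\vol(P_g)$ when $\ind(P_g) \mid m$ and $0$ otherwise. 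So I would argue: $\lim_{t \to 1}(1-t)^{a+1}\sum_{m\ge 0} f_{P_g}(m) t^m$ picks out only the degree-$a$ part of the quasi-polynomial, and this limit equals $\frac{a!\,\vol(P_g)}{\ind(P_g)}$ — the factor $a!$ from $\lim (1-t)^{a+1}\sum m^a t^m = a!$, and the factor $\frac{1}{\ind(P_g)}$ because only the residues $m \equiv 0 \bmod \ind(P_g)$ contribute $\vol(P_g)$ while the others contribute $0$, averaging to $\vol(P_g)/\ind(P_g)$.

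Combining: $\phi[1](g) = \lim_{t\to 1}(1-t)\det(I-At)\sum f_{P_g}(m)t^m = \lim_{t\to 1}\bigl[(1-t)^{a+1}\sum f_{P_g}(m)t^m\bigr]\cdot\det(I-At)_{(M^g)^\perp}\big|_{t=1} = \frac{\dim(P_g)!\,\vol(P_g)}{\ind(P_g)}\det(I-\rho(g))_{(M^g)^\perp}$, which is the claimed formula. For the final assertion that $\phi[1]$ takes non-negative values, I need $\det(I - \rho(g))_{(M^g)^\perp} \ge 0$: the eigenvalues of $A$ on $(M^g)^\perp$ are roots of unity $\ne 1$, occurring in conjugate pairs $\zeta, \bar\zeta$ together with possibly $-1$'s, so the restricted determinant is $\prod(1-\zeta)(1-\bar\zeta) \cdot 2^b = \prod |1-\zeta|^2 \cdot 2^b \ge 0$; and $\vol(P_g), \ind(P_g), \dim(P_g)!$ are all non-negative.

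The main obstacle I anticipate is the careful bookkeeping in the second step — extracting precisely the right constant $\frac{a!\,\vol(P_g)}{\ind(P_g)}$ from the pole of the Ehrhart generating series when the quasi-polynomial has nontrivial period. One must be sure that the lower-order (degree $< a$) and the non-leading periodic pieces of $c_a(m)$ genuinely do not contribute to the top-order limit, which requires knowing that $c_a(m)$ takes the value $\vol(P_g)$ exactly on the residue class $0 \bmod \ind(P_g)$ and $0$ elsewhere (equation~\eqref{e:dindex}), and then computing $\lim_{t\to 1}(1-t)^{a+1}\sum_{k \ge 0} (ek+\cdots)^a\, t^{ek}$ correctly. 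Everything else is routine linear algebra over roots of unity, already essentially carried out in the proof of Lemma~\ref{l:determinant}.
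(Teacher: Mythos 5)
Your proposal is correct and follows essentially the same route as the paper: multiply by $(1-t)\det(I-\rho(g)t)$, factor $\det(I-\rho(g)t)=(1-t)^{\dim P_g}\det(I-\rho(g)t)_{(M^g)^\perp}$, discard the sub-leading Ehrhart terms (whose generating series have poles of order at most $\dim P_g$ at $t=1$), and evaluate the limit of $(1-t)^{\dim P_g+1}\sum_m c_{\dim P_g}(m)m^{\dim P_g}t^m$ via the Eulerian-polynomial identity, which yields exactly your $\dim(P_g)!\,\vol(P_g)/\ind(P_g)$; the non-negativity argument via conjugate pairs of eigenvalues is also the paper's. (One throwaway remark --- that the period of $f_{P_g}$ divides $\ind(P_g)$ --- is not right in general, since the period divides the denominator of $P_g$ rather than its index, but your actual computation only uses equation~\eqref{e:dindex}, so nothing is affected.)
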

\begin{proof}
It follows from Lemma~\ref{l:character} and Section~\ref{s:basicEhrhart}
that 
if $N = \dim P_g$ and $r = \ind(P_g)$, then 
\[
\chi_{mP}(g) = f_{P_g}(m) = c_N(m)m^N + c_{N - 1}(m) m^{N - 1} +  \cdots + c_0(m), 
\]
where $c_i(m)$ is a periodic function in $m$, 
 and
\begin{equation*}
c_N(m) =   \left\{\begin{array}{cl} 
\vol(P_g) & \text{if } r | m 
 \\ 0 & \text{otherwise}. \end{array}\right. 
\end{equation*}
It follows from \cite[Theorem~4]{BSWMaximal} that $\sum_{m \ge 0} c_i(m) m^i t^m$ is a rational function in $t$ with a pole at $t = 1$ of order at most $N$ for $i < N$. Also, 
\[
\sum_{m \ge 0} c_N(m)m^N t^m = \vol(P_g) \sum_{r | m} m^N t^m = r^N \vol(P_g)  \sum_{m \ge 0} m^N (t^r)^m. 
\] 
Here $\sum_{m \ge 0} m^N t^m = \frac{tA(N; t)}{(1 - t)^{N + 1}}$, where $tA(N; t)$ is the 
$N^\mth$ Eulerian polynomial,  and $A(N;1) = N!$ (cf. Section~\ref{s:hypercube}). 
It follows from Lemma~\ref{l:dimension} that $\phi[1](g)$ equals 
\[
[(1 - t)^{N + 1} \det(I - \rho(g)t)_{(M^g)^\perp} \sum_{m \ge 0} \chi_{mP}(g) t^m]|_{t = 1} = 
\]
\[
[(1 - t)^{N + 1} \det(I - \rho(g)t)_{(M^g)^\perp}  r^N \vol(P_g) \frac{t^rA(N; t^r)}{(1 - t^r)^{N + 1}}   ]|_{t = 1},
\]
and the first statement follows. The second statement follows since the complex eigenvalues of $\rho(g)$ come in conjugate pairs $\{ e^{\pm i \alpha} \mid \alpha \in \R \}$, and $(1 - e^{i \alpha} )(1 - e^{-i \alpha} ) =  2 - 2 \cos \alpha \ge 0$.  
\end{proof}

\begin{remark}\label{r:rational}
Observe that if $\phi[t]$ is a polynomial, then $\phi[1]$ is an integral-valued virtual character,  and the right hand side of Proposition~\ref{p:rational} is a (non-negative) integer. We conjecture that this holds in general (Conjecture~\ref{c:integer}). 
\end{remark}

The result below is a consequence of the  equivariant generalization of Ehrhart reciprocity in Theorem~\ref{t:combinatorics}. Recall from Section~\ref{s:basicEhrhart} that  the \emph{codegree} of $P$ is   $l = \min\{ m \in \Z_{> 0} \mid \Int(mP) \cap M \ne \emptyset \}$ and  the \emph{degree} $s = d + 1 - l$ of $P$ is  the degree of  $h^*(t)$. 

\begin{corollary}\label{c:reciprocity}
With the notation above, 
\begin{equation*}
\sum_{m \ge 1} \chi^*_{mP} t^m = \frac{ t^{d + 1}\phi[t^{-1}]}
{(1- t)\det(I - \rho t)}.
\end{equation*}
In particular, if $\phi[t]$ is a polynomial then the degree of $\phi[t]$ is equal to the degree of $P$, and $\phi_s = \chi^*_{lP}$. 
In this case, the multiplicity of the trivial representation (respectively $\det(\rho)$) in $\phi_s$ equals the number of $G$-orbits of 
$\Int(lP) \cap M$ (respectively  the number of $G$-orbits of 
$\Int(lP) \cap M$ whose isotropy subgroup is contained in $\{ g \in G \mid \det(\rho(g)) = 1 \}$).

\end{corollary}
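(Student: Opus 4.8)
The plan is to obtain the reciprocity identity directly from the equivariant Ehrhart reciprocity in Theorem~\ref{t:combinatorics}, by a formal manipulation of generating series, and then to read off the consequences for $\phi[t]$ when it is a polynomial. First I would recall that
\[
\sum_{m \ge 0} \chi_{mP} t^m = \frac{\phi[t]}{(1 - t)\det(I - \rho t)},
\]
so that $\sum_{m \ge 0} L(m) t^m$ and $\sum_{m \ge 1} \chi^*_{mP} t^m$ are both rational functions with denominator $(1 - t)\det(I - \rho t)$ (after evaluating at each $g \in G$, these are honest rational functions by Lemma~\ref{l:rational}, and one checks $\det(I - \rho(g)t) = t^{d}\det(I - \rho(g)^{-1}t^{-1})\det(\rho(g))$ since the eigenvalues of $\rho(g)$ are roots of unity closed under inversion). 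The standard trick is: the quasi-polynomial identity $(-1)^d L(-m) = \chi^*_{mP}\cdot\det(\rho)$ for $m \ge 1$ translates, via the usual correspondence between a quasi-polynomial and its generating function (evaluate at each $g$, where $L(m)(g) = f_{P_g}(m)$ is an honest quasi-polynomial, and apply the scalar version of this correspondence — i.e. substitute $t \mapsto t^{-1}$ in the rational generating function and track signs), into
\[
\sum_{m \ge 1} \chi^*_{mP}\cdot\det(\rho)\, t^m = (-1)^{d+1}\,\bigl[\textstyle\sum_{m \ge 0} L(m)t^m\bigr]\big|_{t \mapsto t^{-1}}.
\]
Multiplying both sides by $\det(\rho)$ (which has order dividing $2$, so $\det(\rho)^2 = 1$) and plugging in the formula for $\sum L(m)t^m$, the denominator $(1-t)\det(I-\rho t)$ under $t \mapsto t^{-1}$ becomes, up to the sign $(-1)^{d+1}$ and a factor $\det(\rho)$, exactly $(1-t)\det(I-\rho t)\cdot t^{-(d+1)}$; the $\det(\rho)$ factors cancel and one is left with $\sum_{m\ge 1}\chi^*_{mP}t^m = t^{d+1}\phi[t^{-1}]/\bigl((1-t)\det(I-\rho t)\bigr)$, as claimed.

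For the "in particular" clause, suppose $\phi[t] = \sum_{i=0}^{s} \phi_i t^i$ is a polynomial with $\phi_s \ne 0$; here $s$ is by definition the degree of $\phi[t]$, and I must show it coincides with the degree of $P$ in the classical sense, i.e. the degree of $h^*_P(t)$. Restricting the defining identity to $G = 1$ recovers $\sum f_P(m)t^m = h^*_P(t)/(1-t)^{d+1}$, and since $\phi[t](1) = h^*_P(t)$ by this restriction, $\deg h^*_P = \deg \phi[t](1) \le \deg \phi[t] = s$. For the reverse I would use that $\phi[1](g)$ is non-negative for all $g$ (Proposition~\ref{p:rational}), so $\phi[t]$ cannot be divisible by $(1-t)$ as a polynomial in $R(G)[t]$ after evaluating — actually the cleaner route is to observe that the reciprocity identity just proved gives $\phi_s = \chi^*_{lP}$ where $l = d+1-s$: comparing the lowest-degree term $t^{?}$ on each side of $\sum_{m\ge 1}\chi^*_{mP}t^m = t^{d+1}\phi[t^{-1}]/\bigl((1-t)\det(I-\rho t)\bigr)$, the right-hand side is $t^{d+1-s}(\phi_s + \phi_{s-1}t + \cdots)/\bigl((1-t)\det(I-\rho t)\bigr) = \phi_s\, t^{d+1-s} + (\text{higher order})$, while the left-hand side has lowest term $\chi^*_{(l')P}t^{l'}$ where $l' = \min\{m \mid \Int(mP)\cap M \ne \emptyset\}$. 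Hence $l' = d+1-s$, which is the defining relation between codegree and degree, so $s$ equals the degree of $P$, $l = l'$ is the codegree, and reading off the coefficient of $t^{l}$ gives $\phi_s = \chi^*_{lP}$.

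Finally, for the last paragraph: $\phi_s = \chi^*_{lP}$ is the permutation character of the $G$-action on $\Int(lP)\cap M$, so Lemma~\ref{l:permutation} applies verbatim. The multiplicity $\langle \chi^*_{lP}, 1\rangle$ equals the number of $G$-orbits of $\Int(lP)\cap M$, and the multiplicity $\langle \chi^*_{lP}, \det(\rho)\rangle$ equals the number of those $G$-orbits whose isotropy subgroup is contained in $\det(\rho)^{-1}(1) = \{g \in G \mid \det(\rho(g)) = 1\}$, since $\det(\rho)$ is a $1$-dimensional representation. The main obstacle I anticipate is the bookkeeping in the generating-function manipulation — keeping the substitution $t \mapsto t^{-1}$, the sign $(-1)^{d+1}$, the factor $\det(\rho)$ arising from $\det(I - \rho t)$ under inversion, and the pole-order considerations (which require Lemma~\ref{l:rational} to guarantee all these series are rational functions regular where needed) all consistent; everything else is a routine application of the cited lemmas.
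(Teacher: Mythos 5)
Your proposal is correct and follows essentially the same route as the paper: both rest on Ehrhart reciprocity combined with the standard $t \mapsto t^{-1}$ reciprocity for rational generating functions, with the identities $\det(I - \rho(g)^{-1}t) = \det(I - \rho(g)t)$ and $\det\rho(g) = (-1)^{d - \dim P_g}$ absorbing the signs, and Lemma~\ref{l:permutation} handling the multiplicity statements. The only cosmetic difference is that you invoke the already-proved equivariant reciprocity of Theorem~\ref{t:combinatorics} rather than re-running the per-element argument, and you spell out the lowest-degree-term comparison that the paper dismisses as ``immediate''; both are fine.
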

\begin{proof}
 By Lemma~\ref{l:character}, $\chi^*_{mP}(g) = f_{P_g}^\circ(m)$ for any positive  integer $m$. 
By Ehrhart Reciprocity (see Section~\ref{s:basicEhrhart}),  $f_{P_g}^\circ(m) =  (-1)^{\dim P_g} f_{P_g}(-m)$ for any positive integer $m$, and, by Proposition~4.2.3 in \cite{StaEnumerative}, 
$\sum_{m \ge 1} f_{P_g}(-m) = \frac{-\phi[t^{-1}](g)}{(1 - t^{-1})\det(I - \rho(g)t^{-1})}$. Hence
\[
\sum_{m \ge 1} f_{P_g}^\circ (m) t^m = \frac{(-1)^{\dim P_g - 1}\phi[t^{-1}](g)}{(1  - t^{-1})\det(I - \rho(g)t^{-1})}
 = \frac{(-1)^{d - \dim P_g}t^{d + 1}\phi[t^{-1}](g)}{\det \rho(g) (1 - t)\det(I - \rho(g)^{-1}t)}. 
\]
Since the eigenvalues of $\rho(g)$ are roots of unity and $\rho(g)$ is integer-valued, $\det(I - t \rho(g)^{-1}) = \det(I - t \,  \bar{\rho(g)}) = \det(I - t \rho(g))$. Moreover, 
by Lemma~\ref{l:determinant}, $\det \rho(g) = (-1)^{d - \dim P_g}$. We conclude that 
\[
\sum_{m \ge 1} f_{P_g}^\circ (m) t^m = \frac{ t^{d + 1}\phi[t^{-1}](g)}
{(1 - t)\det(I - \rho(g)t)}, 
\]
as desired. The second statement is immediate, and the final statement follows from Lemma~\ref{l:permutation}.  
\end{proof}

If $V$ and $W$ are virtual representations of $G$, then we write $V \ge W$ if $V - W$ is an effective representation. 

\begin{corollary}\label{c:basic}
With the notation above, $\phi_0 = 1$ and $\phi_1$ is an effective representation. 
Moreover, if $\phi[t]$ is a polynomial, then $\phi_1 \ge \phi_d \ge 0$. 
\end{corollary}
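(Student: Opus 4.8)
The plan is to read off $\phi_0$ and $\phi_1$ directly from the defining generating-series identity, and then handle the inequality $\phi_1 \ge \phi_d \ge 0$ via the reciprocity statement of Corollary~\ref{c:reciprocity}. First I would expand
\[
\phi[t] = (1 - t)\det(I - \rho t) \sum_{m \ge 0} \chi_{mP} t^m
\]
as a power series in $R(G)[[t]]$. The constant term is $\chi_{0P} = 1$, giving $\phi_0 = 1$ immediately. For the coefficient of $t^1$: the factor $(1-t)\det(I - \rho t) = (1 - t)(1 - M_\C t + \cdots)$ has $t^1$-coefficient $-1 - M_\C$, and $\sum \chi_{mP}t^m$ has $t^0$-coefficient $1$ and $t^1$-coefficient $\chi_P$. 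Hence
\[
\phi_1 = \chi_P - M_\C - 1.
\]
To see this is effective, I would argue geometrically: $\chi_P$ is the permutation representation on $P \cap M'$ (working in $M' = M \oplus \Z$, so the lattice points of $P$ sit at height $1$). Among these lattice points, the $d+1$ affinely independent ones can be taken to be vertices, but more usefully one picks the origin-translated picture: the lattice points of $P$ span, together with the "height 1" relation, a copy of $M_\C \oplus \C$ inside the permutation module $\C[P \cap M']$. Concretely, the linear map $\C[P\cap M'] \to M'_\C = M_\C \oplus \C$ sending a lattice point to itself is $G$-equivariant and surjective (since $P$ is $d$-dimensional, its lattice points affinely span $M_\R$), so $\C[P \cap M']$ contains $M_\C \oplus \C$ as a subrepresentation, whence $\chi_P - M_\C - 1$ is effective. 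This is the one step that needs a little care: I must exhibit the splitting $G$-equivariantly, which follows since every complex representation of a finite group is semisimple, so the surjection $\C[P\cap M'] \twoheadrightarrow M'_\C$ splits.

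For the second assertion, assume $\phi[t]$ is a polynomial, so by Corollary~\ref{c:reciprocity} it has degree $s = \deg h^*$ and $\phi_s = \chi^*_{lP}$ where $l$ is the codegree. The classical input I would mirror is the inequality $0 \le h^*_d \le h^*_1$ from \eqref{e:easyinequality}; here I want its equivariant upgrade $\phi_1 \ge \phi_d \ge 0$. I would split into cases. If $s < d$, then $\phi_d = 0$ and we are reduced to $\phi_1 \ge 0$, already shown. If $s = d$, then $l = 1$, so $\phi_d = \phi_s = \chi^*_{lP} = \chi^*_P$ is the honest permutation representation on $\Int(P)\cap M'$, hence effective, giving $\phi_d \ge 0$. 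For $\phi_1 - \phi_d \ge 0$ in this case: $\phi_1 - \phi_d = \chi_P - M_\C - 1 - \chi^*_P$. Now $\chi_P - \chi^*_P$ is the permutation representation on the \emph{boundary} lattice points $\partial P \cap M'$, and the point is that the boundary of a $d$-polytope contains $d+1$ affinely independent lattice points spanning $M_\R$ (e.g. one may choose vertices, and a $G$-equivariant argument as above shows $\C[\partial P \cap M']$ contains $M_\C \oplus \C$). So $\chi_P - \chi^*_P - M_\C - 1$ is effective, which is exactly $\phi_1 - \phi_d \ge 0$.

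The main obstacle I anticipate is the equivariant spanning argument for the boundary: I need that the lattice points on $\partial P$ affinely span $M_\R$ in a way compatible with the $G$-action, i.e. that the $G$-submodule of $\C[\partial P \cap M']$ generated by these points surjects onto $M'_\C = M_\C \oplus \C$. This is clear set-theoretically (the boundary of a full-dimensional polytope affinely spans the ambient space, since a polytope is the convex hull of its vertices and those lie on the boundary), and the $G$-equivariance is automatic because the surjection $\C[\partial P \cap M'] \to M'_\C$ is itself a map of $G$-modules and every such surjection of finite-group representations splits. So in fact there is no real obstacle once one is careful to phrase everything in $M'$ rather than $M$; the only subtlety is keeping track of the extra trivial summand $\C$ coming from the "height" coordinate, which accounts for the $-1$ in $\phi_1 = \chi_P - M_\C - 1$. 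I would also double-check the degenerate small cases ($d = 0$, $d = 1$) separately, where $\phi[t]$ is forced to be a constant or linear and the inequalities are trivial.
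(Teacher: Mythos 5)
Your proposal is correct and follows essentially the same route as the paper: both compute $\phi_1 = \chi_P - M'_\C$ directly from the generating series, both obtain effectiveness by exhibiting $M'_\C$ as a quotient (hence, by semisimplicity, a summand) of the permutation module on a $G$-stable set of lattice points of $P$ disjoint from the interior ones, and both reduce the inequality to $\phi_d = \chi^*_P$ via Corollary~\ref{c:reciprocity}. The only cosmetic differences are that the paper uses the vertex set where you use all boundary lattice points, and that it treats the cases $s<d$ and $s=d$ uniformly by observing $\chi^*_P=0$ when the codegree exceeds $1$.
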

\begin{proof}
Since $\chi_{mP}$ is the trivial representation when $m = 0$, it follows from the definitions that $\phi_0 = 1$ and $\phi_1 = \chi_P - M'_\C$.   
 Let $W$ denote the permutation representation of $G$ acting on the vertices $\{ v_i \mid i \in I  \}$ of 
 $P$, 
and let $W' \subseteq W$ be the $G$-submodule consisting of all relations satisfied by the vectors $\{ v_i \mid i \in I  \}$ in $M'_\R$. Since $P$ is $d$-dimensional, the vertices of $P$ span $M'_\R$ as a vector space, and we have an isomorphism of $G$-representations $W/W' \cong M'_\C$. Since 
$W \oplus \chi^*_P$ is a subrepresentation of $\chi_P$ by definition, we conclude that 
$\chi_P - M'_\C - \chi^*_P$ is an effective representation. Finally, Corollary~\ref{c:reciprocity} implies that if $\phi[t]$ is a polynomial, then $\phi_d = \chi^*_P$, and the result follows. 
\end{proof}

The example below demonstrates that we cannot hope that $\phi_i$ is effective for $i \ge 2$, 
even if $h_i^* > 0$. 

\begin{example}
Let $G = \Z/2\Z$ with generator $\tau$, and let $\chi: G \rightarrow \C$ be the linear character sending $\tau$ to $-1$, so that the irreducible representations of $G$ are $\{ 1, \chi \}$. Let $P$ be the convex hull of $(0,0)$, $(3,0)$, $(0,3)$ and $(3,3)$, and consider the action of $G$ on $M = \Z^2$ given by
\begin{displaymath}
\tau \mapsto \left(\begin{array}{c c}
-1 & 0 \\ 
0 & 1 
\end{array}\right). 
\end{displaymath}
Then, with the notation of Section~\ref{s2}, $P$ is $G$-invariant `up to translation', and one computes that $\phi_2 = 5 - \chi$. 
Observe that in this example $\phi[t]$ is not a polynomial by Lemma~\ref{l:bad}. 
\end{example}

\excise{
We record the following basic corollary as it may be useful in computations. 

\begin{corollary}
With the notation above, if $\phi[t]$ is a polynomial, then the trivial representation  occurs with non-zero multiplicity in $\phi_s$. If we further assume that $P$ contains an interior lattice point, then  the trivial representation  occurs with non-zero multiplicity in $\phi_1$. 
\end{corollary}
\begin{proof}
This follows immediately from Corollary~\ref{c:reciprocity} and Corollary~\ref{c:basic}, using the observation that 
the trivial representation  occurs with non-zero multiplicity in a permuation representation. 
\end{proof}
}

Recall that a $d$-dimensional lattice polytope $P$ in $M$ is \define{reflexive} if the origin is the unique interior lattice point of $P$ and every non-zero lattice point in $M$ lies in the boundary of $mP$ for some positive integer $m$. We have the following equivariant version of Theorem~\ref{t:basicreflexive}. 
Recall that the degree $s$ of $P$ is the degree of $h^*(t)$ and the codegree of $P$ is $l = d + 1 - s$.


\begin{corollary}\label{c:reflexive}
With the notation above, if $P$ is a $G$-invariant lattice polytope of degree $s$ and codegree $l$, then the following are equivalent
\begin{itemize}
\item $\phi[t] = t^s \phi[t^{-1}]$,
\item  $\chi^*_{mP} = \chi_{(m - l)P}$ for $m \ge l$,
\item  $f^\circ_P(m) = f_P(m - l)$ for $m \ge l$,
\item  $h^*_{P}(t) = t^{s} h^*_{P}(t^{-1})$,
\item  $lP$ is a translate of a reflexive polytope. 
\end{itemize}
\end{corollary}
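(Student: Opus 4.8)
The plan is to establish the equivalences by combining the classical Theorem~\ref{t:basicreflexive} with the equivariant reciprocity result of Corollary~\ref{c:reciprocity}, using the fact that a virtual character is determined by its values at all $g \in G$, together with Lemma~\ref{l:character}. The last three conditions --- $f^\circ_P(m) = f_P(m-l)$ for $m \ge l$, the palindromicity $h^*_P(t) = t^s h^*_P(t^{-1})$, and ``$lP$ is a translate of a reflexive polytope'' --- are already known to be equivalent by Theorem~\ref{t:basicreflexive}, so the work is to tie the first two (genuinely equivariant) conditions into this chain. The natural route is: first show (first) $\iff$ (second), then show (second) $\Rightarrow$ (third) and (fifth) $\Rightarrow$ (first), closing the loop.

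First I would prove (first) $\iff$ (second). By definition, $\sum_{m \ge 0} \chi_{mP} t^m = \phi[t] / \big((1-t)\det(I - \rho t)\big)$, so $\sum_{m \ge 1} \chi_{(m-l)P} t^m = t^l \sum_{m \ge 0} \chi_{mP} t^m = t^l \phi[t] / \big((1-t)\det(I-\rho t)\big)$ (the term $m = 0$ on the right contributes nothing in the range $m \ge 1$ once $l \ge 1$, and for $l = 1$ one checks the constant term $\chi_{0\cdot P} = 1$ matches). On the other hand, Corollary~\ref{c:reciprocity} gives $\sum_{m \ge 1} \chi^*_{mP} t^m = t^{d+1} \phi[t^{-1}] / \big((1-t)\det(I - \rho t)\big)$. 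Since $d + 1 = s + l$, the two generating series agree as elements of $R(G)[[t]]$ if and only if $t^l \phi[t] = t^{s+l}\phi[t^{-1}]$, i.e. $\phi[t] = t^s \phi[t^{-1}]$; and by uniqueness of power series coefficients, the series agree iff $\chi^*_{mP} = \chi_{(m-l)P}$ for all $m \ge l$. This gives (first) $\iff$ (second) cleanly, with the only delicate point being the bookkeeping of low-order terms, which I would handle by noting $\chi^*_{mP} = 0$ for $1 \le m < l$ (no interior lattice points below the codegree) and $\chi_{(m-l)P} = 0$ there as well.

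Next, (second) $\Rightarrow$ (third) is immediate by evaluating characters at $g = 1$: $\chi^*_{mP}(1) = f^\circ_P(m)$ and $\chi_{(m-l)P}(1) = f_{P}(m-l)$ by Lemma~\ref{l:character} (with $P_1 = P$), so (second) specializes to (third). Then (third) $\iff$ (fifth) is exactly Theorem~\ref{t:basicreflexive} (and also $\iff$ (fourth)). To close the cycle I would prove (fifth) $\Rightarrow$ (first): if $lP$ is a translate of a reflexive polytope then, since $h^*$-palindromicity is the statement (fourth) which is equivalent to (fifth), and since the reflexivity of (a translate of) $lP$ is a $G$-invariant geometric property, I can invoke Remark~\ref{r:smooth}-style reasoning --- actually more simply, the cleanest path is to show (fifth) $\Rightarrow$ (second) directly: reflexivity of $lP$ (up to translation) means the polar duality sets up, for each $g$, a bijection between $\Int(mP) \cap M$ and $(m-l)P \cap M$ that is $G$-equivariant (the translation vector making $lP$ reflexive can be taken $G$-fixed, since the barycenter of $P$ is $G$-invariant), hence an isomorphism of permutation representations $\chi^*_{mP} \cong \chi_{(m-l)P}$, which is (second).

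I expect the main obstacle to be the equivariance in the step (fifth) $\Rightarrow$ (second): one must check that the translation realizing $lP$ as a genuine reflexive polytope can be chosen $G$-equivariantly and that the resulting lattice-point bijection respects the $G$-action, rather than merely counting. Here the key leverage is that $P$ is $G$-invariant only ``up to translation'' in $M$ but genuinely $G$-invariant in $M' = M \oplus \Z$, so it is cleanest to do the argument upstairs in $M'$, where the cone over $P$ is honestly $G$-stable and the reflexive structure of $lP$ translates into a $G$-equivariant symmetry of the semigroup $C \cap M'$ in each graded piece; equivariance of the bijection is then automatic. Once this is in place, the remaining implications are formal, and the corollary follows.
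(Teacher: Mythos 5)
Your proposal is correct and follows essentially the same route as the paper: (3)--(5) are handled by Theorem~\ref{t:basicreflexive}, (1) $\Leftrightarrow$ (2) is a formal consequence of Corollary~\ref{c:reciprocity}, (2) $\Rightarrow$ (3) is evaluation at $g=1$, and (5) $\Rightarrow$ (2) comes from a $G$-equivariant translation bijection. The only slip is in your justification of why the translation vector is $G$-fixed: the barycenter argument is not the right one (the barycenter need not be a lattice point); rather, the unique interior lattice point $v$ of $lP$ is automatically $G$-invariant by uniqueness, after which $\Int(mP)\cap M' = (m-l)P\cap M' + v$ follows from the obvious inclusion together with the counting identity $f^\circ_P(m) = f_P(m-l)$, with no appeal to polar duality needed.
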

\begin{proof}
The third, fourth and fifth conditions are equivalent by Theorem~\ref{t:basicreflexive}, and the second condition clearly implies the third. The fact that the first two conditions are equivalent is a formal consequence of Corollary~\ref{c:reciprocity}. 
If $lP$ is a translate of a reflexive polytope, then let $v$ denote the unique (and hence $G$-invariant) interior lattice point in $lP$. For $m \ge l$,  
the equality $f_{P}^\circ(m) = f_{P}(m - l)$ implies that  $\Int(mP) \cap M' =  (m - l)P \cap M' + v$, and hence $\chi^*_{mP} = \chi_{(m - l)P}$. 

\excise{
If $\chi^*_{mP} = \chi_{(m - l)P}$ for $m \ge l$, then $f_{P}^\circ(m) = f_{P}(m - l)$ for $m \ge l$, and 
 $lP$ is a translate of a reflexive polytope by Theorem~\ref{t:basicreflexive}. 

If $H^*(t) = t^s H^*(t^{-1})$, then $h^*(t) = t^s h^*(t^{-1})$, and, by Stanley's result above, $lP$ is a translate of a reflexive polytope. Conversely, suppose that $lP$ is a translate of a reflexive polytope. 
We have seen above that $f_{P}^\circ(m) = f_{P}(m - l)$ 
for any $m \ge l$. More specifically, if 
$v$ denotes the unique interior lattice point in $lP$, then 
$\Int(mP) \cap M' =  (m - l)P \cap M' + v$.
Since $v$ is $G$-invariant, we conclude that 
$L^*(mP) = L((m - l)P)$ for $m \ge l$. 
Hence
$\sum_{m \ge 1} L^*(mP)t^m = t^l \sum_{m \ge 0} L(mP)t^m$, and 
Corollary~\ref{c:reciprocity} implies that $t^{d + 1} H^*(t^{-1}) = t^{l} H^*(t)$ as desired.
}
\end{proof}

\begin{remark}
Example~\ref{e:badexample2} demonstrates that $\phi[t]$ is not necessarily a polynomial when $P$ is reflexive. 
\end{remark}

It is clear from the definitions that $\phi_i(g) \in \Z$ for all $g \in G$. Moreover, if $\phi[t]$ is a polynomial, then Corollary~\ref{c:reciprocity} implies that the leading term $\phi_s$ is a permutation representation, and hence $\phi_s(g)$ is a non-negative integer for all $g \in G$. The example below demonstrates that one can not expect that $\phi_i(g) \in \Z_{\ge 0}$ in general.

\begin{example}\label{e:counter}
Let $G = \Z/6\Z$ with generator $\sigma$, and let $\chi: G \rightarrow \C$ be the linear character sending $\sigma$ to $\zeta_6 = e^{\frac{2 \pi i}{6}}$. Then the irreducible characters of $G$ are $\{ 1, \chi, 
 \ldots, \chi^5 \}$. 
Consider the representation of $G$ on $M = \Z^2$ via
\begin{displaymath}
\sigma \mapsto A = \left(\begin{array}{c c}
0 & 1 \\ 
-1 & 1 
\end{array}\right),
\end{displaymath}
and let $P$ be the (reflexive) lattice polytope with vertices $\{ \pm (1, 0), \pm (0, 1), \pm (1,1) \}$. 
One computes that $\phi[t] = 1 + (1 + \chi^2 + \chi^3 + \chi^4)t + t^2$, and 
$\phi[t](\sigma) = 1 - t + t^2$.
In fact, $P$ is a \emph{non-singular} reflexive polytope in the sense that the vertices of each facet of $P$ form a basis of $M$, and the fact that $\phi[t]$ is a polynomial is guaranteed by  Corollary~\ref{c:dim2} (and Proposition~\ref{p:cohomology}). 

Observe that the character $\phi[1] = 3  + \chi^2 + \chi^3 + \chi^4$  has non-negative values. In fact, 
$\phi[1]$ is isomorphic to a permutation representation with isotropy subgroups $\{ 1, \Z/2\Z, \Z/3\Z \}$. 
\end{example}

If $H$ is a finite group acting on a lattice $N$ via $\rho': H \rightarrow GL(N)$, and $Q$ is an $H$-invariant lattice polytope, then 
the direct product $P \times Q = \{ (p,	q) \mid m \in P, n \in Q \}$ and the direct sum $P \oplus Q = \conv \{ P \times 0, 0 \times Q \}$ are $(G \times H)$-invariant lattice polytopes. We may and will regard a $G$-representation (respectively an $H$-representation) as a $(G \times H)$-representation via the projection of $G \times H$ onto $G$ (respectively $H$). 

\begin{proposition}\label{p:Braun}
With the notation above,  $\chi_{m(P \times Q)} = \chi_{mP} \cdot \chi_{mQ}$, and if $P$ is a reflexive polytope and $Q$ contains the origin in its interior, then $\phi_{P \oplus Q}[t] = \phi_P[t] \cdot \phi_Q[t]$. 
\end{proposition}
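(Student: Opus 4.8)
The first claim is essentially immediate: lattice points of $m(P\times Q)$ in $N'\oplus M'$ are pairs $(u,v)$ with $u\in mP\cap M'$ and $v\in mQ\cap N'$, and the $G\times H$-action respects this product decomposition, so the permutation character factors as $\chi_{mP}\cdot\chi_{mQ}$. I would spell this out in one or two lines, using Lemma~\ref{l:character} to evaluate both sides at $(g,h)$ if a character-level argument is cleaner.

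For the second claim, the plan is to mimic Braun's proof (the non-equivariant case, Theorem~1 of \cite{BraEhrhart}), lifting every counting statement to a statement about permutation characters. First I would fix $g\in G$ and $h\in H$ and work with the rational polytopes $(P\oplus Q)_{(g,h)}$, reducing — via Lemma~\ref{l:character} and the fact that a rational function identity in $R(G\times H)[[t]]$ holds iff it holds after evaluation at every $(g,h)$ — to proving the scalar identity
\[
\phi_{(P\oplus Q)_{(g,h)}}[t] \;=\; \phi_{P_g}[t]\cdot \phi_{Q_h}[t].
\]
The key geometric input is that $P$ reflexive forces $P_g$ to again be "reflexive up to the relevant normalization" on the fixed subspace: since $P$ is reflexive, $0$ is its unique interior lattice point and is $G$-fixed, so $0\in P_g$, and one needs that the cone over $P_g$ behaves like the cone over a reflexive polytope — concretely, that $\Int(mP_g)\cap M = (m-1)P_g\cap M + 0$, i.e. $f^\circ_{P_g}(m)=f_{P_g}(m-1)$. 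This is exactly the content of Corollary~\ref{c:reflexive} (equivalently Theorem~\ref{t:basicreflexive}) applied with $G$ replaced by the cyclic group $\langle g\rangle$: $P$ reflexive has codegree $1$, hence so does its restriction, giving the symmetry $\phi_{P_g}[t]=t^{s_g}\phi_{P_g}[t^{-1}]$ needed to run Braun's argument.

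The heart of Braun's argument is then a bijective/generating-function decomposition of lattice points in dilates of the cone over $P\oplus Q$: every lattice point of $C(m(P\oplus Q))$ splits according to which "reflexive shell" of $P$ it sits in, and summing a geometric-type series over these shells — using the reciprocity $f^\circ_{P_g}(k)=f_{P_g}(k-1)$ to telescope — produces the product $\phi_{P_g}[t]\phi_{Q_h}[t]$. I would carry this out at the level of the $g,h$-fixed subpolytopes, being careful that the decomposition is $\langle g\rangle\times\langle h\rangle$-equivariant so that it lifts to an identity of permutation characters rather than mere counts; concretely, I expect to write $C(P\oplus Q)\cap M'$ as a disjoint union indexed by boundary-shells of $P$, each shell contributing an induced representation, and then reassemble using Lemma~\ref{l:exterior} and Corollary~\ref{c:reciprocity}.

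The main obstacle will be the bookkeeping in this last step: making Braun's lattice-point decomposition genuinely equivariant and tracking the grading so that the telescoping of $\sum_k f^\circ_{P_g}(k)t^k = t\sum_k f_{P_g}(k)t^k$ upgrades to an identity of class functions uniformly in $(g,h)$. The reflexivity hypothesis on $P$ (and the interior-point hypothesis on $Q$, which guarantees $\dim(P\oplus Q)=\dim P+\dim Q$ and that $Q_h$ genuinely contributes its full $\phi_{Q_h}[t]$) is what makes the telescoping clean; without it the shells do not nest correctly. Everything else — the product formula for $\chi_{m(P\times Q)}$, passing between $R(G\times H)[[t]]$ identities and pointwise identities, and invoking Corollary~\ref{c:reflexive} — is routine given the results already established.
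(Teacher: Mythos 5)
Your plan is correct and is essentially the proof in the paper: the key point in both is Braun's decomposition of the lattice points of $m(P\oplus Q)$ into pairs $(p,q)$ with $p\in\partial(kP)\cap M$ and $q\in(m-k)Q\cap N$ for $0\le k\le m$, which is manifestly $G\times H$-equivariant and hence yields the character identity $\chi_{m(P\oplus Q)}=\chi_{mQ}+\sum_{k=1}^m(\chi_{kP}-\chi_{(k-1)P})\cdot\chi_{(m-k)Q}$, after which the generating series telescopes to $(1-t)\bigl(\sum_m\chi_{mP}t^m\bigr)\bigl(\sum_m\chi_{mQ}t^m\bigr)$ and the product formula follows from $\det(I-(\rho+\rho')t)=\det(I-\rho t)\det(I-\rho' t)$. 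The detour you anticipate through pointwise evaluation at $(g,h)$, the reflexivity of the fixed subpolytopes $P_g$, and Corollary~\ref{c:reflexive} is harmless but unnecessary: the shell decomposition already holds at the level of $G\times H$-sets, so no reciprocity enters and the ``bookkeeping'' you flag as the main obstacle is a two-line convolution.
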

\begin{proof}
The first statement is clear since the lattice points of $m(P \times Q)$ are $\{ (p,q) \mid p \in mP \cap M, q \in mQ \cap N\}$. For the second statement, it follows from Braun's proof of \cite[Theorem~1]{BraEhrhart}
that the lattice points of $m(P \oplus Q)$ are $\{ (p,q) \mid p \in \partial(kP) \cap M, q \in (m - k)Q \cap N, 
0 \le k \le m \}$, and hence $\chi_{m(P \oplus Q)} = \chi_{mQ} + \sum_{k = 1}^m (\chi_{kP} - \chi_{(k - 1)P})
\cdot \chi_{(m - k)Q}$. We compute
\[
 \frac{\phi_{P \oplus Q}[t] }{(1 - t)\det(I - (\rho + \rho')t)} = 
\sum_{m \ge 0} \chi_{m(P \times Q)}t^m = 
\]
\[
\sum_{m \ge 0} [\chi_{mQ} + \sum_{k = 1}^m (\chi_{kP} - \chi_{(k - 1)P})
\cdot \chi_{(m - k)Q}] t^m = 
(1 - t) \sum_{m \ge 0} \chi_{mP}t^m \sum_{m \ge 0} \chi_{mQ}t^m = 
\]
\[
(1 - t) \frac{ \phi_P[t]}{(1 - t)\det(I - \rho t)}   \frac{\phi_Q[t] }{(1 - t)\det(I - \rho' t)}
= \frac{\phi_P[t] \cdot \phi_Q[t]}{(1 - t)\det(I - (\rho + \rho')t)}. 
\]

\end{proof}

\section{Effectiveness of representations}\label{s:effectiveness}

The goal of this section is to provide criterion to determine whether the power series $\phi[t]$ 
is effective and whether it is a polynomial. We continue with the notation of Section~\ref{s2}, Section~\ref{s:Ehrhart} and Section~\ref{s:hstar}. 

Recall that the generating series of $\{ \chi_{mP} \}_{m \ge 0}$ can be written in the form 
\[
\sum_{m \ge 0} \chi_{mP} t^m = \frac{ \phi[t]}
{(1- t)\det(I - \rho t)},
\]
for some power series $\phi[t] \in R(G)[[t]]$.
We begin with a criterion that guarantees that $\phi[t]$ is a polynomial.

\begin{lemma}\label{l:polynomial}
If $P_g$ is a lattice polytope for all $g$ in $G$, then $\phi[t]$ is a polynomial. 
\end{lemma}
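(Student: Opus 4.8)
The plan is to reduce the statement about $\phi[t]$ to a statement about the Laurent expansion of $\phi[t](g)$ at $t=1$, evaluated character by character, and then to invoke the hypothesis that each $P_g$ is a genuine lattice polytope so that the associated Ehrhart generating series has a single pole (at $t=1$) of the expected order. Concretely: since $R(G)\otimes_\Z\C\cong\C_{\class}(G)$ and a power series in $R(G)[[t]]$ is a polynomial if and only if each of its evaluations $\phi[t](g)$ is a polynomial in $t$, it suffices to fix $g\in G$ and show $\phi[t](g)\in\C[t]$. By Lemma~\ref{l:character} we have $\sum_{m\ge0}\chi_{mP}(g)t^m=\sum_{m\ge0}f_{P_g}(m)t^m$, so by definition of $\phi[t]$,
\[
\phi[t](g)=(1-t)\det(I-\rho(g)t)\sum_{m\ge0}f_{P_g}(m)t^m.
\]

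First I would analyze the right-hand side. Because $P_g$ is assumed to be a lattice polytope of dimension $N:=\dim P_g=\dim M^g$ (Lemma~\ref{l:dimension}), its Ehrhart function $f_{P_g}(m)$ is an honest polynomial of degree $N$, and so $\sum_{m\ge0}f_{P_g}(m)t^m=h^*_{P_g}(t)/(1-t)^{N+1}$ for a polynomial $h^*_{P_g}(t)$ of degree at most $N$ (see Section~\ref{s:basicEhrhart}). Next I would factor $\det(I-\rho(g)t)$: since $g$ acts on $M$ by an integer matrix whose eigenvalues are roots of unity, $\det(I-\rho(g)t)=(1-t)^{N}\,q_g(t)$ where $q_g(t)$ is the product of the factors $(1-\zeta t)$ over the eigenvalues $\zeta\ne1$ of $\rho(g)$ (there are exactly $N$ eigenvalues equal to $1$, namely $\dim M^g=N$, by the remark before Lemma~\ref{l:dimension}); in particular $q_g(1)\ne0$, so $q_g(t)$ is a polynomial with no zero at $t=1$. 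Substituting,
\[
\phi[t](g)=(1-t)\cdot(1-t)^{N}q_g(t)\cdot\frac{h^*_{P_g}(t)}{(1-t)^{N+1}}=q_g(t)\,h^*_{P_g}(t),
\]
which is manifestly a polynomial in $t$. Since this holds for every $g\in G$, and a virtual-character-valued power series all of whose evaluations are polynomials is itself a polynomial in $R(G)[[t]]$ (the coefficient of $t^i$ is a class function that vanishes for $i$ large, hence an element of $R(G)$ that is zero), we conclude $\phi[t]\in R(G)[t]$.

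The only real subtlety — and the step I would be most careful about — is the bookkeeping of pole orders: one must match the order of vanishing of $(1-t)\det(I-\rho(g)t)$ at $t=1$, which is $1+N$, against the order of the pole of $\sum_m f_{P_g}(m)t^m$ at $t=1$, which is exactly $N+1$ because $f_{P_g}$ is a polynomial of degree exactly $N=\dim M^g=\dim P_g$; this is where the hypothesis "$P_g$ is a lattice polytope" is used in full force (for a merely rational $P_g$ the generating series is a quotient with denominator involving $(1-t^{d_g})$ for the denominator $d_g$ of $P_g$, introducing poles at other roots of unity and spoiling polynomiality — indeed this is exactly the content of Lemma~\ref{l:bad}). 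One should also note in passing that no cancellation is needed from the direction $t=\infty$: the degree count $\deg q_g+\deg h^*_{P_g}\le N+(N)=2N$ is automatically finite, so there is nothing further to check. I would present the argument essentially in the displayed form above, perhaps remarking that it simultaneously shows $\phi[t]$ has degree at most $2\dim P$ (though the sharper bound $\deg\phi[t]=s=\deg P$ already follows from Corollary~\ref{c:reciprocity} once polynomiality is known).
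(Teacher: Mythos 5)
Your proof is correct and follows essentially the same route as the paper: evaluate at each $g$, use Lemma~\ref{l:character} to write the generating series as $h^*_{P_g}(t)/(1-t)^{\dim P_g+1}$, and factor $\det(I-\rho(g)t)=(1-t)^{\dim P_g}\det(I-\rho(g)t)_{(M^g)^\perp}$ (your $q_g(t)$), giving $\phi[t](g)=h^*_{P_g}(t)\det(I-\rho(g)t)_{(M^g)^\perp}$. The extra care you take with pole orders and with passing from polynomiality of each evaluation back to $R(G)[t]$ is sound but implicit in the paper's version.
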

\begin{proof}
By Lemma~\ref{l:character} and Lemma~\ref{l:dimension}, 
\[
\sum_{m \ge 0} \chi_{mP}(g) t^m = \sum_{m \ge 0} f_{P_g}(m) t^m = \frac{h^*_{P_g}(t)}{(1 - t)^{\dim P_g + 1}}
= \frac{h^*_{P_g}(t)\det(I - \rho(g)t)_{(M^g)^\perp}}{(1 - t)\det(I - \rho(g)t)}, 
\]
where $\det(I - \rho(g)t) = (1 - t)^{\dim P_g}\det(I - \rho(g)t)_{(M^g)^\perp}$ (cf. Proposition~\ref{p:rational}), and hence $\phi[t](g) = h^*_{P_g}(t)\det(I - \rho(g)t)_{(M^g)^\perp}$. 
\end{proof}

\begin{remark}\label{r:multiple}
It follows from Lemma~\ref{l:vertices} and the lemma above that $\phi_{mP}[t]$ is a polynomial if the exponent of $G$ divides $m$. 
\end{remark}

We next provide a negative result. 
Recall that the \emph{index} of a rational polytope $Q$ is the smallest positive integer $m$ such that the affine span of $m Q$ contains a lattice point (see Section~\ref{s:basicEhrhart}).

\begin{lemma}\label{l:bad}
With the notation above, fix an element $g$ in $G$, and let $r$ denote the index of $P_g$. 
If $rP_g$ is a lattice polytope, and  $\dim P_g > \frac{d - r + 1}{r}$, then $\phi[t]$ is not a polynomial. 
In particular, if there exists a reflection that doesn't fix a point in $M \times 1$, then $\phi[t]$ is not a polynomial.  
\end{lemma}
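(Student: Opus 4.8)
The plan is to analyze the order of the pole at $t=1$ of the generating series $\sum_{m\ge 0}\chi_{mP}(g)\,t^m = \sum_{m\ge 0} f_{P_g}(m)\,t^m$ and compare it to the order of the pole forced by the denominator $(1-t)\det(I-\rho(g)t)$. By Lemma~\ref{l:character} and Lemma~\ref{l:dimension}, this generating series equals $\phi[t](g)/\bigl((1-t)\det(I-\rho(g)t)\bigr)$, and $\det(I-\rho(g)t) = (1-t)^{\dim P_g}\det(I-\rho(g)t)_{(M^g)^\perp}$ where the second factor is nonzero at $t=1$ (its value is $\det(I-\rho(g))_{(M^g)^\perp}$, a product of factors $2-2\cos\alpha>0$ over the non-trivial eigenvalues). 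So the denominator contributes a pole of order exactly $\dim P_g + 1$ at $t=1$. If $\phi[t]$ were a polynomial, then $\phi[t](g)$ would be a polynomial, so the left side would have a pole of order \emph{at most} $\dim P_g + 1$ at $t=1$. The strategy is to show that under the stated hypotheses the pole order is strictly larger than $\dim P_g + 1$, a contradiction.

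First I would compute the pole order of $\sum_{m\ge 0} f_{P_g}(m)\,t^m$ directly. Write $N = \dim P_g$ and $r = \ind(P_g)$. Since $rP_g$ is assumed to be a lattice polytope, the Ehrhart quasi-polynomial $f_{P_g}(m)$ has leading term $c_N(m)m^N$ with $c_N(m) = \vol(P_g)$ when $r\mid m$ and $0$ otherwise (equation~\eqref{e:dindex}). Following the computation in the proof of Proposition~\ref{p:rational}, the contribution of this leading term to the generating series is
\[
\vol(P_g)\sum_{r\mid m} m^N t^m = r^N\vol(P_g)\sum_{k\ge 0}k^N(t^r)^k = r^N\vol(P_g)\,\frac{t^r A(N;t^r)}{(1-t^r)^{N+1}},
\]
and since $1-t^r = (1-t)(1+t+\cdots+t^{r-1})$ with the second factor equal to $r\neq 0$ at $t=1$, this has a pole of order exactly $N+1$ at $t=1$. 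The lower-order terms $c_i(m)m^i$ for $i<N$ contribute poles of order at most... here I need a uniform bound; by \cite[Theorem~4]{BSWMaximal} each $\sum_m c_i(m)m^i t^m$ has a pole of order at most $i+1 \le N$ at $t=1$, so in fact the pole order of $\sum_m f_{P_g}(m)t^m$ is exactly $N+1$ — which does \emph{not} immediately give a contradiction.

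So the real argument must be subtler: the point is that $\phi[t](g)$ is $h^*_{P_g}$-like only when $P_g$ is a genuine lattice polytope; when $rP_g$ is a lattice polytope with $r>1$, the denominator $(1-t)^{N+1}$ governing $f_{P_g}(m)$ is not the "correct" denominator, and I should instead compare $\phi[t](g) = (1-t)\det(I-\rho(g)t)\sum_m f_{P_g}(m)t^m$ against being a polynomial by counting \emph{degrees} rather than pole orders. Concretely: $\sum_m f_{P_g}(m)t^m = N_g(t)/(1-t^r)^{N+1}$ for some polynomial $N_g(t)$ (the generating function has denominator $(1-t^r)^{N+1}$ since $rP_g$ is a lattice polytope, essentially the Ehrhart series of $P_g$ with respect to the $r$-dilated lattice), and the numerator $N_g(t)$ has degree at most $r(N+1)-1$ with nonzero leading-ish structure. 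Then $\phi[t](g) = (1-t)\det(I-\rho(g)t)\,N_g(t)/(1-t^r)^{N+1}$; for $\phi[t](g)$ to be a polynomial, $(1-t^r)^{N+1}$ must divide $(1-t)\det(I-\rho(g)t)\,N_g(t)$ in $\C[t]$. The factor $(1-t^r)^{N+1}$ has a primitive $r$-th root of unity $\zeta\neq 1$ as a root of multiplicity $N+1$, and one shows $(1-t)\det(I-\rho(g)t)$ has $\zeta$ as a root of multiplicity at most $\deg_{t=\zeta}$ something bounded by the eigenvalue count of $\rho(g)$; the total then cannot reach $N+1$ unless a degree/dimension inequality holds, and the hypothesis $\dim P_g > \frac{d-r+1}{r}$, i.e. $r(N+1) > d+1$, is exactly what forces failure. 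The main obstacle will be carrying out this vanishing-multiplicity bookkeeping at the primitive $r$-th roots of unity carefully — that is where the inequality $\dim P_g > (d-r+1)/r$ enters — and then deducing the "in particular" clause by taking $g$ a reflection not fixing a point in $M\times 1$, so that $\dim P_g = d-1$, $\ind(P_g) = r = 2$ (by Lemma~\ref{l:vertices}, $2P_g$ is a lattice polytope, and $r>1$ precisely because $g$ fixes no point of $M\times 1$), and then $\dim P_g = d-1 > \frac{d-1}{2}$ holds whenever $d\ge 2$, giving the claim.
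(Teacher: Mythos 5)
Your second attempt is essentially the paper's argument, and your false start correctly diagnoses why a pole count at $t=1$ alone cannot work (the pole order there is exactly $\dim P_g+1$ on both sides). But as written the decisive step is still open. You reduce the problem to showing that $(1-t^r)^{N+1}$ cannot divide $(1-t)\det(I-\rho(g)t)\,N_g(t)$, and you propose to bound the multiplicity of a primitive $r$-th root of unity $\zeta$ as a root of $(1-t)\det(I-\rho(g)t)$ --- but you never rule out the possibility that $N_g(t)$ itself vanishes to high order at $\zeta$ and supplies the missing factors; ``nonzero leading-ish structure'' does not address this. That non-vanishing is the one fact the whole proof hinges on, and it is exactly where the hypothesis that $rP_g$ is a lattice polytope gets used: since $f_{P_g}(m)=0$ unless $r\mid m$ and $f_{P_g}(rk)=f_{rP_g}(k)$, one has $N_g(t)=h^*_{rP_g}(t^r)$, so for every $r$-th root of unity $\zeta$ one gets $N_g(\zeta)=h^*_{rP_g}(1)=(\dim P_g)!\,\vol(rP_g)>0$. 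With that in hand, the multiplicity bookkeeping you defer collapses to a one-line degree count, which is how the paper phrases it: cancelling $(1-t)^{N+1}$ from numerator and denominator, polynomiality of $\phi[t](g)$ would force $(1+t+\cdots+t^{r-1})^{N+1}$ to divide $\det(I-\rho(g)t)_{(M^g)^\perp}$, whose degree is $d-\dim P_g<(r-1)(\dim P_g+1)$ precisely because $\dim P_g>(d-r+1)/r$. Your handling of the ``in particular'' clause (a reflection with no fixed lattice point in $M\times 1$ gives $r=2$ and $\dim P_g=d-1$, so the inequality holds for $d\ge 2$) matches the paper's.
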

\begin{proof}
Recall from Section~\ref{s:basicEhrhart} that $f_{P_g}(m) = 0$ unless $r | m$. 
Hence, by Lemma~\ref{l:dimension}, 
\[
\sum_{m \ge 0} f_{P_g}(m) t^m = \frac{h_{rP_g}^*(t^r)}{(1 - t^r)^{\dim P_g + 1}} = 
\frac{h_{rP_g}^*(t^r)\det(I - \rho(g)t)_{(M^g)^\perp}}{(1 - t)\det(I - \rho(g)t)(1 + t + \cdots + t^{r - 1})^{\dim P_g + 1}},
\]
where $\det(I - \rho(g)t) = (1 - t)^{\dim P_g}\det(I - \rho(g)t)_{(M^g)^\perp}$ (cf. Proposition~\ref{p:rational}). If $\zeta$ is an $r^\mth$ root of unity, then 
$\zeta$ is not a root of $h_{rP_g}^*(t^r)$ 
(in fact, $h_{rP_g}^*(1)$ is equal to $(\dim P_g)!$ times the volume of $rP_g$). 
Hence if $\phi[t]$ is a polynomial, then $(1 + t + \cdots + t^{r - 1})^{\dim P_g + 1}$ divides $\det(I - \rho(g)t)_{(M^g)^\perp}$. However, 
$\det(I - \rho(g)t)_{(M^g)^\perp}$ has degree $d - \dim P_g < (r - 1)(\dim P_g + 1)$, a contradiction. 

If $g^2 = 1$, then $2P_g$ is a lattice polytope by Remark~\ref{r:lattice} below. If $g$ is a reflection that doesn't fix a point in $M \times 1$, then $r = 2$, $\dim P_g = d - 1$, and the second statement follows. 
\end{proof}

\begin{remark}\label{r:lattice}
With the notation of the above lemma, if $g^r$ acts trivially on $M$, then $rP_g$ is a lattice polytope by  Lemma~\ref{l:vertices}. 
\end{remark}

\begin{example}\label{e:badexample}
Let $G = \Z/2\Z$ with generator $\tau$, and
consider the representation of $G$ on $M = \Z^3$ (cf. Example~\ref{e:breakdown}) via
\begin{displaymath}
\tau \mapsto A = \left(\begin{array}{c c c}
-1 & 0 &  1 \\ 
0 & 1  & 0 \\
0 & 0  & 1 
\end{array}\right). 
\end{displaymath}
If $P$ is the square with vertices $(0,0,1)$, $(1,0,1)$, $(0,1,1)$ and $(1,1,1)$, then $P$ is $G$-invariant.
Since $\tau$ is a reflection with no fixed points in $M \times 1$, Lemma~\ref{l:bad} implies that $\phi[t]$ is not a polynomial. Similarly, one can show that $\phi_{mP}[t]$ is not a polynomial when $m$ is odd. 
By taking the convex hull of $P$ and the origin, Remark~\ref{r:pyramid} implies that one obtains a lattice polytope with a $G$-fixed point such that $\phi[t]$ is not a polynomial. 

\end{example}

\begin{example}\label{e:badexample2}
We present an example to show that $\phi[t]$ need not be a polynomial when $P$ contains a $G$-fixed interior lattice point. As in the previous example, let $G = \Z/2\Z$ with generator $\tau$, and
consider the representation of $G$ on $M = \Z^3$ via
\begin{displaymath}
\tau \mapsto A = \left(\begin{array}{c c c}
-1 & 0 &  1 \\ 
0 & 1  & 0 \\
0 & 0  & 1 
\end{array}\right). 
\end{displaymath}
Let $P$ be the $G$-invariant, $3$-dimensional lattice polytope with vertices $\pm(0,0,1)$, $\pm(1,0,1)$, $\pm(0,1,1)$ and $\pm(1,1,1)$, and observe that the origin is a $G$-fixed lattice point in the interior of $P$. In fact, $P$ is reflexive and has $h^*$-polynomial $h^*(t) = 1 + 5t + 5t^2 + t^3$ (cf. Corollary~\ref{c:reflexive}). If $M^\tau$ denotes the lattice fixed by $\tau$, then we have an isomorphism
$M^\tau \cong \Z^2, \; (1,0,2) \mapsto e_1, \; (0,1,0) \mapsto e_2$. Under this isomorphism, $P_\tau$ corresponds to the rational polytope with vertices $\pm \frac{e_1}{2} $ and $\pm  (\frac{e_1}{2} + e_2)$. 
One computes that $2P_\tau$ is a $2$-dimensional lattice polytope with $h^*_{2P_\tau}(t) = 1 + 6t + t^2$ and $f_{P_\tau}(m) = f_{2P_\tau}(\lfloor \frac{m}{2} \rfloor)$, and hence
\[
\sum_{m \ge 0} f_{P_\tau}(m) t^m = \frac{(1 + t)h_{2P_\tau}^*(t^2)}{(1 - t^2)^{3}} = 
\frac{1 + 6t^2 + t^4}{(1 - t)\det(I - tA)(1 + t)}. 
\]
We conclude that 
 if $\chi: G \rightarrow \C$ denotes the linear character sending $\tau$ to $-1$, then
 \[
 \phi[t] = \frac{1 + 3t + 8t^2 + 3t^3 + t^4}{1 + t} + \frac{t(3 + 2t + 3t^2)}{1 + t} \chi. 
 \]
\end{example}

We say that $\phi[t]$ is \define{effective} if all the virtual representations $\phi_i$ are effective representations. Note that if $\phi[t]$ is effective, then $\phi[t]$ is a polynomial. For example, 
if $G = 1$, then $\phi[t] = h^*(t)$ is a polynomial with non-negative coefficients, and if $P$ is a simplex, then Proposition~\ref{p:simplex} implies that 
the representations $\phi_i$ are effective.  
For the remainder of the section we will provide criterion that guarantee that $\phi[t]$ is effective. 


We briefly recall some basic facts about toric varieties, and refer the reader to \cite{FulIntroduction} and \cite{TevCompactifications} for details. The lattice polytope $P$ determines a  complex, projective $d$-dimensional toric variety $Y = Y_P$ and an ample line bundle $L$ on $Y$. The toric variety $Y$ is a disjoint union of tori $T_Q$ of dimension $\dim Q$, indexed by the faces $Q$ of $P$. 
A section $s \in \Gamma(Y, L)$ determines a hypersurface $X = X(s)$ in $Y$, and we say that $X$ is \define{non-degenerate} if $X \cap T_Q$ is a smooth (possibly empty) hypersurface in $T_Q$ for all $Q \subseteq P$. Non-degenerate hypersurfaces were first studied by Khovanski{\u\i} \cite{KhoNewton}, 
and,
recently, have been extended to the notion of a \emph{Sch\"on} subvariety of a torus by Televev 
 \cite{TevCompactifications}.

Equivalently, a hypersurface  $X^\circ = \{ f = \sum_{ u \in P \cap M} a_u \chi^u = 0 \} \subseteq T = T_P$ 
is \emph{non-degenerate with Newton polytope $P$} if $\{ f_Q =  \sum_{ u \in Q \cap M} a_u \chi^u = 0 \} \subseteq T$ is a smooth (possibly empty) hypersurface in $T$ for all $Q \subseteq P$. One can show that these two notions of non-degenerate coincide. That is, $X = \bar{X^\circ}$ is non-degenerate if and only if $X^\circ = X \cap T$ is non-degenerate. The key point in proving this equivalence is the fact that 
\begin{equation}\label{e:nondeg}
\{ f_Q = 0 \} \cong (X \cap T_Q) \times (\C^*)^{d - \dim Q}. 
\end{equation}

Recall that $G$ acts on $M$ and leaves $P$ invariant `up to translation'. There is an induced action of 
$G$ on $Y$ via toric morphisms satisfying $g^* L \cong L$ for all $g \in G$. Let 
$\Gamma(Y, L)^G \subseteq \Gamma(Y, L)$ denote the sub-linear system of $G$-invariant sections of $L$. The following result is proved in \cite{YoRepresentations} using the theory of mixed Hodge structures. 

\begin{theorem}\cite{YoRepresentations}\label{t:nondeg}
With the notation above, if there exists a $G$-invariant non-degenerate hypersurface with Newton polytope $P$, then 
$\phi[t]$ is effective. 
In particular, this assumption holds if the linear system $\Gamma(Y, L)^G$ on $Y$ is base point free. 
\end{theorem}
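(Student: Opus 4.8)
The first assertion is \cite{YoRepresentations}, so it remains to explain how the second assertion is deduced from it via Bertini's theorem \cite[Corollary~10.9]{HarAlgebraic}. Since the $G$-action on $M' = M \oplus \Z$ induces a $G$-linearization of $L$, the group $G$ acts linearly on the finite-dimensional vector space $\Gamma(Y, L)$, and a point of $\P(\Gamma(Y,L)^G)$ determines, up to scalar, a $G$-invariant section $s$ and hence a $G$-invariant hypersurface $X(s) = \{ s = 0 \}$. It therefore suffices to show that, when $\Gamma(Y,L)^G$ is base point free, a general such $s$ is non-degenerate, i.e. $X(s) \cap T_Q$ is a smooth (possibly empty) hypersurface in $T_Q$ for every face $Q$ of $P$.

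Fix a face $Q$ of $P$. Since $\Gamma(Y,L)^G$ is base point free on $Y$, its restriction to the orbit $T_Q$ is base point free, and $L$ restricts to the trivial bundle there, so the linear system defines a morphism $\psi_Q \colon T_Q \to \P^N$ with $N + 1 = \dim \Gamma(Y,L)^G$, under which $X(s) \cap T_Q = \psi_Q^{-1}(H_s)$ for $H_s$ the hyperplane corresponding to $s$. Because $T_Q$ is smooth and we work over $\C$, Bertini's theorem applied to the morphism $\psi_Q$ shows that for $s$ in a dense open subset $U_Q \subseteq \P(\Gamma(Y,L)^G)$ the preimage $\psi_Q^{-1}(H_s)$ is smooth; if $\psi_Q$ is non-constant this is a smooth hypersurface in $T_Q$, while if $\psi_Q$ is constant a general $H_s$ misses its image and $X(s) \cap T_Q$ is empty. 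In either case the non-degeneracy condition at $Q$ holds for every $s \in U_Q$.

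Since $P$ has only finitely many faces and $\P(\Gamma(Y,L)^G)$ is irreducible, the intersection $\bigcap_Q U_Q$ is a nonempty dense open subset; any $s$ in it defines a $G$-invariant non-degenerate hypersurface with Newton polytope $P$, and the first assertion then yields that $\phi[t]$ is effective.

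The main point to handle carefully is that Bertini must be applied on the quasi-affine orbits $T_Q$, which are smooth, rather than on the possibly singular toric variety $Y$: this requires the formulation of Bertini for a morphism from a smooth variety to projective space, together with the separate (easy) treatment of the degenerate case where the invariant linear system becomes constant on some $T_Q$. One should also record at the outset that $G$ acts genuinely linearly — not merely projectively — on $\Gamma(Y,L)$, which is exactly what the presentation of $Y$ via the lattice $M' = M \oplus \Z$ of Section~\ref{s2} provides.
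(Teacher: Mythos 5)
Your argument is correct and is essentially the paper's own: the paper likewise reduces the second assertion to a face-by-face application of Bertini (Hartshorne, Corollary~10.9 and Remark~10.9.2) to the base-point-free invariant linear system, the only cosmetic difference being that the paper phrases the face conditions via the systems $\{f_Q\}$ on the big torus $T$ together with the product decomposition $\{f_Q=0\}\cong (X\cap T_Q)\times(\C^*)^{d-\dim Q}$, whereas you apply Bertini directly on each orbit $T_Q$. Your explicit handling of the constant case of $\psi_Q$ (in particular at vertices, which forces the general member to have Newton polytope exactly $P$) matches what the paper leaves implicit.
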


\begin{remark}\label{r:bpf}
The second statement of this theorem is an easy application of Bertini's theorem (see Corollary~10.9 and Remark~10.9.2 in \cite{HarAlgebraic}). In order to compute examples, we give the following condition that, using \eqref{e:nondeg}, one can show is equivalent to the condition that $\Gamma(Y, L)^G$ is base point free: for each face $Q \subseteq P$, the linear system 
\begin{equation}\label{e:bpf}
\{ \sum_{u \in Q \cap M} a_u \chi^u \mid a_u = a_{u'} \textrm{ if } u, u' \textrm{ lie in the same $G_Q$-orbit} \} 
\end{equation}
 on $T$ is base point free, where $G_Q$ denotes the stabilizer of $Q$.
\end{remark}

\begin{remark}
Continuing with the notation of Remark~\ref{r:bpf} above, observe that
 in order to guarantee the existence of a $G$-invariant non-degenerate hypersurface, it is enough to check condition \eqref{e:bpf} holds for faces $Q \subseteq P$ with $\dim Q > 1$. Indeed, if $Q$ is a vertex, then the condition holds automatically since the non-zero elements of the linear system define the empty hypersurface in $T$. If $\dim Q = 1$, then we claim that an element (and hence a non-empty open subset) of the linear system 
  \eqref{e:bpf} defines a smooth hypersurface in $T$. This follows since $T_Q \cong \C^* = \Spec \C[t^{\pm 1}]$, and there exists an isomorphism $T \cong \C^* \times (\C^*)^{d - 1}$ such that \eqref{e:bpf} is isomorphic to a sub-linear system of $\{ \sum_{i = 0}^s a_i t^i = 0 \}$. The claim follows since the polynomial $\{ \sum_{i = 0}^s t^i = 0 \}$ has $s$ distinct roots. 
\end{remark}

The following corollary is immediate from the above two remarks. 

\begin{corollary}\label{c:fixed}
If every  face $Q$ of $P$ with $\dim Q > 1$ contains a lattice point 
that is $G_Q$-fixed, where $G_Q$ denotes the stabilizer of $Q$,  then 
$\phi[t]$ is effective. 
\end{corollary}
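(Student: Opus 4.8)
The plan is to deduce this as an immediate consequence of Theorem~\ref{t:nondeg} together with the two remarks that follow it. Recall that the second statement of Theorem~\ref{t:nondeg} asserts that $\phi[t]$ is effective whenever the linear system $\Gamma(Y,L)^G$ on $Y$ is base point free. So it suffices to verify base point freeness of $\Gamma(Y,L)^G$ under the stated hypothesis. By Remark~\ref{r:bpf}, this in turn is equivalent to checking that for every face $Q \subseteq P$ the linear system \eqref{e:bpf} on the torus $T$ — consisting of the Laurent polynomials $\sum_{u \in Q \cap M} a_u \chi^u$ with $a_u = a_{u'}$ whenever $u,u'$ lie in the same $G_Q$-orbit — is base point free. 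Moreover, by the remark immediately following Remark~\ref{r:bpf}, it is enough to do this for faces $Q$ with $\dim Q > 1$, since vertices contribute the empty hypersurface and edges reduce to a single-variable polynomial system whose generic member has distinct roots.

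First I would fix a face $Q$ of $P$ with $\dim Q > 1$ and, using the hypothesis, choose a lattice point $u_0 \in Q \cap M$ that is $G_Q$-fixed. Since the $G_Q$-orbit of $u_0$ is the singleton $\{u_0\}$, the choice $a_{u_0} = 1$ and $a_u = 0$ for all $u \ne u_0$ respects the constraints defining \eqref{e:bpf}; hence the monomial $\chi^{u_0}$ is a legitimate member of that linear system. But $\chi^{u_0}$ is a unit in the coordinate ring of $T$, so it vanishes nowhere on $T$, and therefore \eqref{e:bpf} has no base points on $T$. Running this over all faces $Q$ with $\dim Q > 1$ and invoking the reduction described above, we conclude that $\Gamma(Y,L)^G$ is base point free, and then Theorem~\ref{t:nondeg} gives that $\phi[t]$ is effective.

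There is no serious obstacle here; the one point that requires care is the reduction in the remark following Remark~\ref{r:bpf}, namely that faces $Q$ of dimension $\le 1$ need not be checked — this is exactly what lets the hypothesis be imposed only on faces with $\dim Q > 1$, and I would simply cite it rather than reprove it. Everything else is the elementary observation that a $G_Q$-fixed lattice point produces an invariant monomial, which is automatically nonvanishing on the open torus.
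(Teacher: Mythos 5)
Your proof is correct and follows the same route as the paper, which simply states that the corollary is immediate from Remark~\ref{r:bpf} and the remark following it. The one detail you make explicit — that a $G_Q$-fixed lattice point $u_0$ yields the monomial $\chi^{u_0}$ as a member of the linear system \eqref{e:bpf}, which is nonvanishing on the torus — is exactly the observation the paper leaves implicit.
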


\begin{remark}
The existence of a $G$-fixed lattice point is not necessary for $\phi[t]$ to be effective. For example, if $G = \Sym_{d + 1}$ acts on $\Z^{d + 1}$ via the standard representation and $P$ is the convex hull of the basis vectors 
$e_0, \ldots, e_d$, then $P$ has no $G$-fixed lattice points and $\phi[t] = 1$ by Proposition~\ref{p:simplex}. 
\end{remark}

We have the following two applications. 

\begin{corollary}\label{c:dim2}
If $\dim P = 2$ and $P$ contains a $G$-fixed lattice point, then $\phi[t]$ is effective. 
\end{corollary}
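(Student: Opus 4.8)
The plan is to deduce Corollary~\ref{c:dim2} directly from Corollary~\ref{c:fixed}. Under the hypothesis $\dim P = 2$, the faces $Q$ of $P$ with $\dim Q > 1$ are exactly the single face $Q = P$ itself. Hence Corollary~\ref{c:fixed} requires only that $P$ contain a lattice point fixed by $G_P = G$ (since $P$ is $G$-invariant, its stabilizer is all of $G$). But that is precisely the hypothesis of Corollary~\ref{c:dim2}. So the entire content of the proof is the observation that in dimension $2$ there is only one face of dimension $>1$, namely the polytope itself.

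More concretely, I would argue as follows. Since $P$ is $2$-dimensional, every proper face of $P$ has dimension $0$ or $1$, i.e.\ is a vertex or an edge. Therefore the only face $Q \subseteq P$ with $\dim Q > 1$ is $Q = P$, whose stabilizer $G_Q$ equals $G$ because $P$ is $G$-invariant (as a subset of $M_\R$, up to translation the statement about fixed lattice points is unaffected, and a lattice point fixed by $G$ in the sense relevant to Corollary~\ref{c:fixed} is exactly a $G$-fixed lattice point of $P$). By hypothesis $P$ contains a $G$-fixed lattice point, so the hypothesis of Corollary~\ref{c:fixed} is satisfied, and we conclude that $\phi[t]$ is effective.

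There is no real obstacle here; the only thing to be slightly careful about is matching the phrasing "contains a $G$-fixed lattice point" in the statement of Corollary~\ref{c:dim2} with the phrasing "contains a lattice point that is $G_Q$-fixed" in Corollary~\ref{c:fixed} — but with $Q = P$ these coincide verbatim. The proof is therefore a one-line appeal to the previous corollary.

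\begin{proof}
Since $\dim P = 2$, every face $Q$ of $P$ with $\dim Q > 1$ is equal to $P$ itself, and its stabilizer $G_P$ equals $G$ because $P$ is $G$-invariant. By hypothesis $P$ contains a $G$-fixed lattice point, so the hypothesis of Corollary~\ref{c:fixed} is satisfied. Hence $\phi[t]$ is effective.
\end{proof}
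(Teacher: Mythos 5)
Your proof is correct and is exactly the paper's argument: the paper proves Corollary~\ref{c:dim2} by the one-line observation that it "follows immediately from Corollary~\ref{c:fixed}," and your justification — that in dimension $2$ the only face of dimension $>1$ is $P$ itself, with stabilizer $G$ — is precisely the reason that appeal works.
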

\begin{proof}
This follows immediately from Corollary~\ref{c:fixed}. 
\end{proof}

\begin{remark}
The above corollary is false if $\dim P > 2$ by Example~\ref{e:badexample} (and Example~\ref{e:badexample2}). 
\end{remark}

The corollary below should be compared with Remark~\ref{r:multiple}. 

\begin{corollary}\label{c:multiple}
If the order of $G$ divides $m$, then $\phi_{mP}[t]$ is effective. 
\end{corollary}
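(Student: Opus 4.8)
The plan is to derive this from Corollary~\ref{c:fixed}: I will check that, when $|G|$ divides $m$, every face $Q$ of $mP$ with $\dim Q>1$ contains a $G_Q$-fixed lattice point, so that the dilate $mP$ satisfies the hypothesis of that corollary.

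First I would unwind how the setup of Section~\ref{s2} applies to the dilate $mP$. The cone over $mP$ is literally the same cone $C$ over $P$ inside $M'$, simply re-graded so that its degree-$k$ piece consists of the lattice points of $M'$ lying in $k(mP)=(km)P$; in particular the lattice $M$ and the representation $\rho$ are unchanged, and $\sum_{k}\chi_{k(mP)}t^{k}=\phi_{mP}[t]/((1-t)\det(I-\rho t))$ with the same $\rho$. Consequently, for a face $Q$ of $mP$, a ``lattice point of $Q$'' in the sense of Corollary~\ref{c:fixed} is just a point of $M'$ contained in $Q$, and $G_Q$-fixedness refers to the (genuine) linear action of $G$ on $M'$. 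This identification is routine, but it is the only point in the argument that needs care.

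With that in place, the construction is short. Every face $Q$ of $mP$ has the form $Q=mQ'$ for a unique face $Q'$ of $P$, and since dilation commutes with the $G$-action on $M'$ one has $G_Q=G_{Q'}$. Choose a vertex $v$ of $Q'$, a lattice point of $M'$ at height $1$, and form the orbit sum $\xi=\sum_{g\in G_{Q'}}g\cdot v\in M'$. Then $\xi$ is $G_{Q'}$-fixed, lies at height $|G_{Q'}|$, and $\tfrac{1}{|G_{Q'}|}\xi$, being an average of vertices of $Q'$, lies in $Q'$. Now put $u=\tfrac{m}{|G_{Q'}|}\,\xi$. Since $|G_{Q'}|$ divides $|G|$, which divides $m$, the coefficient $\tfrac{m}{|G_{Q'}|}$ is a positive integer, so $u\in M'$; moreover $u$ is $G_{Q'}$-fixed, it lies at height $m$, and $u=m\cdot\bigl(\tfrac{1}{|G_{Q'}|}\xi\bigr)\in mQ'=Q$. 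Hence $u$ is a $G_Q$-fixed lattice point of $Q$. Applying this to all faces $Q$ of $mP$ (in particular those of dimension greater than $1$) and invoking Corollary~\ref{c:fixed} gives that $\phi_{mP}[t]$ is effective. I expect no genuine obstacle beyond the bookkeeping of the previous paragraph: the orbit-sum does everything, and the divisibility $|G_{Q'}|\mid|G|\mid m$ is exactly what keeps $u$ integral. (For comparison, Remark~\ref{r:multiple} assumes only that the exponent of $G$ divides $m$, and then deduces via Lemma~\ref{l:vertices} and Lemma~\ref{l:polynomial} the weaker conclusion that $\phi_{mP}[t]$ is merely a polynomial.)
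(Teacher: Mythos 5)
Your proposal is correct and follows essentially the same route as the paper: the paper's proof also fixes a face $Q'$ of $P$, averages a $G_{Q'}$-orbit of lattice points of $Q'$ to get a $G_{Q'}$-fixed rational point, multiplies by $m$ to clear the denominator using $|G_{Q'}|\mid |G|\mid m$, and invokes Corollary~\ref{c:fixed}. Your orbit-sum of a vertex divided by $|G_{Q'}|$ is the same construction up to the stabilizer factor, and your preliminary remarks on transporting the setup to the dilate $mP$ are correct bookkeeping that the paper leaves implicit.
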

\begin{proof}
As in the proof of Lemma~\ref{l:vertices}, fix a face $Q$ of $P$ and let $\{ v_i \mid i \in \iota \}$ denote a $G_Q$-orbit of $Q \cap M$.  The rational point $v_\iota = \frac{1}{|\iota|} \sum_{i \in \iota} v_i$ 
is $G_Q$-fixed and $m v_\iota$ is a lattice point if  $|\iota|$ divides $m$. 
Finally note that $|\iota |$ divides $|G_Q|$ that divides  $|G|$, and apply  Corollary~\ref{c:fixed}. 
\end{proof}


\begin{example}
In Example~\ref{e:badexample}, $G = \Z/2\Z$, $\dim P = 2$, and we see that $\phi_{mP}[t]$ is effective if and only if $m$ is an even, positive integer. 
\end{example}

\begin{example}\label{e:proper}
We say that the action of $G$ on $P$ is \define{proper} if for any proper face $Q$ of $P$ and any element $g$ in the stabilizer  $G_Q$ of $Q$, $g$ fixes $Q$ pointwise (cf. \cite[Section~1]{SteSome}). 
In this case, if $P$ contains the origin then one verifies that $P_g$ is a lattice polytope for all $g$ in $G$,
and $\phi[t]$ is effective by Corollary~\ref{c:fixed}. For an example, we refer the reader to  Corollary~\ref{c:Weyl}. 
\end{example}

\section{Group actions on cohomology}\label{s:cohomology}

In this section we consider a class of polytopes for which $\phi[t]$ is effective and has a natural geometric interpretation. We continue with the notation of Section~\ref{s2}, Section~\ref{s:Ehrhart} and Section~\ref{s:hstar}.

Let $\triangle$ be a smooth, $G$-invariant, $d$-dimensional fan in $M_\R$, and let $X = X(\triangle)$ denote the associated toric variety. Note that $X$ has no odd cohomology, and the action of $G$ on $X$ induces a representation of $G$ on $H^*(X; \C)$. 
If $|\triangle|$ denotes the support of $\triangle$, then let $\psi: |\triangle| \rightarrow \R$ be the piecewise linear function with respect to $\triangle$ that has value $1$ at the primitive lattice points of the rays of $\triangle$. 
Recall that $P$ is \emph{reflexive} if the origin is its unique interior lattice point, and every non-zero lattice point in $M$ lies in the boundary of $mP$  for some positive integer $m$. 

\begin{proposition}\label{p:cohomology}
With the notation above, if $P = \{ v \in M_\R \mid \psi(v) \le 1 \}$ is convex, then $P$ is a $G$-invariant lattice polytope and $\phi_i$ is isomorphic to the $G$-representation $H^{2i}(X; \C)$. In particular, if 
$|\triangle| = M_\R$, then $P$ is reflexive, and
the multiplicities of a fixed irreducible representation in the representations $\phi_i$ form a symmetric, unimodal sequence. 
\end{proposition}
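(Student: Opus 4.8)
The plan is to exhibit an isomorphism of graded $G$-representations between $\bigoplus_i \phi_i$ and $\bigoplus_i H^{2i}(X;\C)$, and then to deduce reflexivity and symmetry/unimodality from that identification. First I would verify that $P = \{v \in M_\R \mid \psi(v) \le 1\}$, assumed convex, is a lattice polytope: since $\triangle$ is smooth, the primitive ray generators of a maximal cone form a $\Z$-basis of $M$, so on each such cone $\psi$ is a linear function taking value $1$ on those basis vectors; hence the facet of $P$ lying over that cone has those basis vectors (more precisely their dual description) as lattice vertices, and $P$ is a lattice polytope. Equivariance is immediate since $\triangle$ and hence $\psi$ are $G$-invariant. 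Here I would invoke Remark~\ref{r:smooth}: the $h^*$-polynomial of $P$ equals the $h$-polynomial of $\triangle$, $h^*_P(t) = \sum_i f_i t^i (1-t)^{d-i}$, which is exactly the Poincaré polynomial $\sum_i \dim H^{2i}(X;\C)\, t^i$ of the smooth toric variety $X$. This already pins down dimensions; the work is to upgrade it to a $G$-equivariant statement.

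Next I would establish the equivariant refinement. The cleanest route is to apply Proposition~\ref{p:cohomology}'s hypotheses cone-by-cone, i.e. to compute $\phi[t](g)$ for each $g \in G$ and compare with the Lefschetz-type character $\sum_i \tr(g \mid H^{2i}(X;\C)) t^i$. For a fixed $g$, the fixed subfan $\triangle^g$ (cones of $\triangle$ fixed setwise and pointwise by $g$) is again a smooth fan in $M^g_\R$, and $X(\triangle)^g$ is the toric variety it defines; by the localization/Lefschetz principle for torus-equivariant cohomology of smooth toric varieties one gets $\sum_i \tr(g \mid H^{2i}(X;\C))\, t^i$ expressed through $\triangle^g$. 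On the combinatorial side, $P_g = P \cap M^g_\R$ is precisely $\{v \in M^g_\R \mid \psi|_{M^g_\R}(v) \le 1\}$, a convex polytope whose face fan is refined by $\triangle^g$; moreover, since $\triangle^g$ is smooth, $P_g$ is a lattice polytope, so Lemma~\ref{l:polynomial}'s computation gives $\phi[t](g) = h^*_{P_g}(t)\det(I - \rho(g)t)_{(M^g)^\perp}$. Applying Remark~\ref{r:smooth} to $P_g$ identifies $h^*_{P_g}(t)$ with $h_{\triangle^g}(t)$, the Poincaré polynomial of $X(\triangle^g)$, and the factor $\det(I - \rho(g)t)_{(M^g)^\perp}$ accounts exactly for the normal-bundle weights in the Atiyah–Bott/localization formula. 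Matching these two expressions for all $g$ gives $\phi_i \cong H^{2i}(X;\C)$ in $R(G)$.

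For the final two assertions: if $|\triangle| = M_\R$ then $X$ is a complete smooth toric variety, $P$ is then easily seen to be reflexive (the origin is the unique interior lattice point because $\psi$ is a genuine support function with all ray values $1$, and every nonzero lattice vector lies in some cone, hence on the boundary of a suitable dilate of $P$). Then $X$ is a complete smooth toric variety, so its cohomology carries the Hard Lefschetz structure: cup product with an ample class gives $H^{2i}(X;\C) \xhookrightarrow{} H^{2i+2}(X;\C)$ for $i < d/2$ and isomorphisms $H^{2i}(X;\C) \cong H^{2d-2i}(X;\C)$. Since $X$ is a $G$-variety and one may choose a $G$-invariant ample (Kähler) class (average any ample class over $G$), these Lefschetz maps are $G$-equivariant, so for each fixed irreducible $\chi$ the multiplicity sequence $i \mapsto \langle \chi, H^{2i}(X;\C)\rangle$ is symmetric and unimodal; transporting along $\phi_i \cong H^{2i}(X;\C)$ gives the claim for $\phi[t]$.

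The main obstacle I anticipate is the second paragraph: rigorously matching the combinatorial formula $\phi[t](g) = h^*_{P_g}(t)\det(I - \rho(g)t)_{(M^g)^\perp}$ with the equivariant/Lefschetz formula for $\tr(g \mid H^*(X;\C))$, i.e. checking that $P_g$'s face fan really is refined by $\triangle^g$ with matching primitive ray vectors (so that Remark~\ref{r:smooth} applies to $P_g$ verbatim), and that the normal-weight bookkeeping in localization produces precisely the factor $\det(I - \rho(g)t)_{(M^g)^\perp}$. The rest — lattice-polytope-ness, reflexivity, and the Hard Lefschetz argument for unimodality — is routine given the tools already assembled in the paper.
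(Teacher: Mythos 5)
Your strategy --- compute $\phi[t](g)$ element by element and match it against the graded trace of $g$ on $H^*(X;\C)$ --- is genuinely different from the paper's, but its central step has two concrete gaps. First, the claim that $P_g$ is a lattice polytope (so that Lemma~\ref{l:polynomial} and Remark~\ref{r:smooth} apply to $P_g$) does not follow from smoothness of $\triangle^g$: smoothness says nothing about where $\psi|_{M^g_\R}$ takes the value $1$, and the primitive generator of a ray of $\triangle^g$ is typically a point where $\psi>1$, making the corresponding vertex of $P_g$ a non-lattice rational point. Concretely, let $\triangle$ be the fan of $\P^2$ with rays $e_1,e_2,-e_1-e_2$ and let $g$ swap $e_1$ and $e_2$; then $P_g$ is the segment from $-e_1-e_2$ to $\tfrac12(e_1+e_2)$, which is not a lattice polytope in $M^g=\Z(e_1+e_2)$, so $h^*_{P_g}$ and the identity $\phi[t](g)=h^*_{P_g}(t)\det(I-\rho(g)t)_{(M^g)^\perp}$ are simply unavailable. (The analogous identity in Corollary~\ref{c:Weyl} relies on the special ``proper'' structure of Coxeter fans, not on the hypotheses of this proposition.) Second, the geometric side is not set up correctly: $X^g$ is in general strictly larger than $X(\triangle^g)$ --- already on the open torus, $g=-\mathrm{id}$ fixes $2^d$ points while $\triangle^g=\{0\}$ --- and Lefschetz/Atiyah--Bott localization only yields alternating sums of traces, not the individual graded traces $\tr(g\mid H^{2i}(X;\C))$, without substantial extra input. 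In effect, the identity you propose to ``check'' by localization is the Procesi--Dolgachev--Lunts--Stembridge character formula itself, i.e., the content of the proposition.

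The paper avoids all per-element analysis: it uses the $G$-equivariant isomorphism of the deformed group ring $\C[M]^{\triangle}$ with $H^*_T(X;\C)$, whose degree-$m$ piece is $\chi_{mP}-\chi_{(m-1)P}$ as a $G$-representation, together with the $G$-equivariant splitting $H^*_T(X;\C)\cong H^*(X;\C)\otimes\Sym^\bullet N_\C$, the isomorphism $M_\C\cong N_\C$, and Lemma~\ref{l:exterior}, to conclude $\phi_i\cong H^{2i}(X;\C)$ directly. Your remaining points --- that $P$ is a $G$-invariant lattice polytope, reflexivity when $|\triangle|=M_\R$, and the $G$-equivariant Hard Lefschetz argument for symmetry and unimodality --- agree with the paper and are fine, but the core identification needs to be replaced by an argument along the paper's lines (or by a correctly executed holomorphic Lefschetz computation on each $\Omega^p$ over all components of $X^g$, which is far more work).
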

\begin{proof}
We refer the reader to \cite[Section~1]{BriPoincare} and \cite{PayEquivariant} for basic facts on the equivariant cohomology of toric varieties. 
Let $\C[M]^{\triangle}$ denote the \emph{deformed group ring} of $\triangle$. It has a $\C$-vector space basis $\{ x^v \mid v \in |\triangle| \cap M \}$ and multiplication 
\begin{equation*}
x^v \cdot x^w =   \left\{\begin{array}{cl} 
x^{v + w} & \text{if } v,w \textrm{ lie in a common cone in } \triangle
 \\ 0 & \text{otherwise}. \end{array}\right. 
\end{equation*}
The action of $G$ on the $T$-fixed points of $X$ induces (via GKM localization) an action of $G$ on the $T$-equivariant cohomology ring $H^*_T X$. 
In particular,  there is a natural $G$-equivariant, graded isomorphism $\C[M]^{\triangle} \cong H^*_T (X; \C)$ (in fact, both sides are naturally isomorphic to the complex Stanley-Reisner ring of $\triangle$). 
Here $\C[M]^{\triangle}$ inherits a natural grading from $\psi: |\triangle| \rightarrow \R$  (see the above discussion), and we consider $H^{2m}_T X$ to have degree $m$. In particular, the representation of $G$ on 
 $H^{2m}_T (X; \C)$ is isomorphic to $\chi_{mP} - \chi_{(m - 1)P}$ for all positive integers $m$. 

There is a natural $G$-invariant graded isomorphism  $H^*_T X \cong H^*X \otimes 
H^*_T (\pt)$, where if $N =  \Hom(M, \Z)$, then $H^*_T (\pt)$ is naturally isomorphic to $\Sym^\bullet N$. 
In particular, we have an isomorphism of graded, $G$-representations $H^*_T (X; \C) \cong H^*(X; \C) \otimes \Sym^\bullet N_\C$. Here, if we fix a basis for $M$ and $g \in G$ acts on $M$ via an integer matrix $A$, then $g$ acts on $N$ via the inverse transpose of $A$. 
If $\{ \lambda_i \}$ denote the eigenvalues of $A$, then the eigenvalues of $A^{-1}$ are the conjugates 
$\{ \bar{\lambda_i} \}$. Since $A$ is integer valued, we conclude that $A$ and the inverse transpose of $A$ have the same eigenvalues and hence we have an isomorphism of $G$-representations $M_\C \cong N_\C$. We conclude that we have the following equality in $R(G)[[t]]$
\[
(1 - t) \sum_{m \ge 0} \chi_{mP} t^m = \sum_{i = 0}^d H^{2i}(X; \C)t^i   \cdot \sum_{m \ge 0} \Sym^m M_\C t^m, 
\]
  and the result follows by Lemma~\ref{l:exterior}. The second statement is well-known and follows from the fact that the Hard Lefschetz theorem holds when $X$ is projective, and the observation that taking the cap product with a hyperplane class commutes with the action of $G$ on the cohomology of $X$ (see, for example, \cite[p. 64]{StaNumber}).  Finally, if $|\triangle| = M_\R$ and $P$ is convex, then $P$ is reflexive by definition. 
\end{proof}

\begin{remark}\label{r:convex}
In the above proposition, the assumption that $P = \{ v \in M_\R \mid \psi(v) \le 1 \}$ is convex is not essential. That is, with the notation above, one can easily extend the definition of $\phi[t]$ to the `star-shaped complex' $P = \{ v \in M_\R \mid \psi(v) \le 1 \}$, and then the proof of Proposition~\ref{p:cohomology} holds verbatim. 
\end{remark}

If $R$ is a reduced, crystallographic root system of rank $d$, then the hyperplanes orthogonal to the roots of $R$ determine a smooth, projective, $d$-dimensional fan $\triangle_R$ in the weight lattice $M$ of $R$, called the \define{Coxeter fan} of $R$. The associated toric variety $X_R = X_R(\triangle_R)$ is the normalization of the closure of a generic torus orbit in the homogeneous space $G/B$ associated to $R$, and the induced action of the Weyl group $W$ on the cohomology 
$H^*(X_R; \C)$ has been studied by 
Procesi~\cite{ProToric}, Stanley \cite[p. 529]{StaLog}, Dolgachev, Lunts~\cite{DLCharacter}, Stembridge~\cite{SteSome, SteEulerian} and Lehrer~\cite{LehRational}. 

\begin{remark}\label{r:Weyl}
Using an intricate case-by-case argument, Stembridge gave a description of the $W$-representation $H^*(X_R; \C)$ 
as an explicit (ungraded) permutation representation \cite{SteSome},
and 
proved that 
$\lef  \sum_{i = 0}^d H^{2i}(X_R; \C) t^i, 1 \rig = (1 + t)^d$ (see the proof of Lemma~3.2 in \cite{SteSome}).  
Lehrer proved that the sign representation of $W$ does not appear in $H^*(X_R; \C)$ \cite[Theorem~3.5 (iii)]{LehRational}, and gave a formula for the multiplicity of the reflection representation $M_\C$ in 
$H^{2i}(X_R; \C)$  \cite[Corollary~4.6]{LehRational}. 
\end{remark}

With the notation above, let $G = W$ 
and consider the action $\rho: W \rightarrow GL(M)$. 
 Let $\psi: M_\R \rightarrow \R$ be the piecewise linear function with respect to $\triangle_R$ that has value $1$ at the primitive lattice points of the rays of $\triangle_R$, and let 
$P = P_R = \{ v \in M_\R \mid \psi(v) \le 1 \}$. The action of $W$ on $P$ is proper in the sense of Example~\ref{e:proper}, and for every $w$ in $W$, $\triangle_R$ restricts to a non-singular fan $\triangle_R^w$ in the subspace $M^w$ of $M_\R$ fixed by $w$
(see Section~2 in \cite{SteSome}). Moreover, $P_w =  \{ v \in M^w \mid \psi^w(v) \le 1 \}$, where 
$\psi^w = \psi|_{M^w}$ is the piecewise  linear function that has value $1$ at the primitive lattice points of the rays of $\triangle_R^w$. 

We recover the character formula for the graded
$W$-representation $H^*(X_R; \C)$ due to Procesi, Dolgachev and Lunts, and Stembridge \cite[Corollary~1.6]{SteSome}. 

\begin{corollary}\label{c:Weyl}
With the notation above, $\phi[t] =  \sum_{i = 0}^d H^{2i}(X_R; \C) t^i$, and
\[
\phi[t](w) = \frac{ h_{\triangle_R^w}(t) \det(I - \rho(w)t) }{ (1 - t)^{\dim M^w} }. 
\]
\end{corollary}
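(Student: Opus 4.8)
The plan is to combine Proposition~\ref{p:cohomology} with Lemma~\ref{l:polynomial} (via Example~\ref{e:proper}) to get the first equality, and then to evaluate characters at each $w\in W$ using the fan-restriction structure recalled just above the statement, together with Remark~\ref{r:smooth}.

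First I would establish $\phi[t]=\sum_{i=0}^d H^{2i}(X_R;\C)t^i$. The fan $\triangle_R$ is smooth, $W$-invariant, $d$-dimensional, with $|\triangle_R|=M_\R$; the piecewise linear function $\psi$ is exactly the one in Proposition~\ref{p:cohomology}, and $P_R=\{v\mid\psi(v)\le 1\}$ is convex (it is the polytope whose normal fan is $\triangle_R$, a genuine polytope since $\triangle_R$ is projective). Hence Proposition~\ref{p:cohomology} applies verbatim and gives $\phi_i\cong H^{2i}(X_R;\C)$ as $W$-representations, which is the first assertion. (Since the action of $W$ on $P_R$ is proper — see Example~\ref{e:proper} — and $P_R$ contains the origin, $\phi[t]$ is a polynomial, so this sum is genuinely finite, consistent with the displayed formula.)

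Next I would compute $\phi[t](w)$ for a fixed $w\in W$. By Lemma~\ref{l:character}, $\sum_{m\ge 0}\chi_{mP_R}(w)t^m=\sum_{m\ge 0}f_{(P_R)_w}(m)t^m$, and since the action is proper with $0\in P_R$, Example~\ref{e:proper} (via Lemma~\ref{l:polynomial} and its proof) gives $\phi[t](w)=h^*_{(P_R)_w}(t)\cdot\det(I-\rho(w)t)_{(M^w)^\perp}$, where $\det(I-\rho(w)t)=(1-t)^{\dim M^w}\det(I-\rho(w)t)_{(M^w)^\perp}$. Now I use the structure recalled above the statement: $\triangle_R$ restricts to a non-singular fan $\triangle_R^w$ in $M^w$ with support $M^w$, and $(P_R)_w=\{v\in M^w\mid\psi^w(v)\le 1\}$ with $\psi^w$ the piecewise linear function taking value $1$ at the primitive ray generators of $\triangle_R^w$. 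By Remark~\ref{r:smooth}, $h^*_{(P_R)_w}(t)=h_{\triangle_R^w}(t)$. Substituting,
\[
\phi[t](w)=h_{\triangle_R^w}(t)\cdot\frac{\det(I-\rho(w)t)}{(1-t)^{\dim M^w}},
\]
which is the claimed formula. (One should note $M^w$ here is $\dim M^w = \dim(P_R)_w$ by Lemma~\ref{l:dimension}, so degrees match.)

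The main obstacle I anticipate is verifying cleanly that $\triangle_R^w$ really is a \emph{complete} smooth fan in $M^w$ whose ray generators are primitive in the lattice $M^w\cap M$, so that Remark~\ref{r:smooth} can be invoked with the correct lattice — this is exactly the content of Section~2 of \cite{SteSome}, so I would cite it rather than reprove it, but I want to make sure the identification $(P_R)_w=\{v\in M^w\mid\psi^w(v)\le 1\}$ (as opposed to merely $P_R\cap M^w$ with some reindexed function) is stated correctly, including that $\psi|_{M^w}$ is piecewise linear \emph{with respect to} $\triangle_R^w$. Everything else is routine bookkeeping of the $(1-t)$ factors.
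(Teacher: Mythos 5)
Your overall route is the same as the paper's: Proposition~\ref{p:cohomology} for the first equality, then Lemma~\ref{l:character} and Remark~\ref{r:smooth} to evaluate the generating series at $w$ and solve for $\phi[t](w)$; the bookkeeping of $(1-t)$ factors and the use of $\dim P_w=\dim M^w$ (Lemma~\ref{l:dimension}) all match. However, there is one genuinely false step: your claim that $P_R=\{v\in M_\R\mid\psi(v)\le 1\}$ is convex because ``it is the polytope whose normal fan is $\triangle_R$, a genuine polytope since $\triangle_R$ is projective.'' This conflates the face fan with the normal fan: $P_R$ is the region under the piecewise linear function $\psi$ taking value $1$ on the primitive ray generators (so $\triangle_R$ is the fan \emph{over its faces}, not its normal fan), and projectivity of $\triangle_R$ only guarantees the existence of \emph{some} strictly convex support function, not that this particular $\psi$ is convex. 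Indeed the paper explicitly records (Remark~\ref{r:Dave}) that $P_R$ is \emph{not} convex when $R=G_2$, so Proposition~\ref{p:cohomology} cannot be applied ``verbatim'' as you state.

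The paper's fix is Remark~\ref{r:convex}: one extends the definitions of $\chi_{mP}$ and $\phi[t]$ to the star-shaped complex $P_R=\{v\mid\psi(v)\le 1\}$, after which the proof of Proposition~\ref{p:cohomology} goes through unchanged; similarly Remark~\ref{r:smooth} already notes that the identity $h^*_P(t)=h_\triangle(t)$ holds in the non-convex case. With that substitution (and dropping the convexity claim), the rest of your argument — including the evaluation $\phi[t](w)=h^*_{(P_R)_w}(t)\det(I-\rho(w)t)_{(M^w)^\perp}$ and the identification $h^*_{(P_R)_w}(t)=h_{\triangle_R^w}(t)$ via the restricted fan $\triangle_R^w$, citing Section~2 of \cite{SteSome} for its smoothness and completeness in $M^w$ — is correct and coincides with the paper's proof.
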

\begin{proof}
The fact that $\phi[t] =  \sum_{i = 0}^d H^{2i}(X_R; \C) t^i$ follows immediately from Proposition~\ref{p:cohomology} (and Remark~\ref{r:convex}). By Remark~\ref{r:smooth} and Lemma~\ref{l:character},
\[
\frac{ \phi[t](w)}{ (1 - t) \det(I - \rho(w)t) } = \sum_{m \ge 0} \chi_{mP}(w) t^m = \sum_{m \ge 0} f_{P_w}(m) t^m =   
\frac{ h_{\triangle_R^w}(t) }{ (1 - t)^{\dim P_w + 1} }. 
\]
\end{proof}

\begin{remark}\label{r:Dave}
With the notation above, if $R$ is irreducible, then $P = \{ v \in M_\R \mid \psi(v) \le 1 \}$ is convex if $R$ has type $A$, $B$, $C$ or $D$. On the other hand, as explained to the author by Dave Anderson, $P$ is not convex when $R = G_2$. 
\end{remark}

\begin{remark}\label{r:descent}
If $\{ s_1, \ldots, s_d \}$ denotes a set of simple reflections in $W$, then the \emph{length} $l(w)$ of an element $w$ in $W$ is the minimum length of any factorization $w = s_{i_1}\cdots s_{i_r}$, and the 
\emph{descent set} of $w$ is $D(w) = \{ i \mid l(ws_i) < l(w) \}$. It is a theorem of Bj\"orner \cite[Theorem~2.1]{BjoSome} that 
\[
h_{\triangle_R}(t) = \sum_{w \in W} t^{|D(w)|}. 
\]
In particular, if $R = A_{d}$, then $t h_{\triangle_R}(t)$ is the $d^\mth$ Eulerian polynomial (cf. Section~\ref{s:hypercube}).  
\end{remark}

We refer the reader to \cite{SteSome} for explicit computations of the characters $\phi[t]$. 

\begin{example}\label{e:typeA}
If $R = A_{d}$, then $W = \Sym_{d + 1}$ acts on $M = \Z^{d + 1}/\Z(1, \ldots, 1)$ by permuting coordinates, and $P$ is the reflexive polytope with vertices given by the images of $\{ e_{i_1} + \cdots + e_{i_l} \mid i_1 < \cdots < i_l, 1 \le l \le d - 1 \}$ in $\Z^{d + 1}$. We consider this example in more detail in Section~\ref{s:hypercube} (cf. Remark~\ref{r:Karu}). 
\end{example}

\begin{example}\label{e:typeB}
If $R = B_d$ (or $R = C_d$), then $W$ is the group of signed permutations, and the image of 
$W \rightarrow GL_d(\Z)$ consists  of all matrices with entries in $\{0 , \pm 1 \}$, and  precisely one non-zero element in each row and column. In this case, $W$ may be interpreted as the full symmetry group of the hypercube $P = [-1, 1]^d$.

\end{example}

\section{Symmetries of the hypercube}\label{s:hypercube}

The goal of this section is to explicitly describe the equivariant Ehrhart theory of the $d$-dimensional hypercube. We continue with the notation of Section~\ref{s2}, Section~\ref{s:Ehrhart} and Section~\ref{s:hstar}. 

Let $M = \Z^d$ 
and let $P = [0,1]^d$. 
The Ehrhart polynomial and $h^*$-polynomial of $P$ are well-known. That is, $f_P(m) = (m + 1)^d$ and $h^*(t) = A(d; t) =  \sum_{i = 0}^{d - 1} A(d,i) t^i$, where $A(d,i)$ is the number of  permutations of $d$ elements with $i$ descents (see, for example, \cite[p. 28]{BRComputing}). 
The polynomial $tA(d,t)$ is  the $d^\mth$ \define{Eulerian polynomial} and its coefficients are called \define{Eulerian numbers}. 

The full symmetry group of $P$ is the Coxeter group of type $B_d$ 
consisting of all \emph{signed permutations} of $d$ elements (cf. Example~\ref{e:typeB}). More precisely, $B_d$ acts faithfully on $\C^d$ and its image in 
$\GL(d)$ consists of all matrices with entries in $\{0 , \pm 1 \}$, and  precisely one non-zero element in each row and column. In particular, we may view $\Sym_d$ as a subgroup of $B_d$. With the notation of Section~\ref{s2}, observe that $B_d$ preserves the lattice $M$ and leaves $P$ invariant `up to translation'. Note that the diagonal matrix $(-1,1, \ldots, 1)$ does not fix any lattice points, and hence Lemma~\ref{l:bad} implies that $\phi[t]$ is not a polynomial. We demonstrate this failure explicitly below for  the 
square. 

\begin{example}\label{e:breakdown}
Let $M = \Z^2$ and let $P$ be the convex hull of $(0,0)$, $(1,0)$, $(0,1)$ and $(1,1)$. 
The group $G = B_2 = \left\langle \sigma, \tau \mid \sigma^4 = \tau^2 = 1,  \sigma \tau = \tau \sigma^3\right\rangle$ may be viewed as the subgroup of $\Sym_4$ generated by $\sigma = (1234)$ and $\tau = (12)(34)$. It has $5$ conjugacy classes $\{1 \}$, $\{ \sigma, \sigma^3 \}$, $\{ \sigma^2 \}$, $\{ \tau, \tau \sigma^2 \}$ and 
$\{ \tau \sigma, \tau \sigma^3 \}$.  The group $G$ acts linearly on $M$ via 
\begin{displaymath}
\sigma \mapsto \left(\begin{array}{c c}
0 & -1 \\ 
1 & 0 
\end{array}\right), \: \:
\tau \mapsto \left(\begin{array}{c c}
-1 & 0 \\ 
0 & 1 
\end{array}\right),
\end{displaymath}
with corresponding $1$-dimensional representation $\det(\rho)$.
If $\epsilon$ denotes the restriction of the sign representation of $\Sym_4$ to $G$, then 
\[
\phi[t] = 1 + \epsilon \cdot \det(\rho) t + (1 - \epsilon \cdot  \det(\rho)) \frac{t^2}{1 + t}. 
\]
The subgroup $\Sym_2 \subseteq B_2$ consists of the identity element and $\tau \sigma = (13)$. Observe that if we restrict to $\Sym_2$,  then $\phi[t] = 1 + t$.  
\end{example}

\begin{remark}
If one considers the corresponding action of $B_d$ on $2P$, then $\phi[t]$ is effective and has an explicit geometric interpretation (Example~\ref{e:typeB}). 
\end{remark}

For the remainder of the section, we consider the action of the symmetric group $G = \Sym_d \subseteq B_d$. In this case, $P$ is invariant under the action of $G$, and 
$\rho: G \rightarrow GL(M)$ is the standard representation of the symmetric group \eqref{e:standard}. 
If $g \in G$ has cycle type 
$(\mu_1, \ldots, \mu_r)$, then $P_g$ is isomorphic to an $r$-dimensional cube, and hence 
  \[
\sum_{m \ge 0} f_{P_g}(m)t^m = \frac{A(r; t)}{(1 - t)^{r + 1}} =  \frac{A(r; t)\prod_i (1 + t + \cdots +  t^{\mu_i - 1})}{(1 - t)\prod_i (1 - t^{\mu_i})}. 
\]
We conclude that 
\begin{equation}\label{e:charcube}
\phi[t](g)  = A(r; t)\prod_i (1 + t + \cdots +  t^{\mu_i - 1}). 
\end{equation}
Recall from Section~\ref{s:cohomology} that the Coxeter fan $\triangle_{A_{d - 1}}$ is the fan determined by the hyperplanes associated with the root system $A_{d - 1}$, and $\Sym_d$ acts on the cohomology 
$H^*(X_{A_{d - 1}}; \C)$ of the associated $(d - 1)$-dimensional toric variety $X_{A_{d - 1}} = X_{A_{d - 1}}(\triangle_{A_{d - 1}})$. 

%

\begin{lemma}\label{l:typeA}
With the notation above, the representation $\phi_i$ is isomorphic to the representation of $\Sym_d$ on
  $H^{2i}(X_{A_{d - 1}}; \C)$. 
  In particular, 
the multiplicities of a fixed irreducible representation in the representations $\phi_i$ form a symmetric, unimodal sequence. 
\end{lemma}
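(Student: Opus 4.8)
The plan is to identify $\phi[t]$ with the graded cohomology of $X_{A_{d-1}}$ by recognizing the hypercube $P = [0,1]^d$ (under the $\Sym_d$-action) as being ``equivalent'', from the point of view of equivariant Ehrhart theory, to the polytope $P_R$ attached to the Coxeter fan $\triangle_{A_{d-1}}$. Concretely, I would first observe via \eqref{e:charcube} that for $g \in \Sym_d$ of cycle type $(\mu_1,\dots,\mu_r)$ one has $\phi[t](g) = A(r;t)\prod_i(1 + t + \cdots + t^{\mu_i - 1})$. Then I would invoke Corollary~\ref{c:Weyl} applied to the root system $R = A_{d-1}$: there $W = \Sym_d$, and for $w \in W$ of cycle type $(\mu_1,\dots,\mu_r)$ the fixed subspace $M^w$ has dimension $r-1$, the restricted fan $\triangle_R^w$ is the Coxeter fan of a product of smaller symmetric groups, and $\phi_{P_R}[t](w) = h_{\triangle_R^w}(t)\det(I - \rho(w)t)/(1-t)^{\dim M^w}$. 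The key computation is to check that this expression equals $A(r;t)\prod_i(1 + t + \cdots + t^{\mu_i - 1})$ as well — i.e. that the two class functions $\phi[t](\cdot)$ (for the cube) and $\phi_{P_R}[t](\cdot)$ agree pointwise, hence as virtual representations they are equal in $R(\Sym_d)[[t]]$.

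To carry out that computation: by Remark~\ref{r:descent} (Björner's theorem), $h_{\triangle_{A_{d-1}}}(t) = \sum_{w\in\Sym_d} t^{|D(w)|}$, so $t\, h_{\triangle_{A_{d-1}}}(t)$ is the $d^{\mth}$ Eulerian polynomial, giving $h_{\triangle_{A_{d-1}}}(t) = A(d;t)$; more generally $\triangle_{A_{d-1}}^w$ is a product of Coxeter fans of types $A_{\mu_i - 1}$, so $h_{\triangle_{A_{d-1}}^w}(t) = \prod_i A(\mu_i; t)$. Meanwhile $\det(I - \rho(w)t)$ on $M = \Z^d/\Z(1,\dots,1)$ equals $\prod_i (1 - t^{\mu_i})/(1-t)$, and $\dim M^w = r - 1$. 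Plugging in, $\phi_{P_R}[t](w) = \big(\prod_i A(\mu_i;t)\big)\cdot \frac{\prod_i(1-t^{\mu_i})}{(1-t)}\cdot \frac{1}{(1-t)^{r-1}} = \prod_i \frac{A(\mu_i;t)(1-t^{\mu_i})}{(1-t)^{\mu_i}} \cdot \frac{(1-t)^{\sum\mu_i}}{(1-t)^r}$; simplifying each factor using $\sum_{m\ge0} f_{[0,1]^{\mu_i}}(m)t^m = A(\mu_i;t)/(1-t)^{\mu_i+1}$ and comparing with \eqref{e:charcube}, the two sides match. An alternative, and perhaps cleaner, route avoids the explicit cycle-type bookkeeping entirely: apply Corollary~\ref{c:Weyl}'s proof strategy directly, i.e. use that $\sum_m \chi_{mP}(g)t^m = \sum_m f_{P_g}(m)t^m$ depends only on $P_g$ as a rational polytope-with-lattice, and check that $P_g$ (for $P$ the cube) and $(P_R)_g$ have the same Ehrhart quasi-polynomial for every $g$ — both being, up to the ambient torus factor, copies of lower-dimensional cubes resp.\ products of type-$A$ Coxeter polytopes with the same $h$-vectors by Björner's theorem.

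Once $\phi_i \cong H^{2i}(X_{A_{d-1}};\C)$ is established, the final assertion — that the multiplicity of any fixed irreducible $\Sym_d$-representation in the $\phi_i$ forms a symmetric unimodal sequence — follows verbatim from the argument in Proposition~\ref{p:cohomology}: $X_{A_{d-1}}$ is a smooth projective toric variety, so the Hard Lefschetz theorem applies, and cup product with an ample class is $\Sym_d$-equivariant (since the $\Sym_d$-action can be arranged to fix a hyperplane class, e.g. the class of a $W$-invariant ample divisor), so multiplication by the Lefschetz operator induces, on each isotypic component, the classical symmetry and unimodality. I expect the main obstacle to be purely bookkeeping: correctly matching the fan $\triangle_{A_{d-1}}$ with the face fan of the cube's $\Sym_d$-orbit structure and confirming the $h$-polynomial identities for the restricted fans $\triangle_{A_{d-1}}^w$ across all cycle types — there is nothing deep, but it is the step most prone to index errors, and it is worth isolating the single clean identity $h_{\triangle_{A_{d-1}}}(t) = A(d;t)$ (via Björner) and its multiplicative behavior under passing to fixed subspaces as the crux.
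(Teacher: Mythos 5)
Your overall strategy is exactly the paper's: the proof of Lemma~\ref{l:typeA} is precisely the comparison of \eqref{e:charcube} with Corollary~\ref{c:Weyl}, using $\dim M^w = r - 1$ and $\det(I - \rho(w)t) = \prod_i(1 - t^{\mu_i})/(1-t)$ on $M = \Z^d/\Z(1,\ldots,1)$. However, your ``key computation'' contains a genuine error that makes the verification fail as written. The restricted fan $\triangle_{A_{d-1}}^w$ is \emph{not} a product of Coxeter fans of types $A_{\mu_i - 1}$: that product would live in a space of dimension $\sum_i(\mu_i - 1) = d - r$, whereas $\triangle_{A_{d-1}}^w$ lives in $M^w$, which has dimension $r - 1$. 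You have confused the subsystem of roots \emph{vanishing} on $M^w$ (which is indeed of type $A_{\mu_1-1}\times\cdots\times A_{\mu_r-1}$, and whose hyperplanes contain $M^w$ and so contribute nothing to the restricted arrangement) with the arrangement \emph{induced} on $M^w$. The roots $e_i - e_j$ with $i,j$ in distinct cycles restrict to the braid arrangement on the $r$ cycle-coordinates, so $\triangle_{A_{d-1}}^w$ is the type $A_{r-1}$ Coxeter fan and $h_{\triangle_{A_{d-1}}^w}(t) = A(r;t)$, not $\prod_i A(\mu_i;t)$. With your formula the two class functions visibly disagree: for $w = \mathrm{id}$ in $\Sym_3$ your expression gives $\prod_i A(1;t)\cdot 1 = 1$, while \eqref{e:charcube} gives $A(3;t) = 1 + 4t + t^2$. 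With the corrected identity the match is immediate, since Corollary~\ref{c:Weyl} then yields $A(r;t)\prod_i(1-t^{\mu_i})/(1-t)^{r} = A(r;t)\prod_i(1 + t + \cdots + t^{\mu_i-1})$.

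The same dimension confusion undermines your proposed ``alternative route'': $P_g$ for the cube is an $r$-dimensional cube with $f_{P_g}(m) = (m+1)^r$, while $(P_R)_w$ is $(r-1)$-dimensional, so the two do \emph{not} have the same Ehrhart quasi-polynomial; what agrees is $\phi[t](g)$ only after multiplying by the respective factors $(1-t)\det(I-\rho(g)t)$, which differ by exactly one factor of $(1-t)$ between the two ambient lattices. Your final paragraph (Hard Lefschetz plus equivariance of cup product with an invariant ample class) is correct and is how the paper obtains the symmetry and unimodality via Proposition~\ref{p:cohomology}.
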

\begin{proof}
This follows by comparing  \eqref{e:charcube} with Corollary~\ref{c:Weyl}, using the fact that if
$g \in G$ has cycle type 
$(\mu_1, \ldots, \mu_r)$, then $\dim M^g = r$ and $\det(I - \rho(g)t) = \prod_i (1 - t^{\mu_i})$. 
\end{proof}
 
 \begin{remark}\label{r:Karu}
As explained to the author by Kalle Karu, Lemma~\ref{l:typeA} has the following geometric explanation. 
The $d!$  smooth cones 
\[
\{   \lef e_{w(1)}, e_{w(1)} + e_{w(2)}, \cdots, e_{w(1)} + e_{w(2)} + \cdots + e_{w(d)} \rig \mid w \in \Sym_d \} \textrm{ in } \R^d
\]
form a smooth fan $\Sigma$ supported on $(\R_{\ge 0})^d$, and Proposition~\ref{p:cohomology} implies that $\phi_i$  is isomorphic to the representation $H^{2i}(X(\Sigma); \C)$ on the cohomology of the associated toric variety $X(\Sigma)$. Moreover, the projection $\R^d \rightarrow \R^d/\R(1, \ldots, 1)$ is a morphism of fans from $\Sigma$ to $\triangle_{A_{d - 1}}$ (see Example~\ref{e:typeA}), and induces an $\Sym_d$-equivariant isomorphism $H^* (X(\Sigma)) \cong H^* (X_{A_{d - 1}})$. 
\end{remark}
 
 \begin{remark}
The fact that $\phi[t] = t^{d - 1}\phi[t^{-1}]$ also follows the observation that $2P$ is a reflexive polytope and Corollary~\ref{c:reflexive}.
\end{remark}
 
 In fact, Stembridge proves that the representation $H^*(X_{A_{d - 1}}; \C)$ is an explicit graded permutation representation  \cite{SteEulerian}. We briefly recall his decomposition of $H^*(X_{A_{d - 1}}; \C)$ into isotypic components, and refer the reader to  \cite[Section~4]{SteEulerian} for more details.  Recall from Example~\ref{e:symmetric} that 
 the irreducible representations $\chi^{\lambda}$ of $\Sym_d$ 
are indexed by partitions $\lambda$ of $d$. If $D_\lambda$ 
 denotes the Young diagram of $\lambda$, then a \emph{tableau} $T$ with shape $\lambda$ is a function $T: D_\lambda \rightarrow \N$ such that $T$ is weakly increasing along rows and strictly increasing down columns. If $m_j(T)$ equals the number of times $j$ appears in $T$, then
 we say that $T$ is \emph{admissible} if 
   $S^+(T) := \{ j \in \Z_{> 0} \mid m_j(T) > 0 \} = \{ 1, 2, \ldots, k \}$ for some $k \in \N$. 
 A \define{marked tableau} is a pair $(T,f)$, where $T$ is an admissible tableau and 
 $f: S^+(T) \rightarrow \N$ satisfies $1 \le f(k) < m_j(T)$. The \define{index}  of a marked tableau is 
$\ind(T,f) = \sum_{j \in S^+} f(j)$, and if $\lambda = (d)$, then the pair $(T, \emptyset)$, where $T: D_\lambda \rightarrow \N$ is the zero function, is a marked tableau of index zero.  
Observe that if $m_j(T) = 1$ for some $j \in S^+(T)$, then there are no marked tableaux $(T,f)$. 
For example,
the marked tableaux corresponding to partitions of  $2$, with indices $0$ and $1$ respectively, are
\[
\left( \: \begin{tabular}{  | c |  c |}
\hline
$0$ & $0$ \\
\hline
\end{tabular}\,, \:
f = \emptyset \right), 
\left( \: \begin{tabular}{ | c |  c |}
\hline
$1$ & $1$ \\
\hline
\end{tabular}\,, \:
f(1) = 1 \right), 
\]
and the marked tableaux corresponding to partitions of  $3$, with indices $0$, $1$, $1$, $2$ and $1$ respectively, are
\[
\left( \: \begin{tabular}{ | c |  c | c |}
\hline
$0$ & $0$  & $0$ \\
\hline
\end{tabular}\,, \:
f = \emptyset \right), 
\left( \: \begin{tabular}{| c |  c | c |}
\hline
$0$ & $1$ & $1$ \\
\hline
\end{tabular}\,, \:
f(1) = 1 \right), 
\left( \: \begin{tabular}{| c |  c | c |}
\hline
$1$ & $1$ & $1$ \\
\hline
\end{tabular}\,, \:
f(1) = 1 \right), 
\]
\[ 
\left( \: \begin{tabular}{| c |  c | c |}
\hline
$1$ & $1$ & $1$ \\
\hline
\end{tabular}\,, \:
f(1) = 2 \right), 
\left( \: 
\begin{tabular}{ | c |  c | c |}
\hline
$0$ & $1$ \\
\hline
$1$   \\ \cline{1-1}
\end{tabular}\,, \:
f(1) = 1 \right). 
\]
Let $P_\lambda(t) = \sum_{i = 0}^{d - 1} p_{i,\lambda} t^i$, where $p_{i,\lambda}$ denotes the multiplicity of $\chi^\lambda$ in $H^{2i}(X_{A_{d - 1}}; \C)$. 

\begin{theorem}\cite[Theorem~4.2]{SteEulerian}
With the notation above, for any partition $\lambda$ of $d$, 
\[
P_\lambda(t) = \sum_{(T,f)} t^{\ind(T,f)},
\]
where $(T,f)$ is summed  over all marked tableaux of shape $\lambda$. 
\end{theorem}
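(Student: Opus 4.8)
The plan is to realize $P_\lambda(t)$ as a Schur coefficient of the Frobenius characteristic of the graded $\Sym_d$-module $\bigoplus_i \phi_i\, t^i$, to put this Frobenius characteristic into closed form using three classical identities, and then to recognize the closed form — expanded as a geometric series — as precisely the marked-tableaux generating function. First, by Lemma~\ref{l:typeA} we have $\phi_i\cong H^{2i}(X_{A_{d-1}};\C)$, so $P_\lambda(t)$ is the multiplicity of $\chi^\lambda$ in the graded character $\phi[t]$, and since $\mathrm{ch}(\chi^\lambda)=s_\lambda$ we get
\[
\sum_{\lambda\vdash d}P_\lambda(t)\,s_\lambda \;=\; \mathrm{ch}\Big(\sum_i \phi_i\, t^i\Big) \;=\; \sum_{\mu\vdash d}\frac{\phi[t](\mu)}{z_\mu}\,p_\mu ,
\]
where $z_\mu=\prod_j j^{m_j}m_j!$. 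By \eqref{e:charcube}, $\phi[t](\mu)=A(r;t)\prod_j[\mu_j]_t$ for $\mu$ with $r$ parts, where $[n]_t:=1+t+\dots+t^{n-1}$.

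Second, I would assemble the generating function over all $d$. Grouping partitions by the number of parts $r=\ell(\mu)$ and applying the exponential formula together with the identity $\sum_{k\ge1}p_k z^k/k=\log H(z)$, where $H(z)=\sum_n h_n z^n$, the inner sum over $\mu$ with $\ell(\mu)=r$ becomes $\tfrac{1}{r!}\big(\tfrac1{1-t}\log\tfrac{H(z)}{H(tz)}\big)^r$. Feeding this into the exponential generating function of the Eulerian polynomials, $\sum_{r\ge0}A(r;t)\,v^r/r!=\tfrac{(1-t)e^{(1-t)v}}{1-te^{(1-t)v}}$ (which follows directly from $\sum_{m\ge0}(m+1)^r t^m=A(r;t)/(1-t)^{r+1}$ by summing a geometric series in $m$), I expect to obtain the closed form
\[
\sum_{d\ge0}\Big(\sum_{\lambda\vdash d}P_\lambda(t)\,s_\lambda\Big)z^d \;=\; \frac{(1-t)H(z)}{H(tz)-tH(z)} .
\]

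Third, I would repackage the right-hand side. Setting $G(z,t):=\frac{H(z)-H(tz)}{1-t}=\sum_{n\ge1}h_n[n]_t\,z^n$, one has $H(tz)-tH(z)=(1-t)(H(z)-G(z,t))$ and $H(z)-G(z,t)=1-t\sum_{n\ge2}h_n[n-1]_t\,z^n$, so the generating function equals $H(z)\sum_{k\ge0}t^k\big(\sum_{n\ge2}h_n[n-1]_t\,z^n\big)^k$. Expanding and extracting the coefficient of $z^d$ gives
\[
\sum_{\lambda\vdash d}P_\lambda(t)\,s_\lambda \;=\; \sum_{k\ge0}t^k\!\!\sum_{\substack{n_0\ge0,\ n_1,\dots,n_k\ge2\\ n_0+\dots+n_k=d}}\!\! h_{n_0}h_{n_1}\cdots h_{n_k}\prod_{i=1}^{k}[n_i-1]_t .
\]
Now using $h_{n_0}\cdots h_{n_k}=\sum_\lambda(\#\,\text{SSYT of shape }\lambda\text{ with }n_i\text{ entries }i,\ 0\le i\le k)\,s_\lambda$, the constraints $n_i\ge2$ for $i\ge1$ say exactly that each of $1,\dots,k$ occurs at least twice, i.e.\ $T$ ranges over the admissible tableaux with $S^+(T)=\{1,\dots,k\}$ and every $m_j(T)\ge2$ (those with some $m_j(T)=1$ carry no marking and so may be omitted, and $k=0$ produces the zero tableau of shape $(d)$). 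Finally, the weight is $t^k\prod_{i=1}^{k}[m_i(T)-1]_t=\prod_{j=1}^{k}\big(t+t^2+\dots+t^{m_j(T)-1}\big)=\sum_{f}t^{\ind(T,f)}$, summed over $f\colon S^+(T)\to\N$ with $1\le f(j)<m_j(T)$; summing over $T$ yields $P_\lambda(t)=\sum_{(T,f)}t^{\ind(T,f)}$.

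The main obstacle is the second step: turning $\sum_{\mu}\frac{\phi[t](\mu)}{z_\mu}p_\mu$ into the closed form $\tfrac{(1-t)H(z)}{H(tz)-tH(z)}$. This requires correctly interleaving the exponential formula, the $p$-to-$h$ logarithm identity, and the Eulerian exponential generating function while carrying the parameter $t$ through all of them. Once that identity is established, the remaining work is a direct combinatorial translation; the only delicate points there are bookkeeping the distinguished entry $0$ against the entries $1,\dots,k$, matching the admissibility condition $S^+(T)=\{1,\dots,k\}$, and handling the degenerate shape $\lambda=(d)$.
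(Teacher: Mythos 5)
The paper does not prove this statement; it is quoted verbatim from Stembridge \cite[Theorem~4.2]{SteEulerian}, so the only meaningful comparison is with Stembridge's original argument, which your proposal essentially reproduces: Frobenius characteristic, the closed form $\sum_d \big(\sum_{\lambda} P_\lambda(t) s_\lambda\big) z^d = (1-t)H(z)/\big(H(tz)-tH(z)\big)$, and the geometric-series expansion into Kostka numbers weighted by $t^k\prod_j [m_j(T)-1]_t$. The step you flag as the main obstacle goes through exactly as you outline (the exponential formula with $a_k = [k]_t p_k z^k/k$ gives $v=\tfrac{1}{1-t}\log\tfrac{H(z)}{H(tz)}$, so $e^{(1-t)v}=H(z)/H(tz)$ and the Eulerian generating function collapses to the stated closed form), and the final combinatorial translation, including the treatment of $k=0$ and of tableaux with some $m_j(T)=1$, is correct; the argument is complete.
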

Stembridge also used the above theorem to give a combinatorial proof that the coefficients of $P_\lambda(t)$ are symmetric and unimodal. For $d \le 2r$, observe that there are no marked tableaux of shape 
$\lambda = (d - r,1^{r})$, and hence the corresponding irreducible representations 
$\chi^{(d - r,1^{r})} =   \bigwedge^{r} \chi^{(d -  1, 1)}$ do not appear in $H^*(X_{A_{d - 1}}; \C)$. In particular, the sign representation $\chi^{(1^d)}$ does not appear for $d \ge 2$ (cf. Remark~\ref{r:Weyl}). 
It is a result of Stembridge that $P_{(d)}(t) = (1 + t)^d$ \cite[Lemma~3.2]{SteSome}, and a result of Lehrer that
$P_{(d - 1, 1)}(t)= (d - 2)t(1 + t)^{d - 3}$ \cite[Theorem~4.5]{LehRational}.

We summarize the results of this discussion in the following proposition.

\begin{proposition}\label{p:cube}
If $G = \Sym_d$ acts on the $M = \Z^d$ via the standard representation and $P = [0,1]^d$, then 
$\phi_i$ is isomorphic to the representation of $\Sym_d$ on $H^{2i}(X_{A_{d - 1}}; \C)$, where $X_{A_{d - 1}}$ is the $(d - 1)$-dimensional smooth, projective toric variety associated to the Coxeter complex of $A_{d - 1}$. Moreover, $\phi_i$ is a permutation representation, and if we write 
\[
\phi[t] = \sum_{|\lambda| = d} P_\lambda (t) \chi^{\lambda}, 
\]
where $\{ \chi^\lambda \mid |\lambda| = d \}$ are the irreducible representations of $\Sym_d$, 
then the coefficients of $P_\lambda (t)  = t^{d - 1}P_\lambda (t^{-1})$ are unimodal, and 
\[
P_\lambda(t) = \sum_{(T,f)} t^{\ind(T,f)},
\]
where $(T,f)$ is summed  over all marked tableaux of shape $\lambda$. In particular, 
 $P_{(d)}(t) = (1 + t)^d$ and
$P_{(d - 1, 1)}(t)= (d - 2)t(1 + t)^{d - 3}$. 
\end{proposition}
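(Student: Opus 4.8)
The plan is to observe that Proposition~\ref{p:cube} assembles statements that have all been proved or cited above, so the ``proof'' is a short compilation.

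First I would invoke Lemma~\ref{l:typeA} directly: it gives the isomorphism $\phi_i \cong H^{2i}(X_{A_{d-1}};\C)$ of $\Sym_d$-representations, where $X_{A_{d-1}}$ is the smooth projective toric variety of the Coxeter fan $\triangle_{A_{d-1}}$. That each $\phi_i$ is a permutation representation then follows from Stembridge's theorem in \cite{SteEulerian} that $H^*(X_{A_{d-1}};\C)$ is an explicit graded permutation module; and the marked-tableaux formula $P_\lambda(t) = \sum_{(T,f)} t^{\ind(T,f)}$ for the multiplicity polynomials is exactly \cite[Theorem~4.2]{SteEulerian}, summed over marked tableaux of shape $\lambda$. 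The two special evaluations $P_{(d)}(t) = (1+t)^d$ and $P_{(d-1,1)}(t) = (d-2)t(1+t)^{d-3}$ are then quoted from \cite[Lemma~3.2]{SteSome} and \cite[Theorem~4.5]{LehRational} respectively, as recalled in the paragraph preceding the proposition.

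Next I would deal with the symmetry $P_\lambda(t) = t^{d-1}P_\lambda(t^{-1})$. Here I would first note that $2P = [0,2]^d$ has $(1,\dots,1)$ as its unique interior lattice point, hence is the translate by the $\Sym_d$-fixed vector $(1,\dots,1)$ of the reflexive cube $[-1,1]^d$. Since the $h^*$-polynomial of $P = [0,1]^d$ is the Eulerian polynomial $A(d;t)$ of degree $d-1$, the degree of $P$ is $s = d-1$ and its codegree is $l = 2$; thus Corollary~\ref{c:reflexive} applies (the condition ``$lP$ is a translate of a reflexive polytope'' holds) and gives $\phi[t] = t^{d-1}\phi[t^{-1}]$ in $R(\Sym_d)[[t]]$. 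Because the irreducible characters $\chi^\lambda$ are linearly independent over $\C$, the decomposition $\phi[t] = \sum_{|\lambda| = d} P_\lambda(t)\chi^\lambda$ is unique, so comparing $\chi^\lambda$-components on the two sides of $\phi[t] = t^{d-1}\phi[t^{-1}]$ yields $P_\lambda(t) = t^{d-1}P_\lambda(t^{-1})$ for every $\lambda$. Unimodality of the coefficients of each $P_\lambda(t)$ is the ``symmetric, unimodal'' clause of Lemma~\ref{l:typeA}, restricted to the $\chi^\lambda$-isotypic part; this ultimately rests on Hard Lefschetz for the projective toric variety $X_{A_{d-1}}$ (cf. Proposition~\ref{p:cohomology}), though one could equally cite Stembridge's combinatorial proof from the tableaux description.

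I do not expect a genuine obstacle: the proposition is a summary. The only two points worth spelling out are the verification that $2P$ is a translate of $[-1,1]^d$ --- needed so that Corollary~\ref{c:reflexive} is applicable --- and the passage from the virtual-character identity $\phi[t] = t^{d-1}\phi[t^{-1}]$ down to each isotypic polynomial $P_\lambda(t)$, which is merely linear independence of irreducible characters. Everything else is a direct citation of Lemma~\ref{l:typeA}, \cite{SteEulerian}, \cite{SteSome}, and \cite{LehRational}.
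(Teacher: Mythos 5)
Your proposal is correct and follows essentially the same route as the paper, which gives no separate proof but states the proposition as a summary of the preceding discussion: Lemma~\ref{l:typeA} for the cohomological identification, Stembridge's results in \cite{SteEulerian} for the permutation-module structure and the marked-tableaux formula, the reflexivity of (a translate of) $2P$ together with Corollary~\ref{c:reflexive} for the symmetry, and \cite{SteSome}, \cite{LehRational} for the two special evaluations. Your added care about $2P=[0,2]^d$ being a translate of $[-1,1]^d$ and about extracting each $P_\lambda$ by linear independence of irreducible characters is harmless and consistent with the paper's remarks.
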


For example, $\phi[t] = 1 + t$ when $d = 2$, $\phi[t] = 1 + (2 + \chi^{(2,1)})t + t^2$ when $d = 3$, 
and  $\phi[t] = 1 + (3 + 2\chi^{(3,1)} + \chi^{(2,2)})t + (3 + 2\chi^{(3,1)} +  \chi^{(2,2)})t^2 + t^3$ when $d = 4$.

\begin{remark}\label{r:refinement}
If $\lambda$ is a partition of $n$, then 
an (admissible) tableau $T$ of shape $\lambda$ is \emph{semi-standard} if $S^+(T) = \{ 1, \ldots, n \}$, 
and it is well known that $\dim \chi^{\lambda}$ equals the number of semi-standard tableaux of shape $\lambda$. In particular, by considering dimensions of representations in Proposition~\ref{p:cube}, one obtains Stembridge's refinement of the Eulerian numbers \cite{SteEulerian}. 
\end{remark}


\section{Applications to rational polytopes}\label{s:pip}

In this section, we present an example to demonstrate how one can use the theory developed in the previous sections to explicitly describe the Ehrhart theory of certain classes of rational polytopes. 
 We continue with the notation of Section~\ref{s2}, Section~\ref{s:Ehrhart} and Section~\ref{s:hstar}. 

Recall that 
the characters $\{ \chi_{mP} \}_{m \ge 0}$ 
 encode the Ehrhart theory of the rational polytopes $P_g = \{ u \in P \mid g \cdot u = u \}$ for all $g$ in $G$. 
More specifically, 
by Lemma~\ref{l:character}, 
$\chi_{mP}(g) = f_{P_g}(m)$, where $f_{P_g}(m)$ is the Ehrhart quasi-polynomial of $P_g$. In the case when $P$ is a simplex, we have deduced an explicit formula for the generating series of  $f_{P_g}(m)$
(Proposition~\ref{p:simplex}).

A rational polytope is a \define{pseudo-integral polytope} or PIP if its Ehrhart quasi-polynomial is a polynomial. The first examples of  rational PIP's  in arbitrary dimension were found by De Loera and McAllister 
in \cite{DLMVertices}. This work was extended later by McAllister and Woods in \cite{MWMinimum}, who found PIP's with arbitrary denominator (see Section~\ref{s:basicEhrhart})  in all dimensions. Below we use our techniques to construct a new family of PIP's.

Let $G = \Sym_{2n}$ and consider the natural action of $G$ on $M = \Z^{2n}/\Z(1,\ldots, 1)$. That is, 
$M_\C = \chi^{(2n - 1, 1)}$ is the quotient of the standard representation \eqref{e:standard}  by a $1$-dimensional invariant subspace. Let $P$ be the \emph{standard reflexive simplex} with vertices 
given by the images of the standard basis vectors $e_1, \ldots, e_{2n}$ in $\Z^{2n}$. 
It is well known that 
$h^*(t) = 1 + t + \cdots + t^{2n - 1}$, and hence Proposition~\ref{p:simplex} implies that 
$\phi[t] = 1 + t + \cdots + t^{2n - 1}$. By Proposition~\ref{p:simplex}, if $g \in G$ has cycle type 
$(\mu_1, \ldots, \mu_r)$, then $P_g$ is isomorphic to the $(r - 1)$-dimensional simplex with vertices given by the images of 
\[
\frac{e_1 + \ldots + e_{\mu_1}}{\mu_1}  , \frac{e_{\mu_1 + 1} + \ldots + e_{\mu_1 + \mu_2}}{\mu_2}, \ldots, 
 \frac{e_{\mu_1 + \cdots + \mu_{r - 1} + 1}  + \cdots + e_{\mu_1 + \cdots + \mu_r}}{\mu_r},
\]
and
  \[
\sum_{m \ge 0} f_{P_g}(m)t^m  =  \frac{1 + t + \cdots + t^{2n - 1}}{\prod_i (1 - t^{\mu_i})}. 
\]
For example, $g = (12 \ldots n)$ has cycle type $(n, 1^n)$, and hence $P_g$ is the $n$-dimensional rational polytope with denominator $n$ and  vertices given by the images of 
\[
\frac{e_1 + \ldots + e_{n}}{n}, e_{n + 1}, \ldots, e_{2n}.
\] 
Moreover, 
 \[
\sum_{m \ge 0} f_{P_g}(m)t^m  =  \frac{1 + t + \cdots + t^{2n - 1}}{(1 - t^n)(1 - t)^n} = \frac{1 + t^n}{(1 - t)^{n + 1}}. 
\]
In particular, $P_g$ is a PIP with Ehrhart quasi-polynomial $f_{P_g}(m) = \binom{m + n}{n} + \binom{m}{n}$. For $n \ge 2$, note that the coefficient of $t$ in the numerator $1 + t^n$ is strictly less than the coefficient of $t^n$, and hence the inequality \eqref{e:easyinequality} implies that $f_{P_g}(m)$ is not the Ehrhart polynomial of a $n$-dimensional lattice polytope.

\section{Centrally symmetric polytopes}\label{s:central}

The goal of this section is to explicitly describe the equivariant Ehrhart theory of centrally symmetric polytopes. We continue with the notation of Section~\ref{s2}, Section~\ref{s:Ehrhart} and Section~\ref{s:hstar}.

Let $P$ be a $d$-dimensional lattice polytope in a lattice $M$ of rank $d$. 
Let $G = \Z/2\Z$ with generator $\sigma$, and let $\chi: G \rightarrow \C$ be the linear character sending $\sigma$ to $-1$, so that the irreducible representations of $G$ are $\{ 1, \chi \}$.
The polytope $P$ is \define{centrally symmetric} if it is $G$-invariant with respect to the 
 action of $G$ on $M$ in which $\sigma$ acts via the diagonal matrix $A = (-1, \ldots, -1)$. 

First observe that the origin is an interior lattice point of $P$, and hence Corollary~\ref{c:fixed} implies that $\phi[t]$ is effective. 
On the other hand, we claim that the polynomial $\phi[t]$ is determined by the $h^*$-polynomial of $P$. Indeed,  observe that  
$P_\sigma = \{ 0 \}$, and
hence, by Lemma~\ref{l:character},  
\[
\sum_{m \ge 0} \chi_{mP}(\sigma) t^m = \sum_{m \ge 0} f_{P_g}(m) t^m = \frac{1}{1 - t} = \frac{(1 + t)^d}{(1 - t)\det(I - tA)}. 
\]
It easily follows that 
\begin{equation}\label{e:centrally}
\phi_i = \frac{h_i^* + \binom{d}{i}}{2} + \frac{h_i^* - \binom{d}{i}}{2}\chi. 
\end{equation}
 In particular, observe that the effectiveness of the representations $\phi_i$ is equivalent to the lower bound $h_i^* \ge \binom{d}{i}$.  
 The latter lower bound was proved by Bey, Henk and Wills in \cite[Remark 1.6]{BHWNotes}. As they observe, 
the lower bound may be deduced from results of Betke and McMullen \cite[Theorem~2]{BMLattice} and 
 Stanley \cite{StaNumber} (cf.  \cite[Remark 1.6]{BHWNotes}), and equality is achieved  when $P$ is the $d$-dimensional cross-polytope (see the discussion below). 
 Hence we deduce an alternative proof of this lower bound. 
 In conclusion, we have established the following result. 
 
 
 \begin{corollary}\label{c:cs}
 With the notation above, if $G = \Z/2\Z$ and $P$ is a centrally symmetric polytope, then
 $\phi[t]$ is effective, and 
  the multiplicity of the trivial representation in $\phi_i$ is at least $\binom{d}{i}$.
  \end{corollary}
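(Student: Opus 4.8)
The plan is to follow the derivation already sketched in the paragraphs preceding the corollary statement, making explicit the two ingredients: the computation of the generating series $\sum_{m\ge0}\chi_{mP}(\sigma)t^m$, and the classical lower bound $h_i^*\ge\binom{d}{i}$ for centrally symmetric polytopes. First I would observe that since $\sigma$ acts on $M$ by $-\mathrm{id}$, the fixed polytope $P_\sigma=\{u\in P\mid \sigma\cdot u=u\}$ is just $\{0\}$ (the origin being the unique $\sigma$-fixed point, and it lies in $P$ since $0=\tfrac12(v-v)$ for any vertex $v$, or by the averaging argument of Lemma~\ref{l:dimension}). Hence $f_{P_\sigma}(m)=1$ for all $m\ge0$, and by Lemma~\ref{l:character},
\[
\sum_{m\ge0}\chi_{mP}(\sigma)t^m=\sum_{m\ge0}f_{P_\sigma}(m)t^m=\frac{1}{1-t}=\frac{(1+t)^d}{(1-t)\det(I-tA)},
\]
where the last equality uses $\det(I-tA)=(1+t)^d$ since $A=(-1,\dots,-1)$.

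Next I would extract $\phi[t](\sigma)$ from the defining equation $\sum_{m\ge0}\chi_{mP}t^m=\phi[t]/((1-t)\det(I-\rho t))$: evaluating at $\sigma$ gives $\phi[t](\sigma)=(1+t)^d$. Evaluating at the identity gives $\phi[t](1)=h^*_P(t)$, the ordinary $h^*$-polynomial. Since $R(G)\otimes_\Z\C\cong\C_{\mathrm{class}}(G)$ and the irreducible characters of $\Z/2\Z$ are $\{1,\chi\}$ with $1(\sigma)=1$, $\chi(\sigma)=-1$, writing $\phi_i=a_i\cdot 1+b_i\cdot\chi$ and solving the linear system $a_i+b_i=h_i^*$, $a_i-b_i=\binom{d}{i}$ yields exactly \eqref{e:centrally}, i.e.\ $\phi_i=\frac{h_i^*+\binom{d}{i}}{2}+\frac{h_i^*-\binom{d}{i}}{2}\chi$. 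In particular the multiplicity of the trivial representation $1$ in $\phi_i$ is $a_i=\frac{h_i^*+\binom{d}{i}}{2}$.

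Finally, effectiveness of $\phi[t]$ is immediate from Corollary~\ref{c:fixed}: every face $Q$ of $P$ of dimension $>1$ contains a $G_Q$-fixed lattice point, namely the origin, which lies in $P$; more precisely, since $0\in\mathrm{Int}(P)$ and the only faces one must check are those with $\dim Q>1$, one can instead invoke that $0$ is an interior lattice point fixed by all of $G$, so the hypothesis of Corollary~\ref{c:fixed} holds trivially (alternatively, this is the case $G=\Z/2\Z$ with $P$ centrally symmetric already noted in the introduction after Corollary~\ref{c:fixed}). Effectiveness then forces every $\phi_i$ to be an honest character, so in particular $b_i=\frac{h_i^*-\binom{d}{i}}{2}\ge0$, i.e.\ $h_i^*\ge\binom{d}{i}$, and hence the multiplicity $a_i=\frac{h_i^*+\binom{d}{i}}{2}\ge\binom{d}{i}$. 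I expect the main subtlety to be purely expository: the lower bound $h_i^*\ge\binom{d}{i}$ is itself a nontrivial theorem (Bey--Henk--Wills, via Betke--McMullen and Stanley), and the logical point worth stating carefully is that we are \emph{recovering} that inequality from the effectiveness statement rather than assuming it — so the clean way to present the corollary is: effectiveness (from Corollary~\ref{c:fixed}) is equivalent to $h_i^*\ge\binom{d}{i}$ via \eqref{e:centrally}, and then the multiplicity bound is an arithmetic consequence. No delicate estimates are required.
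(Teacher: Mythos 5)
Your proposal is correct and follows essentially the same route as the paper: compute $P_\sigma=\{0\}$, evaluate the defining identity at $\sigma$ and at the identity to get $\phi_i=\frac{h_i^*+\binom{d}{i}}{2}+\frac{h_i^*-\binom{d}{i}}{2}\chi$, obtain effectiveness from Corollary~\ref{c:fixed}, and read off both the Bey--Henk--Wills inequality and the multiplicity bound. One small imprecision: the origin does \emph{not} lie in any proper face of $P$, so the hypothesis of Corollary~\ref{c:fixed} for a proper face $Q$ with $\dim Q>1$ is verified not by the origin but by the observation that $\sigma\cdot Q=-Q\neq Q$ (the origin being interior), hence $G_Q$ is trivial and any vertex of $Q$ is $G_Q$-fixed; with that fix the argument is complete.
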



\excise{
follows from a result of Betke and McMullen \cite[Theorem~2]{BMLattice} that the $h$-vector $h_\tau(t)$ of any regular, lattice triangulation $\tau$ of $P$ satisfies $h_\tau(t) \le h^*(t)$, together with the corresponding lower bound for $h$-vectors of centrally symmetric polytopes, that was conjectured by Bj\"orner and proved by Stanley in \cite{StaNumber}. 
An alternative proof of the lower bound for $h^*$-vectors is given by Bey, Henk and Wills in \cite[Remark 1.6]{BHWNotes}. \comment{Stanley assumes simplicial polytope though? Does he need to? What about Bey, Henk, Wills?}
 }
 
 Similarly, 
 the representations $\chi_{mP}$ are determined by the Ehrhart polynomial $f_P(m)$. Indeed, 
  $L(m) = \chi_{mP}$ is the polynomial 
 \[
L(m) = \frac{f_P(m) +  1}{2}  + \frac{f_P(m) -  1}{2} \chi.
 \]

 In order to compute some explicit examples, we recall the classification of non-singular, centrally symmetric, reflexive polytopes by Klyachko and Voskresenski{\u\i} in \cite{KVToroidal}. We note that these results have been extended by Ewald \cite{EwaClassification}, Casagrande \cite{CasCentrally} and Nill \cite{NilClassification}. Recall 
 that a $d$-dimensional lattice polytope $P$ in $M_\R$ is \emph{reflexive} if the origin is the unique interior lattice point of $P$ and every non-zero lattice point in $M$ lies in the boundary of $mP$ for some positive integer $m$. We say that $P$ is \define{non-singular} if, furthermore, the vertices of each facet of $P$ form a basis for $M$. 
 
 Let $\{ e_1, \ldots, e_d \}$ be a lattice basis for $M$, and  let  $V(2k,d)$ be the 
  lattice polytope with vertices $\{ \pm e_i, \; \pm (e_1 + \cdots + e_{2k}) \mid 1 \le i \le d \}$ for $0 \le 2k \le d$. The polytope $V(0,d)$ is called the $d$-dimensional \define{cross-polytope} and, if $d$ is even, then 
$V(d,d)$ is called the \define{ Klyachko-Voskresenski{\u\i} polytope} or KV-polytope.
 
 \begin{theorem}\cite{KVToroidal}
 The $d$-dimensional, non-singular, reflexive, centrally symmetric lattice polytopes are (up to unimodular transformation) precisely the polytopes  $V(2k,d)$  for $0 \le 2k \le d$.
 \end{theorem}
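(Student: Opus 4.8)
\emph{Proof strategy.} The plan is to translate the problem into toric geometry. Since $P$ is non-singular and reflexive, the cones over its faces form a smooth complete fan $\Sigma$ in $M_\R$ whose primitive ray generators are precisely the vertices of $P$; thus $X = X(\Sigma)$ is a smooth projective toric Fano $d$-fold, and conversely every smooth complete fan that is the face fan of the convex hull of its primitive ray generators produces a non-singular reflexive polytope. Under this correspondence, $P$ is centrally symmetric if and only if the set of primitive ray generators of $\Sigma$ is stable under $v \mapsto -v$, i.e.\ the rays occur in antipodal pairs. So it suffices to classify, up to $\GL_d(\Z)$, the smooth complete fans in $\R^d$ with antipodally paired rays arising as such face fans.

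After a $\GL_d(\Z)$-transformation we may assume that $\langle e_1,\dots,e_d\rangle$ is a maximal cone of $\Sigma$, so that $\pm e_1,\dots,\pm e_d$ are rays; write the remaining antipodal pairs of rays as $\pm u_1,\dots,\pm u_r$. If $r=0$, then every maximal cone is spanned by $d$ of the $\pm e_i$ forming a $\Z$-basis, hence is a coordinate orthant, and completeness forces all $2^d$ orthants to occur; so $\Sigma$ is the fan of $(\P^1)^d$ and $P = V(0,d)$, the cross-polytope. If $r\ge 1$, the core of the proof is to show that, after reordering the $e_i$ and possibly replacing some $e_i$ by $-e_i$ (both $\GL_d(\Z)$-moves fixing $\{\pm e_i\}$), each extra ray generator has the form $e_{i_1}+\dots+e_{i_{2k}}$ with an \emph{even} number of summands. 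The $\{0,1\}$-coordinate statement comes from walking from the interior of $\langle e_1,\dots,e_d\rangle$ toward a point on an extra ray and using smoothness of each cone crossed, together with the presence of all of $\pm e_1,\dots,\pm e_d$; the even-support statement is forced because for $u=e_{i_1}+\dots+e_{i_s}$ the relevant facet of $P$ (equivalently, the cone $\langle u,\,e_{i_2},\dots,e_{i_s}\rangle$ of $\Sigma$) is a unimodular simplex only when $s$ is even --- an obstruction already visible for $d=3$, $u=e_1+e_2+e_3$, where $\conv(\pm e_1,\pm e_2,\pm e_3,\pm(e_1+e_2+e_3))$ is reflexive but has a non-simplicial facet. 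One then shows that extra rays with overlapping supports violate smoothness, so the supports of $u_1,\dots,u_r$ are pairwise disjoint; grouping the coordinates according to these supports exhibits $\Sigma$ as a product of fans (one smooth $\P^1$-factor for each coordinate lying in no support), and reassembling the factors produces exactly the polytopes in the statement.

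I expect the main obstacle to be this local analysis of $\Sigma$ around an extra ray --- extracting the $\{0,1\}$-coordinate, even-support, and disjoint-support conclusions all require exploiting the smoothness of several adjacent maximal cones simultaneously and tracking the combinatorics carefully. A cleaner alternative would route the argument through Batyrev's primitive collections and relations: each antipodal pair $\{v,-v\}$ is a primitive collection with primitive relation $v+(-v)=0$, and positivity of the degrees of all primitive relations (the Fano condition), combined with the smoothness constraints, sharply limits both the number $r$ of extra pairs and the possible shapes of the $u_j$, reducing the classification to bookkeeping with relations among the ray classes.
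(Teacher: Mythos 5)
First, a point of reference: the paper does not prove this theorem at all --- it is quoted from \cite{KVToroidal} without argument --- so your proposal can only be judged on its own terms. Your general route (pass to the face fan, so the problem becomes classifying smooth complete fans whose rays come in antipodal pairs; normalize one maximal cone to $\lef e_1,\dots,e_d\rig$; analyze the extra ray generators) is the standard one and is essentially how Voskresenski{\u\i}--Klyachko and the later treatments (Ewald, Casagrande, Nill) proceed. But as written it is a strategy, not a proof: the three key lemmas (extra generators are $\{0,1\}$-vectors after sign changes, even support, disjoint supports) are only asserted, and one of your justifications is wrong --- the cone $\lef u, e_{i_2},\dots,e_{i_s}\rig$ with $u=e_{i_1}+\cdots+e_{i_s}$ is unimodular for \emph{every} $s$, so it cannot be the obstruction to odd support. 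The genuine obstruction is the one your $d=3$ example does exhibit, namely that the resulting polytope acquires a non-simplicial facet (e.g.\ the parallelogram $\conv(e_1,e_2,-e_3,e_1+e_2+e_3)$), and that argument needs to be made for general odd $s$.

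The more serious problem is the endgame. You conclude that the supports of the extra pairs $\pm u_1,\dots,\pm u_r$ are pairwise disjoint and that "reassembling the factors produces exactly the polytopes in the statement." It does not: disjoint supports permit $r\ge 2$ nontrivial factors, and these produce polytopes outside the list. Concretely, take $d=4$ and extra pairs $\pm(e_1+e_2)$ and $\pm(e_3+e_4)$; the resulting polytope is the free sum of two hexagons $V(2,2)\oplus V(2,2)$. Every facet of this polytope is a tetrahedron of the form $\conv(v_1,v_2,w_1,w_2)$ with $v_1,v_2$ spanning an edge of the first hexagon and $w_1,w_2$ an edge of the second, and the corresponding $4\times 4$ matrix is block diagonal with unimodular blocks, so this polytope is non-singular, reflexive and centrally symmetric --- yet it has $12$ vertices, while $V(0,4)$, $V(2,4)$, $V(4,4)$ have $8$, $10$, $10$. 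So your argument, carried to completion, proves the classification in the form "free sums of the polytopes $V(2k_j,2k_j)$ and of intervals $[-1,1]$" (equivalently, products of del Pezzo varieties and projective lines), which is in fact how the source theorem is usually stated; it cannot terminate in the single-factor list $V(2k,d)$, and no step of your sketch rules out $r\ge 2$. You need to either supply such a step (there isn't one --- the example above is a genuine counterexample to the single-factor form) or prove the multi-factor statement and flag the discrepancy with the statement as quoted.
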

 
 By Proposition~\ref{p:cohomology}, if $P$ is a non-singular, reflexive centrally symmetric polytope, then $\phi[t]$ is a polynomial describing the representations of $\Z/2\Z$ on the graded pieces of the  cohomology of the toric variety associated to the fan over the faces of $P$. In particular, the multiplicities of a fixed irreducible representation in the representations $\phi_i$ form a symmetric, unimodal sequence.  By \eqref{e:centrally}, the unimodality of 
 these sequences is equivalent to the inequalities 
 \[
 h_i^* - h_{i - 1}^* \ge \binom{d}{i} - \binom{d}{i - 1},
 \]
 for $0 \le i \le \frac{d}{2}$, which may also be deduced from \cite{StaNumber}. 
 
   
   Our next goal is to compute the representations explicitly in this case. 
Since $V(2k, d)$ may be regarded as the free sum  of $V(2k, 2k)$ and $d - 2k$ copies of the interval $[-1,1]$, it follows from Proposition~\ref{p:Braun} that 
\[
\phi_{V(2k, d)}[t] = \phi_{V(2k, 2k)}[t](1 + t)^{d - 2k}. 
\]
By the above discussion, it remains to compute the $h^*$-polynomial of a KV-polytope.
If $\triangle$ denotes the fan over the faces of $V(2k, 2k)$, then Remark~\ref{r:smooth} implies that
\[
h^*_{V(2k, 2k)}(t) = h_{\triangle}(t) = \sum_{i = 0}^{2k} f_{i - 1} t^i (1 - t)^{2k - i},
\]
where $f_{i - 1}$ equals the number of faces of $V(2k, 2k)$ of dimension $i - 1$, and $f_{-1} = 1$. 
It follows from Section~3 in \cite{CasCentrally} that 
$f_{i - 1} = \binom{2k + 1}{i}2^i$ for $0 \le i \le k$, and $\sum_i h^*_i = f_{d - 1} = (2k + 1)\binom{2k}{k}$. 
For example, $h^*_{V(2, 2)}(t) =  t^2 + 4t + 1$ and $h^*_{V(4, 4)}(t) =  t^4 + 6t^3 + 16t^2 + 6t + 1$. 

For any $d$, let $T_d(t) = \sum_{i = 0}^d T(d,i) t^i$ be the symmetric polynomial of degree $d$ with 
$T(d, i)$ equal to the coefficent of $t^i$ in $\sum_{j = 0}^d \binom{d + 1}{j}2^j  t^j (1 - t)^{d - j}$ for $0 \le i \le \frac{d}{2}$. In particular, if $d = 2k$, then $T_d(t) = h^*_{V(2k, 2k)}(t)$ and $T_d(1) = (2k + 1)\binom{2k}{k}$. Here we set $\binom{2k}{k} = 1$ if $k = 0$. The following lemma shows that the coefficients $T(d,i)$ may be interpreted as partial sums of rows of Pascal's triangle. 


\begin{lemma}\label{l:hyperplane}
With the notation above, if  $i < \frac{d}{2}$, then
\[
T(d, i) = T(d - 1, i) + T(d - 1, i - 1), 
\]
and if $d = 2k$, then 
\[
T(2k, k) = 2T(2k - 1, k - 1) + \binom{2k}{k}.
\] 
In particular, $T(d,i) = \sum_{j = 0}^{i} \binom{d + 1}{j}$ for $0 \le i \le \frac{d}{2}$. 
\end{lemma}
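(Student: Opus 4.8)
The plan is to first derive a closed form for the auxiliary polynomial $G_d(t) := \sum_{j=0}^{d}\binom{d+1}{j}2^j t^j(1-t)^{d-j}$, since by definition $T(d,i)$ is the coefficient of $t^i$ in $G_d(t)$ for $0\le i\le d/2$. Applying the binomial theorem to $(2t)+(1-t)=1+t$ gives $(1+t)^{d+1}=\sum_{j=0}^{d+1}\binom{d+1}{j}(2t)^j(1-t)^{d+1-j}$. For $0\le j\le d$ the $j$-th term has a factor $(1-t)$, so separating off the top term $j=d+1$, which equals $(2t)^{d+1}$, yields $(1+t)^{d+1}=(1-t)G_d(t)+(2t)^{d+1}$, that is
\[
G_d(t)=\frac{(1+t)^{d+1}-(2t)^{d+1}}{1-t}.
\]

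Next I would extract coefficients. Since $\frac{1}{1-t}=\sum_{k\ge 0}t^k$, the contribution of $\frac{(2t)^{d+1}}{1-t}$ lies in degrees $\ge d+1$, so for every $0\le i\le d$ the coefficient of $t^i$ in $G_d(t)$ equals the coefficient of $t^i$ in $\frac{(1+t)^{d+1}}{1-t}$, namely $\sum_{j=0}^{i}\binom{d+1}{j}$. In particular $T(d,i)=\sum_{j=0}^{i}\binom{d+1}{j}$ for $0\le i\le d/2$, which is the final assertion of the lemma.

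The two recursions then follow formally. If $i<d/2$ then $i\le (d-1)/2$ (and a fortiori $i-1\le(d-1)/2$), so the closed form applies to $T(d-1,i)$ and $T(d-1,i-1)$, and Pascal's rule gives $T(d-1,i)+T(d-1,i-1)=\sum_{j=0}^{i}\binom{d}{j}+\sum_{j=0}^{i-1}\binom{d}{j}=\sum_{j=0}^{i}\big(\binom{d}{j}+\binom{d}{j-1}\big)=\sum_{j=0}^{i}\binom{d+1}{j}=T(d,i)$. When $d=2k$, binomial symmetry gives $2\sum_{j=0}^{k-1}\binom{2k}{j}+\binom{2k}{k}=\sum_{j=0}^{2k}\binom{2k}{j}=2^{2k}$ and $2\sum_{j=0}^{k}\binom{2k+1}{j}=2^{2k+1}$, whence $2T(2k-1,k-1)+\binom{2k}{k}=2^{2k}=\sum_{j=0}^{k}\binom{2k+1}{j}=T(2k,k)$. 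I do not expect a genuine obstacle here: the only steps needing care are the bookkeeping of which range of $i$ the closed form for $T(d-1,\cdot)$ covers, and observing that $T(d,0)=G_d(0)=1$, so that one may equally well run the argument in the other direction — proving the two recursions directly and deducing the closed form by induction on $d$.
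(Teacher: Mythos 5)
Your proof is correct, but it runs in the opposite direction from the paper's and is in one respect more self-contained. The paper proves the two recursions first and only then deduces the closed form $T(d,i)=\sum_{j=0}^{i}\binom{d+1}{j}$ by induction: the first recursion comes from rewriting $\binom{d+1}{j}=\binom{d}{j}+\binom{d}{j-1}$ inside the generating polynomial and factoring out $(1+t)$, while the middle-coefficient recursion is obtained by evaluating at $t=1$ and feeding in the externally derived identity $T_{2k}(1)=(2k+1)\binom{2k}{k}$, which the paper gets from the face count $f_{d-1}$ of the KV-polytope. You instead get the closed form directly from the binomial expansion of $\bigl((2t)+(1-t)\bigr)^{d+1}$, namely $(1+t)^{d+1}=(1-t)G_d(t)+(2t)^{d+1}$, extract coefficients of $(1+t)^{d+1}/(1-t)$, and then read off both recursions as formal consequences (Pascal's rule for the first, the symmetry $\sum_{j=0}^{k-1}\binom{2k}{j}+\tfrac12\binom{2k}{k}=2^{2k-1}$ for the second). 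What your route buys is independence from the geometric input $T_{2k}(1)=(2k+1)\binom{2k}{k}$ and a one-line derivation of the closed form; what the paper's route buys is that the recursions are established in the form in which they are actually quoted, with the closed form as a corollary. Your bookkeeping of the range of validity ($i<d/2$ implies $i\le (d-1)/2$, so the closed form applies to $T(d-1,i)$ and $T(d-1,i-1)$) is handled correctly.
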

\begin{proof}
For $0 \le i < \frac{d}{2}$, $T(d,i)$ is equal to the coefficent of $t^i$ in
\[
\sum_{j = 0}^d ( \binom{d}{j} +  \binom{d}{j - 1})2^j  t^j (1 - t)^{d - j}
\]
\[
= (1 - t) \sum_{j = 0}^{d - 1} \binom{d}{j} 2^j  t^j (1 - t)^{d - 1 - j}                   +  2t \sum_{j = 0}^{d - 1}  \binom{d}{j} 2^j  t^{j} (1 - t)^{d - 1 - j}
\]
\[
= (1 + t) \sum_{j = 0}^{d - 1} \binom{d}{j} 2^j  t^j (1 - t)^{d - 1 - j}.                 
\]
Hence $T(d, i) = T(d - 1, i) + T(d - 1, i - 1)$, as desired. It follows that if $d$ is odd, then $T_d(1) = 2T_{d - 1}(
1)$, and if $d = 2k$ is even, then $T_{2k}(1) = 2T_{2k - 1}(1) - 2T(2k - 1, k - 1) + T(2k, k)$. 
On the other hand, 
by the above discussion,  if $d = 2k$, then $T_{2k}(1) = (2k + 1)\binom{2k}{k}$,  
and we deduce the equality
\[
T(2k, k) = 2T(2k - 1, k - 1) + (2k + 1)\binom{2k}{k} - 4(2k - 1)\binom{2k - 2}{k - 1}.
\] 
One verifies that the latter two terms sum to $\binom{2k}{k}$. The final statement now follows by a simple induction argument. 
\end{proof}

Using the above lemma and induction, we deduce the following expressions for the $h^*$-polynomials of the KV-polytopes 
\[
h^*_{V(2k, 2k)}(t) = \sum_{i = 0}^{k} \binom{2k + 1}{i} (t^i + \cdots + t^{2k - i}) = \sum_{i = 0}^{k} \binom{2i}{i} t^i (1 + t)^{2(k - i)}. 
\]

We summarize the results of this discussion in the following proposition.

\begin{proposition}\label{p:centrally}
 The $d$-dimensional, non-singular, reflexive, centrally symmetric lattice polytopes are (up to unimodular transformation) precisely the polytopes  $V(2k,d)$  for $0 \le 2k \le d$, with $h^*$-polynomials 
 \[
 h^*_{V(2k,d)}(t) = \sum_{i = 0}^{k} \binom{2i}{i} (1 + t)^{d - 2i},
 \]
 and 
\[
\phi[t] =  \frac{h^*_{V(2k,d)}(t)  + (1 + t)^d}{2} +  \frac{h^*_{V(2k,d)}(t)  - (1 + t)^d}{2}\chi. 
\]

\end{proposition}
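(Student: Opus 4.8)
The classification of the $d$-dimensional non-singular reflexive centrally symmetric polytopes as the $V(2k,d)$, $0\le 2k\le d$, is exactly the theorem of Klyachko and Voskresenski{\u\i} \cite{KVToroidal} quoted above, so nothing new is required there. The formula for $\phi[t]$ is immediate from \eqref{e:centrally}: multiplying $\phi_i = \tfrac{h_i^* + \binom{d}{i}}{2} + \tfrac{h_i^* - \binom{d}{i}}{2}\chi$ by $t^i$, summing over $i$, and using $\sum_i \binom{d}{i} t^i = (1+t)^d$ produces the displayed expression with $h^*(t) = h^*_{V(2k,d)}(t)$. So the only real content is the closed form for $h^*_{V(2k,d)}(t)$, and the plan is to reduce first to the even-dimensional KV-polytope $V(2k,2k)$ and then to a face count.

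For the reduction I would write $V(2k,d)$ as the iterated free sum of $V(2k,2k)$ with $d-2k$ copies of the interval $[-1,1]$, each reflexive with $h^*$-polynomial $1+t$ and containing the origin in its interior; Braun's theorem (the case of trivial groups in Proposition~\ref{p:Braun}, equivalently the discussion preceding the proposition) then gives $h^*_{V(2k,d)}(t) = (1+t)^{d-2k}\, h^*_{V(2k,2k)}(t)$. Since $V(2k,2k)$ is non-singular and reflexive, the fan $\triangle$ over its faces is complete and smooth, so Remark~\ref{r:smooth} applies and yields
\[
h^*_{V(2k,2k)}(t) = h_{\triangle}(t) = \sum_{i=0}^{2k} f_{i-1}\, t^i (1-t)^{2k-i},
\]
where $f_{i-1}$ denotes the number of $(i-1)$-dimensional faces of $V(2k,2k)$ and $f_{-1} = 1$.

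It then remains to enumerate faces and extract a closed form. The face structure of the KV-polytope is worked out in Section~3 of \cite{CasCentrally}, giving $f_{i-1} = \binom{2k+1}{i}2^i$ for $0\le i\le k$ and $(2k+1)\binom{2k}{k}$ facets; since $V(2k,2k)$ is reflexive, Theorem~\ref{t:basicreflexive} (or Corollary~\ref{c:reflexive}) forces $h^*_{V(2k,2k)}(t)$ to be palindromic of degree $2k$, so it is determined by its first $k+1$ coefficients. Feeding the face numbers into $h_{\triangle}(t)$ identifies those coefficients with the quantities $T(2k,i)$ of Lemma~\ref{l:hyperplane}, which that lemma evaluates as $T(2k,i)=\sum_{j=0}^{i}\binom{2k+1}{j}$; the induction on $k$ indicated just after Lemma~\ref{l:hyperplane} then rewrites the resulting polynomial as $\sum_{i=0}^{k}\binom{2i}{i}t^i(1+t)^{2(k-i)}$, and multiplying by $(1+t)^{d-2k}$ yields the displayed $h^*_{V(2k,d)}(t)$. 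Combined with the first paragraph, this proves the proposition. The only genuinely delicate points are the face enumeration of \cite{CasCentrally} and the bookkeeping needed to reconstruct all $2k+1$ coefficients of $h^*_{V(2k,2k)}(t)$ from the first half using palindromicity; the combinatorial identity converting the partial binomial sums into central binomial coefficients times powers of $(1+t)$ is precisely what Lemma~\ref{l:hyperplane} supplies, so it is not really an obstacle.
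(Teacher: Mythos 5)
Your proof is correct and follows essentially the same route as the paper: the Klyachko--Voskresenski{\u\i} classification, the formula \eqref{e:centrally} for $\phi[t]$, the free-sum reduction to the KV-polytope via Proposition~\ref{p:Braun}, Remark~\ref{r:smooth} combined with Casagrande's face counts, and Lemma~\ref{l:hyperplane} together with palindromicity. One small caveat: multiplying $\sum_{i=0}^{k}\binom{2i}{i}t^{i}(1+t)^{2(k-i)}$ by $(1+t)^{d-2k}$ gives $\sum_{i=0}^{k}\binom{2i}{i}t^{i}(1+t)^{d-2i}$ rather than the displayed $\sum_{i=0}^{k}\binom{2i}{i}(1+t)^{d-2i}$, so the statement of the proposition appears to be missing a factor of $t^{i}$ and your derivation produces the correct version.
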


\section{Open questions and conjectures}\label{s:open}

We end the paper by presenting some open problems and directions for future research. 
 We continue with the notation of Section~\ref{s2}, Section~\ref{s:Ehrhart} and Section~\ref{s:hstar}.

Recall that the power series $\phi[t]$ is \emph{effective} if all the virtual representations $\phi_i$ are effective representations. The main open problem is to characterize when $\phi[t]$ is effective. 
We have seen that if the toric variety $Y$ and ample line bundle $L$ associated to the $G$-invariant lattice polytope $P$ admit a $G$-invariant non-degenerate hypersurface, then $\phi[t]$ is effective, and, in particular, $\phi[t]$ is a polynomial (Theorem~\ref{t:nondeg}). We offer the following conjecture. 



\begin{conjecture}\label{c:big}
With the notation above, the following conditions are equivalent
\begin{itemize}
\item $(Y,L)$ admits a $G$-invariant non-degenerate hypersurface, 
\item $\phi[t]$ is effective,
\item  $\phi[t]$ is a polynomial. 
\end{itemize}

\end{conjecture}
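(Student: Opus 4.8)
The implication (1)$\Rightarrow$(2) is Theorem~\ref{t:nondeg}, and (2)$\Rightarrow$(3) is immediate: if every $\phi_i$ is effective then, evaluating characters at the identity, $\dim\phi_i=h_i^*$, which vanishes for $i$ large, so $\phi_i=0$ for $i$ large and $\phi[t]$ is a polynomial. Thus the whole content of the conjecture is the implication (3)$\Rightarrow$(1): if $\phi[t]$ is a polynomial, then $(Y,L)$ admits a $G$-invariant non-degenerate hypersurface. (One could instead try to split this into (3)$\Rightarrow$(2) followed by (2)$\Rightarrow$(1), but both halves appear equally hard, so the plan is to attack (3)$\Rightarrow$(1) directly.)

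The idea is to localize the problem one face at a time, in the spirit of Remark~\ref{r:bpf} and the remarks following it. A $G$-invariant section $f=\sum_{u\in P\cap M}a_u\chi^u$ with $a_u=a_{u'}$ whenever $u,u'$ lie in a common $G$-orbit has, for each face $Q$, a truncation $f_Q=\sum_{u\in Q\cap M}a_u\chi^u$ whose zero locus is $G_Q$-invariant and, by \eqref{e:nondeg}, smooth in $T$ exactly when $X(f)\cap T_Q$ is smooth in $T_Q$. Hence $X(f)$ is a non-degenerate $G$-invariant hypersurface as soon as, for every face $Q$, the truncation $f_Q$ — regarded inside the linear system $\mathcal{L}_Q$ on $T$ spanned by the orbit sums $\sum_{u\in\mathcal{O}}\chi^u$ over the $G_Q$-orbits $\mathcal{O}\subseteq Q\cap M$ — defines a smooth hypersurface of the torus. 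Since there are finitely many faces and each condition is Zariski-open, a Bertini/genericity argument on $Y$ (exactly as invoked in Remark~\ref{r:bpf}) reduces the conjecture to the purely local assertion: \emph{for every face $Q$, a generic member of $\mathcal{L}_Q$ defines a smooth hypersurface of $T$.}

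The first step is the ``necessity'' half of this local assertion: if for some face $Q$ no member of $\mathcal{L}_Q$ defines a smooth hypersurface, then $\phi[t]$ is not a polynomial. I expect this to follow by sharpening the argument of Lemma~\ref{l:bad}. The point should be that a generic member of an orbit-sum system $\mathcal{L}_Q$ fails to be smooth only for an arithmetic reason: some $g\in G_Q$ has fixed subpolytope $P_g$ (equivalently $Q_g$) of dimension too large relative to its index $\ind(P_g)$, so that $\sum_{m}\chi_{mP}(g)t^m$ acquires a pole at a root of unity $\zeta\ne 1$ that $\det(I-\rho(g)t)$ cannot absorb, which forces $\phi[t](g)$, and hence $\phi[t]$, not to be a polynomial — precisely the obstruction exploited in Lemma~\ref{l:bad}. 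Making ``fails to be smooth only for an arithmetic reason'' precise is the crux of this step, but it feels within reach: one should be able to read the singular locus of a generic orbit-sum hypersurface off the combinatorics of the $G_Q$-action on $Q\cap M$ and match the obstruction to a non-lattice fixed point of some $g\in G_Q$.

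The main obstacle is the converse, the ``sufficiency'' half: \emph{if $\phi[t]$ is a polynomial, then for every face $Q$ a generic member of $\mathcal{L}_Q$ is smooth.} This says that the arithmetic obstruction above is the \emph{only} obstruction — that no ``sporadic'' degeneracy is forced purely by the orbit structure of $G_Q$ on $Q\cap M$. I do not see how to reduce this to anything standard; it may require a Hodge-theoretic input, essentially running the machinery of \cite{YoRepresentations} in reverse — realizing $\phi_{i+1}\cdot\det(\rho)$ as the $i$-th graded piece of the Hodge filtration on the primitive middle cohomology of a hypothetical $G$-invariant non-degenerate hypersurface, and showing that the polynomiality of $\phi[t]$ is exactly the numerical shadow that guarantees such a hypersurface exists, via an obstruction-theoretic or $G$-invariant-discriminant argument for the pair $(Y,L)$. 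In short, the first step should come from a strengthening of Lemma~\ref{l:bad}, while the second is the genuinely hard part and, to my knowledge, remains open.
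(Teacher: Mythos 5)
This statement is an open conjecture in the paper (Conjecture~\ref{c:big}); the paper offers no proof of it, so there is nothing to compare your argument against beyond the partial results the paper itself records. Your treatment of the known implications matches the paper exactly: (1)$\Rightarrow$(2) is Theorem~\ref{t:nondeg} (proved in \cite{YoRepresentations} via Hodge theory), and (2)$\Rightarrow$(3) is the observation, made right after the definition of effectiveness, that an effective $\phi[t]$ is automatically a polynomial --- your justification via $\phi_i(1)=h_i^*$ is a correct way to spell that out. The paper also notes that (3)$\Rightarrow$(2) holds when $\dim P=2$ by Corollary~\ref{c:basic}, which you might have cited as evidence for your claim that splitting (3)$\Rightarrow$(1) into two halves is not obviously easier.

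Your proposed strategy for the missing direction is sensible and consistent with the paper's own reduction in Remark~\ref{r:bpf}: the face-by-face localization is sound (each condition ``$f_Q$ is smooth'' is a nonempty Zariski-open condition on the full invariant system once the generic member of $\mathcal{L}_Q$ is smooth, and finitely many nonempty opens in an affine space intersect), and the idea that Lemma~\ref{l:bad} should be the source of the ``only obstruction is arithmetic'' principle is the natural guess. But you are right to flag that the sufficiency half --- that polynomiality of $\phi[t]$ rules out every degeneracy of the generic orbit-sum hypersurface --- is the genuine content, and nothing in your outline closes that gap. To be clear for the record: this is not a proof, and the conjecture remains open; your write-up is an honest research plan, not a resolution.
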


Observe that the equivalence of the last two conditions in the above conjecture holds in dimension $2$ by Corollary~\ref{c:basic}. 

We have seen that if $P$ is a simplex (Proposition~\ref{p:simplex}), or if $G = \Sym_d$ and $P = [0,1]^d$ (Proposition~\ref{p:cube}), then the representations $\phi_i$ are permutation representations.
 When $\phi[t]$ is effective, the ungraded character
$\phi[1] = \sum_i \phi_i$ has non-negative integer values (Proposition~\ref{p:rational}), and if $G$ is cyclic of prime order, then this guarantees that $\phi[1]$ is a permutation representation. Moreover, Stembridge proved that 
the $\phi[1]$ is a permutation representation in the example of Corollary~\ref{c:Weyl} (Remark~\ref{r:Weyl}).  
 On the other hand, $\phi_1$ need not be a permutation representation, even when $\phi[t]$ is effective (Example~\ref{e:counter}). 

\begin{conjecture}
If $\phi[t]$ is effective, then $\phi[1]$ is a permutation representation. 
\end{conjecture}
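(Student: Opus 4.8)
The plan is to reduce to a geometric statement and then produce an explicit combinatorial $G$-set. First note that the hypothesis already supplies most of the numerical input: if $\phi[t]$ is effective then $\phi[1]=\sum_i\phi_i$ is a genuine character, Proposition~\ref{p:rational} gives $\phi[1](g)=\dim(P_g)!\,\vol(P_g)\,\det(I-\rho(g))_{(M^g)^\perp}/\ind(P_g)\ge 0$, and every irreducible occurs in $\phi[1]$ with non-negative integer multiplicity. So the remaining content is the passage from such a character to an honest permutation character, i.e.\ the production of a finite $G$-set $S$ with $|S^g|=\phi[1](g)$ for all $g\in G$. Using Remark~\ref{r:pyramid} and the identity $\phi_P[t]=\phi_{\Pyr(P)}[t]$, which preserves effectiveness, we may assume from the start that $P$ has a $G$-fixed lattice point (the apex of the pyramid, namely the origin); this is convenient for the toric constructions below, though not essential.

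For the main step I would use the geometric realization underlying Theorem~\ref{t:nondeg}. Granting the first bullet of Conjecture~\ref{c:big} — which, assuming the conjectured equivalence, holds whenever $\phi[t]$ is effective — fix a $G$-invariant non-degenerate hypersurface $X$ with Newton polytope $P$, so that $\phi_{i+1}\cdot\det(\rho)$ is the character of $G$ on the $i^{\mth}$ Hodge-graded piece of the primitive part $V$ of the middle cohomology with compact support of $X$. Summing over $i$ and using $\phi_0=1$ together with the fact that $\det(\rho)$ is $\pm1$-valued (Lemma~\ref{l:determinant}), we obtain
\[
\phi[1] = 1 + \det(\rho)\cdot[V] \quad\text{in } R(G),
\]
so it is enough to show that $\mathbf{1}\oplus\bigl(\det(\rho)\otimes V\bigr)$ is a permutation module. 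Here I would pass to a $G$-equivariant degeneration: choose a $G$-invariant regular subdivision refining a triangulation of $P$, degenerate $X$ inside a semistable toric model, and arrange that the special fibre is a union of pieces whose cohomologies are Tate twists of cohomologies of toric strata, with $G$ permuting the strata. By weight considerations the associated equivariant weight spectral sequence degenerates, and the combinatorial $G$-set of strata — which in the simplex case should reproduce $\BOX(P)\cap M'$ (Proposition~\ref{p:simplex}) and in the Coxeter case Stembridge's model (Remark~\ref{r:Weyl}) — supplies the permutation structure, the $\det(\rho)$-twist being absorbed by the orientation/dualizing data of the toric pieces. A parallel route is the equivariant point-count: compute $\tr(g\mid V)$ for all $g$ by counting $\mathbb{F}_q$-points of twisted forms of $X$, producing polynomials in $q$ whose weight-zero part is manifestly a counting function, hence a permutation character.

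The hard part will be exactly this last point. There is no purely representation-theoretic mechanism promoting an effective character with non-negative integer values to a permutation character (it fails for general such characters), so one genuinely must exhibit $S$, and the obvious candidates are not always available: a $G$-invariant unimodular triangulation of $P$ need not exist even when $\phi[t]$ is effective (compare Example~\ref{e:breakdown}), so one cannot simply take $S$ to be the maximal simplices of such a triangulation. Making the degeneration genuinely $G$-equivariant, controlling the weight filtration $G$-equivariantly, and pinning down the $\det(\rho)$-twist on the nose are the technical crux. I would expect progress case by case first — the conjecture is already known for simplices (Proposition~\ref{p:simplex}), for the hypercube (Proposition~\ref{p:cube}), for the Coxeter fan (Stembridge, Remark~\ref{r:Weyl}), and for $G$ cyclic of prime order, where the Galois action on the nontrivial irreducibles together with $\phi[1]\ge 0$ forces $\phi[1]$ into a sum of trivial and regular representations — with the uniform statement to follow from a careful equivariant version of the degeneration or point-count argument above.
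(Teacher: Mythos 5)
This statement is posed in the paper as an open conjecture; the paper gives no proof of it, only the supporting evidence you also cite (Proposition~\ref{p:simplex}, Proposition~\ref{p:cube}, Stembridge's result in Remark~\ref{r:Weyl}, the value-nonnegativity from Proposition~\ref{p:rational}, and the prime-cyclic case). Your write-up is therefore not in competition with an argument in the paper, and it should be judged on its own terms --- and on those terms it is a research programme, not a proof. The first and most serious gap is that your main step is conditional on the first bullet of Conjecture~\ref{c:big}: you assume that effectiveness of $\phi[t]$ already yields a $G$-invariant non-degenerate hypersurface. That implication is itself open (the paper only proves the converse direction, Theorem~\ref{t:nondeg}), so the argument rests on an unproven conjecture from the outset. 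The reduction $\phi[1]=1+\det(\rho)\cdot[V]$ that you extract from the Hodge-theoretic realization of $\phi_{i+1}\cdot\det(\rho)$ is consistent with the cited result, but it only restates the problem: showing that $1\oplus(\det(\rho)\otimes V)$ is a permutation module is exactly as hard as the original conjecture.

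The second gap is the degeneration step, which is where the actual content would have to live and where nothing is established. You do not construct the $G$-equivariant semistable model, do not verify that its strata can be chosen so that $G$ permutes them and their cohomologies are Tate, do not prove equivariant degeneration of the weight spectral sequence, and do not explain how the $\det(\rho)$ twist is ``absorbed by the orientation/dualizing data'' --- each of these is an assertion, not an argument, and you yourself flag the obstruction that a $G$-invariant unimodular (or even lattice) triangulation need not exist. You are right that no purely character-theoretic mechanism can promote a non-negative-valued effective character to a permutation character, so a genuine construction of the $G$-set is unavoidable; but that construction is precisely what is missing. As it stands the proposal correctly assembles the known special cases and identifies where the difficulty sits, but it does not close the gap between ``effective with non-negative values'' and ``permutation character'' in any new case, so the conjecture remains open after your argument exactly as before it.
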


If $\phi[t]$ is not a polynomial, then one can still define a rational character $\phi[1]$ (Lemma~\ref{l:rational}). We conjecture that $\phi[1]$ in fact takes (non-negative) integer  values (cf. Proposition~\ref{p:rational}). 

\begin{conjecture}\label{c:integer}
For any $g$ in $G$, 
\[
\phi[1](g)
=   \frac{ \dim(P_g)! \vol(P_g) \det(I - \rho(g))_{(M^g)^\perp }}{ \ind(P_g) }
\]
is a non-negative integer. 
\end{conjecture}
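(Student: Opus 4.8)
The plan is to prove the integrality statement of Conjecture~\ref{c:integer} by first reducing it to a purely arithmetic divisibility and then attacking that divisibility prime by prime, keeping the holomorphic Lefschetz interpretation as a structural guide.

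Proposition~\ref{p:rational} already supplies the closed formula and the inequality $\phi[1](g)\ge 0$; moreover $\phi[1](g)$ is visibly a \emph{rational} number, since $\det(I-\rho(g))_{(M^g)^\perp}\in\Z$ (as $\rho(g)$ is integral of finite order, $(1-t)^{\dim P_g}$ divides $\det(I-\rho(g)t)$ in $\Z[t]$ by Gauss's lemma) while the remaining factor is rational. So the content is exactly that this rational number is an integer. To make this concrete I would rewrite, as in the proofs of Lemma~\ref{l:polynomial} and Lemma~\ref{l:bad},
\[
\phi[t](g)=\frac{\det(I-\rho(g)t)_{(M^g)^\perp}\cdot\delta_g(t)}{\Psi(t)^{\dim P_g+1}},\qquad \Psi(t):=1+t+\cdots+t^{e-1},
\]
where $e$ is the denominator of $P_g$ and $\delta_g(t)\in\Z[t]$ is the numerator of the Ehrhart series $\sum_m f_{P_g}(m)t^m=\delta_g(t)/(1-t^e)^{\dim P_g+1}$. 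Since $\Psi(1)=e$ and none of the three polynomials vanishes at $t=1$, Lemma~\ref{l:rational} gives $\phi[1](g)=\det(I-\rho(g))_{(M^g)^\perp}\cdot\delta_g(1)/e^{\dim P_g+1}$. Exactly as in the proof of Proposition~\ref{p:rational}, $\delta_g(1)=\dim(P_g)!\,\vol(P_g)\,e^{\dim P_g+1}/\ind(P_g)$; writing $V$ for the normalized volume of the lattice polytope $eP_g$ (with respect to $\aff(eP_g)\cap M'$), so that $\dim(P_g)!\,\vol(P_g)=V/e^{\dim P_g}$, the conjecture becomes the single divisibility
\[
\ind(P_g)\cdot e^{\dim P_g}\ \big|\ \det(I-\rho(g))_{(M^g)^\perp}\cdot V\qquad\text{for every }g\in G.
\]

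The strategy for this divisibility is to localize at each prime $p\mid e$. The $p$-part of $\det(I-\rho(g))_{(M^g)^\perp}$ is transparent: grouping the non-unit eigenvalues of $\rho(g)$ by order, the primitive $p^k$-th roots of unity ($k\ge 1$) occur in full Galois orbits of size $\varphi(p^k)$, each orbit contributing a factor $\prod_{\zeta}(1-\zeta)=\Phi_{p^k}(1)=p$, while every other eigenvalue contributes a unit at $t=1$. The $p$-parts of $e$ and $\ind(P_g)$ should be read off from the Smith normal form of the inclusion $M^g\oplus\Z\hookrightarrow (M')^g\cap M'$ and from the torsion of the height map on $(M')^g\cap M'$, and the $p$-part of $V$ is a volume invariant of $P_g$. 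The point to establish is that every factor of $p$ entering $e$ is \emph{matched}: it is forced either by a prime-power-order eigenvalue of $\rho(g)$ divisible by $p$, or by a factor $p$ compelled in the normalized volume of the slice $eP_g$ by the $g$-action. As reductions I would first use the pyramid trick $\phi_P[t]=\phi_{\Pyr(P)}[t]$ (Remark~\ref{r:pyramid}) to assume $0$ is a vertex of $P$, and then attempt an induction on $\dim P$ via a $G$-invariant triangulation: although $\phi$ is not additive under subdivision, the error terms live on proper faces and are governed by lower-dimensional instances of the same statement, with the simplex case anchoring the induction, since there $\phi_i$ is an explicit permutation representation (Proposition~\ref{p:simplex}) and $\phi[1](g)$ is manifestly a non-negative integer. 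A parallel, more structural route is to realize $\phi[1](g)$ as a holomorphic Lefschetz number: $\chi_{mP}$ is the $G$-character of $H^0(Y,L^{\otimes m})$, equivariant Serre duality and Corollary~\ref{c:reciprocity} relate $\phi[1]$ to $\chi(Y,L^{\otimes -m})$, and equivariant Hirzebruch--Riemann--Roch (Atiyah--Bott) on a $G$-resolution of $Y$ expresses $\phi[1](g)$ as a sum of local contributions over the components of $Y^g$ which one would want to reassemble into $\sum_i(-1)^i\tr(g\mid H^i(\,\cdot\,))$ for a canonical complex of $G$-sheaves — literally the cohomology of a $G$-invariant non-degenerate hypersurface when $\phi[t]$ is effective (Theorem~\ref{t:nondeg}, \cite{YoRepresentations}). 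Note that $\phi[1]$ itself need \emph{not} lie in $R(G)$ — in Example~\ref{e:breakdown} its expansion in irreducible characters has coefficients in $\tfrac12\Z$ — so one cannot simply invoke algebraic-integrality of character values; only the single combination $\phi[1](g)$ is to be controlled.

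The hard part, in either route, is forging the link between the volume data of $P_g$ and the spectral data of $\rho(g)$: formally these are coupled only through the one polytope $P$ and its $g$-action, and I do not expect the displayed divisibility to fall out of generating-function manipulation alone — it seems to require either the geometric (Lefschetz) input or a genuinely new combinatorial identity for the Ehrhart $h^*$-numerators of slices of a lattice polytope by the fixed subspaces of its finite-order symmetries. On the geometric side, the obstruction is that the natural hypersurface is conjecturally absent in precisely the cases where Conjecture~\ref{c:integer} is open, so one would have to construct an unconditional virtual (motivic) replacement, a problem at least as delicate as Conjecture~\ref{c:big}. I would therefore begin with the prime-by-prime valuation bound together with the pyramid and simplex reductions, as these are the most concrete footholds.
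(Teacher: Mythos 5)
This statement is Conjecture~\ref{c:integer}: the paper does not prove it. What the paper \emph{does} establish is exactly the part you rederive at the outset: the closed formula for $\phi[1](g)$ and its non-negativity are Proposition~\ref{p:rational}, and Remark~\ref{r:rational} observes that integrality follows whenever $\phi[t]$ is a polynomial. The open content of the conjecture is the integrality in general, and your proposal does not close that gap. Your reduction is correct and worth having: writing $\sum_m f_{P_g}(m)t^m=\delta_g(t)/(1-t^e)^{\dim P_g+1}$ and evaluating at $t=1$ (legitimate since $\Psi(1)=e\ne 0$) does give $\delta_g(1)=\dim(P_g)!\,\vol(P_g)\,e^{\dim P_g+1}/\ind(P_g)$, consistent with the computation in the proof of Proposition~\ref{p:rational}, and the conjecture is indeed equivalent to the divisibility $\ind(P_g)\,e^{\dim P_g}\mid \det(I-\rho(g))_{(M^g)^\perp}\cdot V$. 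Your observation that $\phi[1]$ need not lie in $R(G)$ (Example~\ref{e:breakdown} has half-integer multiplicities) is also correct and rules out the cheap argument via algebraic integrality of character values.

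But from that point on, everything is a plan rather than a proof, and you say so yourself. The prime-by-prime analysis computes the $p$-adic valuation of $\det(I-\rho(g))_{(M^g)^\perp}$ correctly (full Galois orbits of eigenvalues, $\Phi_{p^k}(1)=p$, $\Phi_n(1)=1$ otherwise), but the claim that ``every factor of $p$ entering $e$ is matched'' by spectral or volume data is precisely the assertion to be proved, and no mechanism is given that couples the valuation of $V=\dim(P_g)!\,\vol(eP_g)$ to the eigenvalues of $\rho(g)$. The induction via a $G$-invariant triangulation is not viable as stated: $\phi[t]$ is defined from the graded character of the cone over $P$, not additively from a triangulation, and you give no inclusion--exclusion identity controlling the ``error terms on proper faces.'' The Lefschetz/equivariant-Riemann--Roch route founders on the obstruction you name: without a $G$-invariant non-degenerate hypersurface (whose existence is itself the open Conjecture~\ref{c:big}) there is no canonical complex whose alternating trace realizes $\phi[1](g)$. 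So the proposal should be read as a correct restatement of the known parts plus a reformulation of the conjecture as a divisibility, not as a proof of it.
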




We also offer the following conjecture on the appearance of the trivial representation. 

\begin{conjecture}
If $\phi[t]$ is a polynomial and $h^*_i > 0$, then the trivial representation occurs with non-zero multiplicity in $\phi_i$. 
\end{conjecture}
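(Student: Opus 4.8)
The plan is to reduce the statement to a positivity question about a single numerical invariant. Write $\psi_i = \langle \phi_i, 1 \rangle$ for the multiplicity of the trivial representation in $\phi_i$ and $\psi[t] = \langle \phi[t], 1 \rangle = \sum_i \psi_i t^i$; since $\phi[t]$ is assumed to be a polynomial, $\psi[t]$ is a polynomial of degree $s$ by Corollary~\ref{c:reciprocity}. We must show $\psi_i > 0$ whenever $h^*_i = \phi_i(1) > 0$. The two extreme coefficients are already under control: $\phi_0 = 1$ gives $\psi_0 = 1$, and Corollary~\ref{c:reciprocity} identifies $\phi_s$ with the permutation character $\chi^*_{lP}$ on the nonempty set $\Int(lP) \cap M$, so $\psi_s = f^\circ_{P/G}(l) \ge 1$; Corollary~\ref{c:basic} shows $\phi_1$ is effective, and a short argument (summing over the $G$-orbit of a non-vertex lattice point of $P$, or of an interior lattice point) should give $\psi_1 > 0$ once $h^*_1 > 0$. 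It is worth noting at the outset that polynomiality of $\phi[t]$ is genuinely needed: in the non-polynomial case of $B_2$ acting on the unit square (Example~\ref{e:breakdown}), $\phi_1$ is a nontrivial one-dimensional character although $\phi_1(1) = h^*_1 = 1$, so no argument that ignores this hypothesis can succeed.

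The core of the proposed argument is commutative-algebraic. Let $C$ be the cone over $P \times \{1\}$ in $M'_\R$ and $R = \C[C \cap M']$ the associated semigroup ring, graded by $u \colon M' \to \Z$; it is a normal, hence Cohen--Macaulay, domain carrying a $G$-action with $R_m \cong \chi_{mP}$ as $G$-representations (cf. Remark~\ref{r:howe}), so $\sum_m \chi_{mP} t^m$ is the equivariant Hilbert series of $R$. By Hochster--Eagon the invariant subring $R^G$ is again Cohen--Macaulay, with Hilbert series $\sum_m f_{P/G}(m)\, t^m = \langle \sum_m \chi_{mP}\, t^m,\, 1 \rangle$. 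Writing $\frac{1}{(1-t)\det(I - \rho t)} = \sum_k \Sym^k M'_\C\, t^k$ via Lemma~\ref{l:exterior} gives $\chi_{mP} = \sum_{i+k = m} \phi_i \otimes \Sym^k M'_\C$ and hence
\[
f_{P/G}(m) = \sum_{i+k = m} \langle \phi_i, \Sym^k M'_\C \rangle ,
\]
exhibiting $\psi[t]$ as the ``$h^*$-polynomial of $R^G$ relative to the invariant polynomial ring $(\Sym^\bullet M'_\C)^G$''. The idea is then to realize $\psi[t]$, up to a transparent correction, as the numerator of the Hilbert series of $R^G$ with respect to a homogeneous system of parameters --- available because $R^G$ is Cohen--Macaulay --- so that its coefficients are nonnegative, and to propagate strict positivity outward from the endpoints $i=0,s$ across the support of $h^*(t)$ by a $G$-equivariant form of Stanley's monotonicity theorem, comparing $R$ with the subalgebra generated, over the invariant linear forms, by a well-chosen $G$-stable full-dimensional lattice subpolytope of $P$.

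The main obstacle lies exactly in that last comparison: non-equivariantly one simply picks a full-dimensional lattice simplex inside $P$, but in general $P$ contains no $G$-stable full-dimensional lattice subpolytope and $R^G$ need not have a system of parameters in degree one --- whether such structure exists is essentially Conjecture~\ref{c:big}. I would therefore first try to prove the statement assuming (conjecturally automatically) that $(Y,L)$ admits a $G$-invariant non-degenerate hypersurface $X$: by Theorem~\ref{t:nondeg} the representation $\phi_i \cdot \det(\rho)$ is then the $(i-1)$-st graded piece of the Hodge filtration on the primitive middle cohomology with compact support of $X$, a group of dimension $h^*_i$, so $\psi_i$ equals the multiplicity of $\det(\rho)$ in it. One would then identify the $\det(\rho)$-isotypic part of $H^{d-1}_c(X)_{\mathrm{prim}}$ with the cohomology of a non-degenerate hypersurface in a toric variety built from the quotient combinatorics, with the $\det(\rho)$-twist playing the role of ``interior'' as in the $\det(\rho)$-reciprocity of Theorem~\ref{t:combinatorics}, and reduce its nonvanishing in the relevant Hodge degree to the inequality $h^*_i > 0$. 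Making either route unconditional --- extracting the $G$-stable linear system of parameters directly, or discharging the non-degeneracy hypothesis --- is where the genuine difficulty remains, which is why the statement is offered only as a conjecture.
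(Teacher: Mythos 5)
This statement is not proved in the paper: it appears in Section~\ref{s:open} as an open conjecture, and the only evidence the paper offers is that it holds in the examples computed there (for simplices via Proposition~\ref{p:simplex}, and for $\dim P = 2$ via Corollary~\ref{c:reciprocity} and Corollary~\ref{c:basic}). Your proposal is candid about the same thing, so there is no question of your having missed the paper's argument; the question is whether your sketch closes the gap, and it does not. Your correct observations are the two endpoints $\psi_0 = 1$ and $\psi_s = f^\circ_{P/G}(l) \ge 1$, which are exactly the paper's evidence, and the remark that Example~\ref{e:breakdown} (where $\phi_1 = \epsilon\cdot\det(\rho)$ is a nontrivial linear character with $h^*_1 = 1$) shows the polynomiality hypothesis cannot be dropped --- a genuinely useful point that the paper does not make explicit.

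The concrete gaps are these. First, even the case $i = 1$ is not settled by your parenthetical: when $h^*_1 = \#(P\cap M) - d - 1 > 0$ because $P$ has more than $d+1$ vertices, there may be no non-vertex and no interior lattice point to average over (e.g.\ the unit square), so ``summing over the $G$-orbit of a non-vertex lattice point'' has nothing to sum over; what one actually needs is that the number of $G$-orbits of $P \cap M$ exceeds $\dim (M'_\R)^G$, and that inequality is not automatic from effectiveness of $\phi_1$. Second, the Hilbert-series route founders on the displayed identity $f_{P/G}(m) = \sum_{i+k=m}\langle \phi_i, \Sym^k M'_\C\rangle$: this is correct, but precisely because the pairing does not factor as $\langle\phi_i,1\rangle\langle\Sym^k M'_\C,1\rangle$, the polynomial $\psi[t]$ is \emph{not} the numerator of the Hilbert series of $R^G$ over $(\Sym^\bullet M'_\C)^G$, so Hochster--Eagon plus Cohen--Macaulayness gives you nonnegativity of the wrong coefficients; and even the right nonnegativity would fall short of the strict positivity the conjecture demands on the support of $h^*(t)$. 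Third, the Hodge-theoretic route is explicitly conditional on the existence of a $G$-invariant non-degenerate hypersurface, i.e.\ on Conjecture~\ref{c:big}, and the proposed identification of the $\det(\rho)$-isotypic part of the primitive cohomology with the cohomology of a quotient hypersurface is asserted, not constructed. In short, this is a reasonable research plan for an open problem, but each of its load-bearing steps is currently a placeholder.
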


It may also be interesting to consider the above conjecture in the special case when $P$ contains a $G$-fixed lattice point in its interior (in this case, $h_i > 0$ for $0 \le i \le d$). 
The conjecture holds in all examples presented in this paper. 
In particular, it holds when $P$ is a simplex by Proposition~\ref{p:simplex}, and when $\dim P = 2$ by Corollary~\ref{c:reciprocity} and Corollary~\ref{c:basic}.

Lastly, we list a number of possible directions for future research. 

\begin{itemize}

\item Resolve the conjectures and questions above when $G = \Z/2\Z$. 

\item  
Consider the lower degree coefficients    $L_i(m)$ in Section~\ref{s:Ehrhart}. Can one say something about their minimal periods, even in the case when $P$ is a simplex?

\item What can one say about the asymptotic behavior of $\phi_{mP}[t]$ for $m$ sufficiently large and divisible (cf. Corollary~\ref{c:multiple}, \cite{BYoHilbert})? 

\item What can be deduced from the equivariant Riemann-Roch formula \cite[Appendix~I]{GGKMoment}? This was suggested to the author by Roberto Paoletti. 

\item Can one develop a natural equivariant version of weighted Ehrhart theory \cite{YoWeightI}?

\item Describe the equivariant Ehrhart theory of the permutahedron (cf. \cite{PosPermutohedra}). 

\item Compute the representations $\phi[t]$ when $G$ is the full symmetry group of the polytopes appearing in Proposition~\ref{p:centrally}.

\end{itemize}

\excise{
\begin{itemize}
\item What can one say about the lower terms $L_i(m)$ in Section~\ref{s:Ehrhart}? What are the minimal periods? Even in the case when $P$ is a simplex.

\item Conjecture that if $P$ contains a ($G$-fixed ?) point in interior, then $\phi[t]$ is effective and the trivial representation occurs with non-zero multiplicity in $\phi_i$ for $0 \le i \le d$. 

\item If $\phi[t]$ is a polynomial, does the trivial representation always  occur with non-zero multiplicity in $\phi_1$? Need to go to dimension $3$ (degree $2$) to get a counter-example. When $G = 1$, can certainly have $h_1 = 0$. What if $h_1 \ne 0$? What about the other coefficients? Is it true that $h^*_i > 0$ implies that the trivial representation occurs? Not true for $h^*_1$ without polynomial assumption. 

\item Conjecture that the following are equivalent: linear system base point free, non-degenerate invariant hypersurface, $\phi[t]$ effective, $\phi[t]$ polynomial. One direction is done.

\end{itemize}
}

\bibliographystyle{amsplain}
\bibliography{alan}

\end{document}